\newcommand{\set}[1]{\mathchoice%
  {\left\lbrace #1 \right\rbrace}%
  {\lbrace #1 \rbrace}%
  {\lbrace #1 \rbrace}%
  {\lbrace #1 \rbrace}%
}
\newcommand{\setc}[2]{\mathchoice%
  {\left\lbrace #1 \, \middle\vert \, #2 \right\rbrace}%
  {\lbrace #1 \, \vert \, #2 \rbrace}%
  {\lbrace #1 \, \vert \, #2 \rbrace}%
  {\lbrace #1 \, \vert \, #2 \rbrace}%
}
\newcommand{\abs}[1]{\mathchoice%
  {\left\lvert #1 \right\rvert}%
  {\lvert #1 \rvert}%
  {\lvert #1 \rvert}%
  {\lvert #1 \rvert}%
}
\newcommand{\dirsum}{\oplus}                 
\newcommand{\tensor}{\otimes}                
\newcommand{\union}{\cup}                    
\newcommand{\intersect}{\cap}                
\newcommand{\cross}{\times}                  
\newcommand{\isom}{\cong}                    
\newcommand{\numset}[1]{\mathbb{#1}}
\newcommand{\N}{\numset{N}}
\newcommand{\Z}{\numset{Z}}
\newcommand{\R}{\numset{R}}
\newcommand{\F}[1]{\numset{F}_{#1}}
\newcommand{\bdy}{\partial} 
\newcommand{\xistd}{\xi_{\mathrm{std}}}
\DeclareMathOperator{\tb}{tb}
\DeclareMathOperator{\rot}{r}
\DeclareMathOperator{\HF}{HF}
\DeclareMathOperator{\CFL}{CFL}
\DeclareMathOperator{\HFL}{HFL}
\newcommand{\opminus}[1]{#1^-}
\newcommand{\ophat}[1]{\widehat{#1}}
\newcommand{\optilde}[1]{\widetilde{#1}}
\newcommand{\HFh}{\ophat{\HF}}
\newcommand{\CFLh}{\ophat{\CFL}}
\newcommand{\CFLt}{\optilde{\CFL}}
\newcommand{\HFLh}{\ophat{\HFL}}
\newcommand{\HFLt}{\optilde{\HFL}}
\newcommand{\curves}[1]{\boldsymbol{#1}}
\newcommand{\alphas}[1][]{%
  \ifthenelse{\equal{#1}{}}{\curves{\alpha}}{\curves{\alpha^{#1}}}
}
\newcommand{\betas}[1][]{%
  \ifthenelse{\equal{#1}{}}{\curves{\beta}}{\curves{\beta_{#1}}}
}
\newcommand{\gen}[1]{\mathbf{#1}}
\newcommand{\genset}[1]{\mathfrak{#1}}
\newcommand{\x}{\gen{x}}
\newcommand{\y}{\gen{y}}
\DeclareMathOperator{\gr}{gr}
\newcommand{\emptypoly}[1]{#1^\circ}
\newcommand{\grid}{\mathbb{G}}
\DeclareMathOperator{\GC}{GC}
\DeclareMathOperator{\GH}{GH}
\newcommand{\GCm}{\opminus{\GC}}
\newcommand{\GCt}{\optilde{\GC}}
\newcommand{\GHt}{\optilde{\GH}}
\newcommand{\markers}[1]{\mathbb{#1}}
\newcommand{\OO}{\markers{O}}
\newcommand{\XX}{\markers{X}}
\newcommand{\eRect}{\emptypoly{\Rect}}
\newcommand{\ePent}{\emptypoly{\Pent}}
\newcommand*{\Ftwo}{\F{2}}
\DeclareMathOperator{\Int}{Int}
\newcommand*{\mir}[1]{m (#1)}
\newcommand*{\alphastd}{\alpha_{\mathrm{std}}}
\renewcommand*{\tb}{\mathit{tb}}
\renewcommand*{\rot}{\mathit{r}}
\newcommand*{\gCFLh}{\mathrm{g}\CFLh}
\newcommand*{\fGC}{\mathcal{GC}}
\newcommand*{\fGCt}{\optilde{\fGC}}
\newcommand*{\ssconv}{\rightrightarrows}
\newcommand*{\Legm}{\Leg_-}
\newcommand*{\Legp}{\Leg_+}
\newcommand*{\Lag}{L}
\newcommand*{\lambdat}{\optilde{\lambda}}
\newcommand*{\lambdatp}{\lambdat^+}
\newcommand*{\lambdatm}{\lambdat^-}
\newcommand*{\lambdatpm}{\lambdat^\pm}
\newcommand*{\stabmap}{\mathcal{S}}
\newcommand*{\destabmap}{\mathcal{D}}
\newcommand*{\commmap}{\mathcal{C}}
\newcommand*{\pinchmap}{\mathcal{P}}
\newcommand*{\birthmap}{\mathcal{B}}
\renewcommand*{\genset}[1]{S (#1)}
\newcommand*{\disjunion}{\sqcup}
\newcommand*{\AB}{\mathit{AB}}
\newcommand*{\NB}{\mathit{NB}}
\newcommand*{\AN}{\mathit{AN}}
\newcommand*{\NN}{\mathit{NN}}
\newcommand{\J}{\mathcal{J}}
\newcommand{\I}{\mathcal{I}}
\newcommand{\X}{\mathbb{X}}
\newcommand{\G}{\mathbb{G}}
\newcommand{\filt}{\mathcal{F}}
\newcommand{\iso}{\cong}
\newcommand{\diff}{\partial}
\DeclareMathOperator{\grGH}{\widetilde{GH}}
\newcommand{\SG}{S(\G)}
\newcommand{\SGp}{S(\G_+)}
\newcommand{\Leg}{\Lambda}
\newcommand{\xp}{\mathbf{x}^+}
\newcommand{\xm}{\mathbf{x}^-}
\newcommand{\xpm}{\mathbf{x}^\pm}
\newcommand{\lp}{\lambda^+}
\newcommand{\lm}{\lambda^-}
\newcommand{\np}{n^+}
\newcommand{\nm}{n^-}
\newcommand{\npm}{n^\pm}
\newcommand{\std}{\mathrm{std}}
\newcommand{\dmi}{\widetilde{\bdy}_{\OO}}
\newcommand{\dpl}{\widetilde{\bdy}_{\OO}}
\newcommand{\bdt}{\widetilde{\partial}_{\OO}}
\DeclareMathOperator{\Rect}{Rect}
\DeclareMathOperator{\Tri}{Tri}
\DeclareMathOperator{\Pent}{Pent}
\newcommand*{\fC}{\mathcal{C}}
\newcommand*{\cref}{\fullfref}
\newcommand*{\extder}[1]{#1}
\title[Spectral GRID invariants and Lagrangian cobordisms]{Spectral GRID 
  invariants and Lagrangian cobordisms}
\author[M. Jubeir]{Mitchell Jubeir}
\address{Department of Mathematics \\ UC Santa Barbara \\ Santa Barbara, CA 93106}
\email{\href{mailto:mitchell\_jubeir@ucsb.edu}{mitchell\_jubeir@ucsb.edu}}
\author[I. Petkova]{Ina Petkova}
\address{Department of Mathematics \\ Dartmouth College \\ Hanover, NH 03755}
\email{\href{mailto:ina.petkova@dartmouth.edu}{ina.petkova@dartmouth.edu}}
\urladdr{\url{https://math.dartmouth.edu/~ina/}}
\author[N. Schwartz]{Noah Schwartz}
\address{Johns Hopkins University Applied Physics Laboratory \\ Laurel, MD 
  20723}
\email{\href{mailto:Noah.Schwartz@jhuapl.edu}{Noah.Schwartz@jhuapl.edu}}
\author[Z. Winkeler]{Zachary Winkeler}
\address{Department of Mathematical Sciences \\ Smith College \\ Northampton, 
  MA 01063}
\email{\href{mailto:zwinkeler@smith.edu}{zwinkeler@smith.edu}}
\urladdr{\url{https://zach-winkeler.github.io/}}
\author[C.-M. M. Wong]{C.-M. Michael Wong}
\address{Department of Mathematics and Statistics \\ University of Ottawa \\ 
  Ottawa, ON K1N 6N5}
\email{\href{mailto:Mike.Wong@uOttawa.ca}{Mike.Wong@uOttawa.ca}}
\urladdr{\url{https://mysite.science.uottawa.ca/cwong/}}
\begin{document}

\begin{abstract}
  We prove that the filtered GRID invariants of Legendrian links in link Floer 
  homology, and consequently their associated invariants in the spectral 
  sequence, obstruct decomposable Lagrangian cobordisms in the symplectization 
  of the standard contact structure on $\R^3$, strengthening a result by 
  Baldwin, Lidman, and the fifth author.
\end{abstract}

\maketitle

\section{Introduction}\label{sec:intro}

An interesting and difficult problem in contact and symplectic geometry is to 
decide, given contact $3$-manifolds $(Y_-, \xi_-)$ and $(Y_+, \xi_+)$, and two 
Legendrian links
\[
  \Legm \subset (Y_-, \xi_-), \qquad \Legp \subset (Y_+, \xi_+),
\]
whether $\Legm$ and $\Legp$ are related by an exact Lagrangian cobordism in a 
Weinstein cobordism from $(Y_-, \xi_-)$ to $(Y_+, \xi_+)$. Even in the simplest 
case where $Y_\pm = \R^3$ and $\xi_\pm = \xistd$ is the standard contact 
structure given by the kernel of the $1$-form
\[
  \alphastd = \extder{d}z - y \, \extder{d}x,
\]
and the Weinstein cobordism is the symplectization
\[
  (\R_t \cross \R^3, \extder{d} (e^t \alphastd)),
\]
this turns out to be a challenging problem.

Focusing on this case in the present article, we recall that the classical 
invariants---the Thurston--Bennequin and rotation numbers---do give 
obstructions: Chantraine \cite{Cha10:Leg} shows that if there exists an exact 
Lagrangian cobordism $\Lag \colon \Legm \to \Legp$, then
\[
  \tb (\Legp) - \tb (\Legm) = - \chi (L), \qquad \rot (\Legp) = \rot (\Legm).
\]
For instance, since the undestabilizable Legendrian unknot $\Leg_0$ and the 
undestabilizable Legendrian representative $\Leg_1$ of $m (6_2)$ both have $\tb 
= -1$, we know that any exact Lagrangian cobordism from $\Leg_0$ to $\Leg_1$ 
(or from $\Leg_1$ to $\Leg_0$) must be a concordance, but since $m (6_2)$ is 
not smoothly slice (as it has nonzero signature), no such cobordism can 
exist.\footnote{This contrasts with the situation in smooth topology, where a 
  cobordism exists between any pair of links, but the minimum genus of such a 
  cobordism is difficult to determine.} However, the classical invariants do 
not give a complete answer: For example, using the functoriality \cite{EHK16} 
of Legendrian contact homology from symplectic field theory \cite{EGH00, Che02, 
  BC14}, Chantraine \cite{Cha15:NotSym} shows that for the undestabilizable 
representative $\Leg_2$ of $m (9_{46})$, which has $\tb = -1$ and $\rot = 0$, 
there exists a Lagrangian concordance $\Lag \colon \Leg_0 \to \Leg_2$ but not a 
Lagrangian concordance $\Lag' \colon \Leg_2 \to \Leg_0$.\footnote{In 
  particular, this also illustrates the important fact that exact Lagrangian 
  cobordisms are directed.} Thus, an important goal is to develop 
\emph{effective} obstructions, such as Legendrian contact homology as mentioned 
above, that give obstructions beyond the classical invariants.  For more 
examples of obstructions from symplectic field theory, see \cite{ST13, CDGG15, 
  CNS16, Pan17}.

Another source of effective invariants is knot (Heegaard) Floer homology. In 
earlier work, Baldwin, Lidman, and the fifth author \cite{BLW22} prove that 
the so-called \emph{GRID invariants} in knot Floer homology can effectively 
obstruct \emph{decomposable} Lagrangian cobordisms (of any genus), which are 
cobordisms that can be obtained by concatenating elementary cobordisms 
associated to Legendrian isotopies, pinches, and births, as depicted in 
\fullref{fig:leg-moves}.  Decomposable cobordisms are exact, and most known 
connected exact Lagrangian cobordisms between non-empty,\footnote{If the 
  non-empty condition is dropped, there are examples of non-decomposable exact 
  Lagrangian fillings in \cite{Sauv04} and \cite{Cha13:nondec}.} undestabilizable\footnote{A preprint \cite{DG24} has appeared during the review process of this article, in which non-decomposable exact Lagrangian cobordisms are constructed where both ends are stabilized Legendrian knots.}  Legendrian 
links are known to be decomposable \cite{Cha12:dec}; whether all connected 
exact Lagrangian cobordisms between non-empty Legendrian links are decomposable 
remains a major open problem.  If one restricts to cobordisms of genus zero, 
there is also a body of work \cite{BS18, BS21, GJ19} that shows that knot Floer 
homology gives effective obstructions even when the decomposability 
  assumption is weakened (to regularity) or discarded.

The goal of the present article is to extend the result in \cite{BLW22} to 
the context of \emph{filtered} knot Floer chain complexes, which will then 
imply that certain invariants in the associated spectral sequences also provide 
effective obstructions to exact Lagrangian cobordisms. The existence of these 
invariants is known to experts in knot Floer homology, but their definitions 
have not appeared in the literature thus far; in \fullref{ssec:intro-defn} 
below, we first provide the definitions.

\subsection{The spectral GRID invariants}
\label{ssec:intro-defn}

We briefly recall the invariants of Legendrian links in $(\R^3, \xistd)$ 
defined by Ozsv\'ath, Szab\'o, and Thurston \cite{OST08} using the 
combinatorial grid-diagram formulation \cite{MOS09, MOST07} of knot Floer 
homology; for more details, see \fullref{ssec:prelim-grid} below. For ease of 
exposition, we focus on the \emph{tilde} flavor, which computes a stabilized 
version of knot Floer homology. Let $\grid$ be a grid diagram of size $n$ that 
represents a Legendrian link $\Leg \subset (\R^3, \xistd)$; then there are 
\emph{canonical generators} $\xp (\grid)$ and $\xm (\grid)$, which are cycles 
of the \emph{grid chain complex} $\GCt (\grid)$, and thus give rise to homology 
classes
\[
  \lambdatp (\grid), \lambdatm (\grid) \in \GHt (\grid) \isom \HFLh (-S^3, 
  \Leg) \tensor V^{n-\ell},\footnote{A Legendrian link in $(\R^3, \xistd)$ can 
    be naturally viewed as a Legendrian link in the standard contact $S^3$.  We 
    follow \cite{OST08} and view the Legendrian invariants as living in $\HFLh 
    (S^3, \mir{\Leg}) \isom \HFLh (-S^3, \Leg)$.}
\]
where $V$ is a vector space of dimension $2$, and $\ell$ is the number of 
components of $\Leg$. (On the chain level, $\GCt (\grid)$ corresponds to a 
stabilized version of $\gCFLh (-S^3, \Leg)$, whose homology is $\HFLh (-S^3, 
\Leg)$.) Ozsv\'ath, Szab\'o, and Thurston prove that $\lambdatpm$ are preserved 
by the homomorphisms on $\GHt$ induced by Legendrian isotopy, which implies 
that they are Legendrian isotopy invariants; they are referred to as the 
\emph{GRID invariants}. Similar statements hold for the \emph{minus} flavor of 
the theory.

As explained in \cite{MOST07, OSS15}, one may in fact define a filtered 
chain complex $\fGCt (\grid)$ with the \emph{Alexander filtration}, such that 
the associated graded object of $\fGCt (\grid)$ is $\GCt (\grid)$. This 
corresponds to the filtered chain complex $\CFLh$, whose associated graded 
object is $\gCFLh$. The filtered chain homotopy type of $\fGCt (\grid)$ is an 
invariant of the smooth link type of $\Leg$ and the grid size of $\grid$ (which 
controls the amount of stabilization).
More precisely, $\fGCt (\grid)$ is filtered chain homotopy equivalent to $\CFLh (-S^3, \Leg) \otimes V^{n-\ell}$. (This can be seen by extending \cite[Section~5.7]{OSS15} to the filtered context \cite[Section~13.3]{OSS15} and setting all formal variables to zero.) Noting that $\mathrm{H}_* (\CFLh (-S^3, \Leg))$ is simply $\HFh (-S^3)$, we see that
each page of the associated spectral sequence
\begin{equation}
  \label{eq:ss-grid}
  \GHt (\grid) \ssconv \mathrm{H}_* (\fGCt (\grid))
\end{equation}
is isomorphic to each corresponding page of the spectral sequence
\[
  \HFLh (-S^3, \Leg) \otimes V^{n - \ell} \ssconv \HFh (-S^3) \otimes V^{n - 
    \ell},
\]
a stabilized version of the spectral sequence
\[
  \HFLh (-S^3, \Leg) \ssconv \HFh (-S^3),
\]
which is an invariant of the smooth link type of $\Leg$.

The idea of the spectral GRID invariants is to consider the homology classes 
that $\xpm (\grid)$ represent in the spectral sequence in \eqref{eq:ss-grid}.  
These may be defined recursively. For concreteness, let $(E^i, \delta_i)$ be 
the $i^{\text{th}}$ page in the spectral sequence, such that $\GHt (\grid)$ is 
the $E^1$-page, and let $\lambdatp_1 (\grid) = \lambdatp (\grid)$. Suppose that 
$\delta_i (\lambdatp_i (\grid)) = 0$; then we obtain a homology class 
$\lambdatp_{i+1} (\grid) = [\lambdatp_i (\grid)] \in E^{i+1}$. If at any stage 
$\delta_i (\lambdatp_i (\grid)) \neq 0$, then we may instead record the integer 
$i$, which we will call $n^+ (\grid) \in \Z^+$, and the process terminates. Let 
$n^+ (\grid) = \infty$ if $\delta_i (\lambdatp_i) = 0$ for all $i \geq 1$.  
Considering the invariant $\lambdatm (\grid) = [\xm (\grid)]$, we likewise 
obtain invariants $n^- (\grid)$ and $\lambdatm_i (\grid)$. With some work, that 
$n^\pm$ and $\lambdatpm_i$ are indeed invariants of the Legendrian link $\Leg$ 
can be deduced by extending certain proofs in \cite{OST08}, which we will carry 
out later:

\begin{theorem}
  \label{thm:leg}
  Suppose that $\grid$ and $\grid'$ are two grid diagrams that represent the 
  same Legendrian link $\Leg \subset (S^3, \xistd)$. Then $n^+ (\grid) = n^+ 
  (\grid')$ and $n^- (\grid) = n^- (\grid)$, and there exist filtered chain 
  homomorphisms
  \[
    \Phi \colon \fGCt (\grid) \to \fGCt (\grid'), \qquad
    \Phi' \colon \fGCt (\grid') \to \fGCt (\grid),
  \]
  such that the induced maps
  $\Phi_i \colon E^i (\fGCt (\grid)) \to E^i (\fGCt (\grid'))$
  and
  $\Phi'_i \colon E^i (\fGCt (\grid')) \to E^i (\fGCt (\grid))$
  on the pages of the associated spectral sequences satisfy
  \[
    \Phi_i (\lambdatpm_i (\grid)) = \lambdatpm_i (\grid'), \qquad
    \Phi'_i (\lambdatpm_i (\grid)) = \lambdatpm_i (\grid'),
  \]
  for each $1 \leq i \leq n^\pm (\grid)$.
  The fact that $n^\pm (\grid) = n^\pm (\grid')$ means that we have invariants $n^\pm (\Leg) \in \Z^+ \union \set{\infty}$ of the Legendrian link type of $\Leg$.
  Moreover, since the (non)vanishing of $\lambdatpm_i (\grid)$ depends only on $\Leg$, we may view these also as invariants of the Legendrian link type of $\Leg$, and slightly abuse notation to denote them by $\lambdatpm_i (\Leg) \in E^i (\CFLt (-S^3, \Leg))$.
\end{theorem}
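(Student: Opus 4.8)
The plan is to follow the argument of Ozsv\'ath--Szab\'o--Thurston \cite{OST08} for the unfiltered GRID invariants, upgrade each ingredient to the filtered setting, and then extract the page-by-page statements formally. First I would reduce to elementary moves: by the Legendrian grid-move theorem of \cite{OST08}, any two grid diagrams $\grid$ and $\grid'$ that present the same Legendrian link $\Leg$ are connected by a finite sequence of grid moves, each of which is a commutation or a (de)stabilization of one of the types that preserves the Legendrian link type. Since filtered chain maps compose, and the relations $\npm (\grid) = \npm (\grid')$ and ``$\lambdatpm_i (\grid)$ corresponds to $\lambdatpm_i (\grid')$'' are transitive, it suffices to treat the case in which $\grid'$ is obtained from $\grid$ by a single such move.

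Second, for one elementary move I would produce the filtered chain maps. Ozsv\'ath--Szab\'o--Thurston construct, for each such move, chain maps $\Phi \colon \GCt (\grid) \to \GCt (\grid')$ and $\Phi' \colon \GCt (\grid') \to \GCt (\grid)$ --- pentagon counts for commutations, and, for (de)stabilizations, the quasi-isomorphisms built as in \cite{MOS09, OSS15} from the identification of the stabilized grid complex with a mapping cone (equivalently a tensor product with $V$) --- with the property that $\Phi (\xpm (\grid)) = \xpm (\grid')$ and $\Phi' (\xpm (\grid')) = \xpm (\grid)$. The claim I want is that each of these refines to a \emph{filtered} chain map for the Alexander filtration, i.e.\ descends to $\Phi \colon \fGCt (\grid) \to \fGCt (\grid')$. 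For a commutation this amounts to checking that every pentagon contributing to $\Phi$ is non-increasing in Alexander level, which is the filtered refinement of the verification (carried out in \cite{MOST07, OSS15}) that $\Phi$ is a chain map; for a (de)stabilization one checks in addition that the auxiliary identification with the tensor-product complex is filtered and carries $\xpm$ to $\xpm \tensor (\text{canonical generator of } V)$ in a filtration-preserving way. Because $\xpm (\grid)$ is a single generator, lying in a definite filtration level, and is taken to the single generator $\xpm (\grid')$, the identity $\Phi (\xpm (\grid)) = \xpm (\grid')$ already holds at the filtered chain level; likewise for $\Phi'$.

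Third, I would pass to the associated spectral sequences. A filtered chain map induces, for every $i$, a map $\Phi_i \colon E^i (\fGCt (\grid)) \to E^i (\fGCt (\grid'))$ commuting with the differentials $\delta_i$ and sending the $i$th-page class of a surviving cycle to the class of its image. I would then run the recursion on $i$ simultaneously for $\grid$ and $\grid'$: at stage $i$, assuming $\lambdatpm_i (\grid)$ and $\lambdatpm_i (\grid')$ are both defined with $\Phi_i (\lambdatpm_i (\grid)) = \lambdatpm_i (\grid')$ and $\Phi'_i$ matching them in the reverse direction, the identities $\delta_i (\lambdatpm_i (\grid')) = \Phi_i (\delta_i (\lambdatpm_i (\grid)))$ and $\delta_i (\lambdatpm_i (\grid)) = \Phi'_i (\delta_i (\lambdatpm_i (\grid')))$ force $\delta_i$ to vanish on $\lambdatpm_i (\grid)$ exactly when it vanishes on $\lambdatpm_i (\grid')$. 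Hence the process terminates for $\grid$ and for $\grid'$ at the same stage, giving $\npm (\grid) = \npm (\grid')$; when it does not terminate, passing to $E^{i+1}$ propagates the matching of $\Phi_{i+1}$ and $\Phi'_{i+1}$ with $\lambdatpm_{i+1}$. The base case $i = 1$ is exactly the content of the previous paragraph, since $\lambdatpm_1 = [\xpm]$. The final two sentences of the theorem --- viewing $\npm$ and the (non)vanishing of $\lambdatpm_i$ as invariants of $\Leg$ --- are then immediate, and these classes live in $E^i (\CFLt (-S^3, \Leg))$ via the identification $\GHt \isom \HFLh (-S^3, \Leg) \tensor V^{n - \ell}$ already recorded above.

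The formal spectral-sequence bookkeeping and the commutation case are routine, and I expect the main obstacle to be the filtered analysis of the (de)stabilization moves. There the generator sets of $\grid$ and $\grid'$ differ, the homologies differ by a tensor factor of $V$, and the relevant maps are not plain polygon counts but are assembled from the mapping-cone description of the stabilized complex. Verifying that this assembly is compatible with the Alexander filtration --- that the summands of the cone, the homotopy operators entering the identification, and the map carrying $\xpm$ across all behave non-increasingly on filtration levels --- is the delicate step, and is where the bulk of the argument will lie.
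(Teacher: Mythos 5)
Your skeleton matches the paper's: reduce to grid commutations and (de)stabilizations of types \textit{X:SE} and \textit{X:NW}, produce a filtered chain homomorphism for each elementary move, and extract the page-by-page statements via a lemma about filtered chain maps and spectral sequences (\fullref{lem:filt-map-ss-el} in the paper). However, two specific parts of your argument need correction.

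First, your claim that ``the identity $\Phi(\xpm(\grid)) = \xpm(\grid')$ already holds at the filtered chain level'' is false for the pentagon map. The filtered commutation map must count pentagons that avoid $\OO$ but may \emph{contain} $\XX$-markings (otherwise it would not commute with $\bdt$), and from $\xpm(\grid_2)$ there are in general nontrivial such pentagons containing $X$'s; these produce extra terms. What is actually true (and what \fullref{lem:comm} establishes) is $\commmap(\xpm(\grid_2)) = \xpm(\grid_1) + \y$ with $\y$ of strictly lower Alexander filtration level, and \fullref{lem:filt-map-ss-el} is formulated precisely to absorb such lower-order error terms when passing to the spectral sequence. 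Your argument needs this weaker statement and the accompanying lemma rather than the exact equality.

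Second, your treatment of (de)stabilization is genuinely different from the paper's and, as written, leaves a gap. You propose to build the filtered map from the mapping-cone (tensor with $V$) quasi-isomorphism of \cite{OSS15}. The issue, noted explicitly in \fullref{rmk:stab-map-oss}, is that the computation recorded there (\cite[Lemma~14.3.9]{OSS15}) only identifies the image of $\xpm$ \emph{on the associated graded level}, which suffices for $i = 1$ but not for larger $i$. For the recursion past page one you need to control the full filtered image of $\xpm$, and tracking this through the homotopy-operator identification with the tensor-product complex is precisely the step you flag as ``delicate'' without resolving. The paper sidesteps this by abandoning the quasi-isomorphism description entirely: it defines separate stabilization and destabilization maps $\stabmap^{\mathit{oL}}$ and $\destabmap^{\mathit{iL}}$ directly by snail-domain counts (\fullref{lem:stab}, \fullref{lem:destab}), for which the image of $\xpm$ can be read off (exactly $\xpm(\grid_2)$ for destabilization; $\xpm(\grid_1) + \y$ with $\y$ lower for stabilization). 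If you pursue the mapping-cone route, you must actually carry out the filtered image computation, which is additional nontrivial work and is not supplied by the references you cite.
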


\begin{remark}
  \label{rmk:previous-phi}
  Versions of the maps $\Phi$ and $\Phi'$ first appear in \cite[Lemmas~6.5 and 
  6.6]{OST08}, and special cases of \fullref{thm:leg} are spelled out in 
  \cite{NOT08} and \cite{OSS15}.  Specifically, \cite[Theorem~2]{NOT08} 
  distinguishes two Legendrian links $\Leg$ and $\Leg'$ with the same smooth 
  type and classical invariants by showing that $n^+ (\Leg) = 1$ while $n^+ 
  (\Leg') > 1$.
  For technical reasons, our formulation of $\Phi$ and $\Phi'$ differs slightly 
  from \cite{OST08, OSS15} when grid (de)stabilizations are involved, and our 
  language in homological algebra also differs from \cite[Chapter~14]{OSS15}.
\end{remark}

\begin{remark}
  \label{rmk:kmvw}
  The idea to obtain refined information about a contact-geometric invariant in 
  knot Floer homology by considering its homology classes in a spectral 
  sequence is also found in \cite{KMVW19}. However, the filtration (and hence the 
  spectral sequence) that we consider in the present article is distinct from 
  that in \cite{KMVW19}.
\end{remark}

\begin{remark}
  \label{rmk:extensions}
  As shown in \cite{OST08}, the invariant $\lambdatp (\Leg)$ is preserved under 
  negative stabilizations of Legendrian links, implying that it is an invariant 
  of the transverse push-off of $\Leg$. Similarly, $\lambdatp_i$ can also be 
  shown to be transverse invariants by a minor extension of the proof of 
  \fullref{thm:leg}, even though we do not pursue this further in the present 
  article.

  Moreover, there is a version of \fullref{thm:leg} for the \emph{minus} flavor 
  of knot Floer homology, whose proof is also similar. For simplicity, we will 
  work solely with the \emph{tilde} flavor.
\end{remark}

\subsection{Obstructions}
\label{ssec:intro-obstructions}

Baldwin, Lidman, and the fifth author \cite{BLW22} prove that, if there 
exists a decomposable Lagrangian cobordism $L \colon \Legm \to \Legp$, then 
there exists a homomorphism $\Phi_L \colon \HFLt (-S^3, \Legp) \to \HFLt (-S^3, 
\Legm)$ that sends $\lambdatpm (\Legp)$ to $\lambdatpm (\Legm)$. In particular:

\begin{theorem}[{\cite[Theorem~1.2]{BLW22}}]
  \label{thm:BLW22}
  Suppose that $\Leg_-$ and $\Leg_+$ are Legendrian links in $(\R^3, \xistd)$, 
  such that
  \begin{itemize}
    \item $\lambdatp(\Leg_+)=0$ and $\lambdatp(\Leg_-)\neq 0$; or
    \item $\lambdatm(\Leg_+)=0$ and $\lambdatm(\Leg_-)\neq 0$.
  \end{itemize}
  Then there does not exist a decomposable Lagrangian cobordism from $\Leg_-$ 
  to $\Leg_+$.
\end{theorem}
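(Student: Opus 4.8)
The plan is to produce the homomorphism $\Phi_L$ alluded to above by cutting $L$ into elementary pieces. By definition a decomposable Lagrangian cobordism $L \colon \Legm \to \Legp$ is a composition $L = L_N \comp \dots \comp L_1$ in which each $L_j$ is the trace of a Legendrian isotopy, a pinch move, or a birth, as in \fullref{fig:leg-moves}. Thus it suffices to attach to each elementary cobordism $\ell \colon \Leg' \to \Leg''$ a homomorphism $\Phi_\ell \colon \HFLt (-S^3, \Leg'') \to \HFLt (-S^3, \Leg')$ with $\Phi_\ell (\lambdatpm (\Leg'')) = \lambdatpm (\Leg')$; composing these in order gives $\Phi_L \colon \HFLt (-S^3, \Legp) \to \HFLt (-S^3, \Legm)$ with $\Phi_L (\lambdatpm (\Legp)) = \lambdatpm (\Legm)$. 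The theorem then follows by contraposition: if $L$ were a decomposable cobordism with, say, $\lambdatp (\Legp) = 0$ and $\lambdatp (\Legm) \neq 0$, then $\lambdatp (\Legm) = \Phi_L (\lambdatp (\Legp)) = \Phi_L (0) = 0$, a contradiction, and likewise for $\lambdatm$.

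Two of the three types of elementary cobordism are comparatively routine. For a Legendrian isotopy, represent $\Leg'$ and $\Leg''$ by grid diagrams related by commutations, (de)stabilizations, and cyclic permutations; the invariance theorem of Ozsv\'ath--Szab\'o--Thurston \cite{OST08} (the $i = 1$ case of \fullref{thm:leg}) then supplies an isomorphism on $\GHt$, hence on $\HFLt (-S^3, \cdot)$, matching up the canonical classes, which I invert if necessary so that it points in the required direction. For a birth, $\Leg''$ is $\Leg'$ together with a split standard Legendrian unknot $\unknot$ of $\tb = -1$; at the level of grids this is an insertion of a small disjoint block, giving $\GCt (\grid'') \isom \GCt (\grid') \tensor \GCt (\grid_{\unknot})$ with $\xpm (\grid'') = \xpm (\grid') \tensor \xpm (\grid_{\unknot})$. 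Since $\lambdatpm (\unknot)$ is a generator of the relevant one-dimensional space, the functional dual to it induces a homomorphism $\HFLt (-S^3, \Leg'') \to \HFLt (-S^3, \Leg')$ carrying $\lambdatpm (\Leg'')$ to $\lambdatpm (\Leg')$, once one performs a harmless stabilization to align the grid sizes and the associated $V^{n - \ell}$ factors.

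The pinch move is the crux, and it is the step I expect to demand genuine work. A pinch is a local saddle modification of the front, so the plan is to present $\Leg'$ and $\Leg''$ by grid diagrams $\grid'$, $\grid''$ that coincide away from a small region, to write down an explicit combinatorially-defined chain map $\GCt (\grid'') \to \GCt (\grid')$ supported near the pinch --- built in the spirit of the (de)stabilization maps of \cite{OST08}, counting suitable families of embedded rectangles and pentagons --- and to check that it is a chain map and that it sends the canonical generator $\xpm (\grid'')$ to $\xpm (\grid')$ up to terms that vanish in homology. Both checks reduce to a careful local analysis of the domains abutting the new $\OO$- and $\XX$-markers created by the pinch, together with a gluing argument ensuring that domains straying outside the pinch region contribute nothing unexpected. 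Once this local model is in place it interacts with the rest of the diagram exactly as the (de)stabilization maps do in \cite{OST08}; it is the heart of the argument of \cite{BLW22}, and the part that does not follow formally from invariance alone.
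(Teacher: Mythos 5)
Your high-level strategy---cut $L$ into elementary pieces (isotopy, pinch, birth), build a $\lambdatpm$-preserving map $\Phi_\ell$ for each, compose, and conclude by contraposition---is exactly what \cite{BLW22} does (and what this paper does in the filtered refinement), and your isotopy case correctly reduces to the invariance theorem of \cite{OST08}. But the other two elementary moves do not hold up. For the birth, the chain-level isomorphism $\GCt(\grid'') \isom \GCt(\grid') \tensor \GCt(\grid_{\unknot})$ is simply false: if $\grid'$ has grid number $n$, the left-hand side has $(n+2)!$ generators while the right-hand side has $2 \cdot n!$, because grid states on the larger torus include ``mixed'' permutations pairing rows of one block with columns of the other. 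The actual birth map of \cite{BLW22} (see \fullref{lem:birth} for its filtered extension) is $\birthmap = e \circ \psi \circ \Pi$: a projection onto a summand of grid states determined by occupancy of two distinguished intersection points, followed by a rectangle-counting map $\psi$, followed by a renaming bijection $e$. That this is a chain map sending $\xpm(\grid_+)$ to $\xpm(\grid_-)$ requires a genuine combinatorial argument and is not a formal consequence of any tensor decomposition. (One could try a split-link K\"unneth theorem at the homology level, but that would itself have to be proved and shown to respect the canonical classes.)

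More seriously, you explicitly defer the pinch move, calling it the crux and offering only a plan. That plan is in the right spirit, but it is precisely the content that constitutes the proof. In \cite{BLW22} (and \fullref{lem:xswap}, \fullref{lem:oswap} here), the pinch is realized at the grid level by an $X$ swap or an $O$ swap of markers in two adjacent rows, and the map is defined on a combined diagram by counting pentagons (for the $X$ swap) or triangles (for the $O$ swap), in direct analogy with the commutation map of \fullref{lem:comm}. One must then verify the chain-map property by a juxtaposition argument, compute the Maslov and Alexander shifts, and show there is a unique contributing pentagon or triangle carrying $\xpm(\grid_+)$ to $\xpm(\grid_-)$. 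None of this is supplied. As written, the proposal has a genuine gap exactly where you predicted the genuine work would be.
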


As mentioned above, the main goal of this article is to prove the following 
extension of this result:

\begin{theorem}
  \label{thm:cob}
  Suppose that $\Leg_-$ and $\Leg_+$ are Legendrian links in $(\R^3, \xistd)$, 
  such that
  \begin{itemize}
    \item $\np(\Leg_+) > \np(\Leg_-)$; or
    \item $\lambdatp_i(\Leg_+)=0$ and $\lambdatp_i(\Leg_-)\neq 0$ for some $1 
      \leq i \leq \min\{\np(\Leg_-), \np(\Leg_+)\}$; or
    \item $\nm(\Leg_+) > \nm(\Leg_-)$; or
    \item $\lambdatm_i(\Leg_+)=0$ and $\lambdatm_i(\Leg_-)\neq 0$ for some $1 
      \leq i \leq \min\{\np(\Leg_-), \np(\Leg_+)\}$.
  \end{itemize}
  Then there does not exist a decomposable Lagrangian cobordism from $\Leg_-$ 
  to $\Leg_+$.
\end{theorem}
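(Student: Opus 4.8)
The strategy is to reduce \fullref{thm:cob} to a statement about the elementary cobordisms appearing in the decomposition of $L$, exactly as in the proof of \fullref{thm:BLW22} in \cite{BLW22}, but carrying the filtration along. First I would establish the fundamental functoriality input: for each elementary decomposable cobordism $L \colon \Legm \to \Legp$ — a Legendrian isotopy, a pinch move, or a birth — there is a filtered chain map $\fGCt(\grid_+) \to \fGCt(\grid_-)$ (for suitably chosen grid diagrams $\grid_\pm$ representing $\Legpm$) whose induced map on the associated graded, i.e. on $\GCt$, is the corresponding GRID cobordism map from \cite{BLW22}, and which therefore sends the canonical cycle $\xpm(\grid_+)$ to (a cycle homologous to) $\xpm(\grid_-)$, possibly up to the tensor factors $V$. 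For Legendrian isotopies this is essentially \fullref{thm:leg} together with its constituent commutation and (de)stabilization maps; for pinch and birth moves one must revisit the grid-diagrammatic maps of \cite{BLW22} and check they respect the Alexander filtration — which they do, being built from the same pentagon/triangle counts that define the filtered differential.

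Next, from a filtered chain map $f \colon \fGCt(\grid_+) \to \fGCt(\grid_-)$ with $f(\xpm(\grid_+))$ a cycle homologous to $\xpm(\grid_-)$, I would extract the consequences on spectral sequences by a purely homological-algebra argument. A filtered chain map induces maps $f_i \colon E^i(\fGCt(\grid_+)) \to E^i(\fGCt(\grid_-))$ on every page, commuting with the differentials $\delta_i$. Tracking $\lambdatpm_i$ page by page: if $\delta_i(\lambdatpm_i(\grid_+)) \ne 0$ then, since $f_i$ commutes with $\delta_i$ and $f_i(\lambdatpm_i(\grid_+)) = \lambdatpm_i(\grid_-)$, we cannot have $\delta_i(\lambdatpm_i(\grid_-)) = 0$ unless $f_i$ already killed $\delta_i(\lambdatpm_i(\grid_+))$ — the point is that $f_i(\delta_i(\lambdatpm_i(\grid_+))) = \delta_i(\lambdatpm_i(\grid_-))$, so if the left side is (the image of) something nonzero while the right side vanishes, one concludes $\np(\Legp) \le \np(\Legm)$ is impossible in the relevant range; more precisely, a nonzero $\lambdatpm_i(\grid_-)$ maps \emph{onto} from $\lambdatpm_i(\grid_+)$, so $\lambdatpm_i(\grid_+) = 0 \Rightarrow \lambdatpm_i(\grid_-) = 0$, and $\delta_i$ applied to a zero class is zero, which forces the termination index to satisfy $\np(\Legp) \le i$ whenever $\np(\Legm) \le i$. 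Assembling these implications gives exactly the contrapositive of each of the four bulleted hypotheses: $\np(\Legp) > \np(\Legm)$, or $\lambdatp_i(\Legp) = 0 \ne \lambdatp_i(\Legm)$ for some $i \le \min\{\np(\Legm),\np(\Legp)\}$, and likewise for the minus decorations, all obstruct the existence of the filtered map, hence of $L$. The tensor factors $V$ introduced by grid size changes are harmless because $V$ carries the trivial filtration (concentrated in a single filtration level up to shift) and tensoring with it commutes with forming the spectral sequence and with the nonvanishing of $\lambdatpm_i$, precisely as already handled for $\GHt$ in \fullref{thm:leg}.

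Finally I would compose over the decomposition $L = L_k \comp \cdots \comp L_1$: each $L_j$ yields a filtered map $f_j$ in the backwards direction intertwining the $\lambdatpm_i$'s on the appropriate pages (for all $i$ up to the minimum of the two termination indices at that stage), and the composite $f_1 \comp \cdots \comp f_k$ is the desired filtered map $\fGCt(\grid_+) \to \fGCt(\grid_-)$; since termination indices can only stay equal or — across the composite — one cannot have $\np$ strictly increase along the cobordism, and a vanishing class on the positive end forces a vanishing class on the negative end on every page within range, the four conditions are obstructions. The main obstacle I anticipate is the first step: verifying that the birth and pinch cobordism maps from \cite{BLW22}, which were originally defined and analyzed only at the level of the associated-graded complex $\GCt$, genuinely lift to \emph{filtered} chain maps on $\fGCt$ — one must check that the domains counted (empty rectangles/triangles/pentagons avoiding the relevant $\OO$- and $\XX$-markings) never \emph{increase} the Alexander filtration level, so that the map is filtered rather than merely filtered-up-to-error, and that the chain-homotopy relations making them well-defined also respect the filtration. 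This is a careful but routine check of marking bookkeeping in grid diagrams, and it is where the bulk of the technical work will lie, mirroring but extending the filtered analysis already needed for the (de)stabilization maps in \fullref{thm:leg}.
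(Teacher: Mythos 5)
Your plan mirrors the paper's proof quite closely: the paper first proves a weak functoriality statement (\fullref{thm:functoriality}) by constructing, for each elementary piece of the decomposition, a filtered chain map $\fGCt(\grid_+)\to\fGCt(\grid_-)$ sending $\xpm(\grid_+)$ to $\xpm(\grid_-)$ plus strictly lower-filtration terms (\fullref{lem:comm}, \fullref{lem:stab}, \fullref{lem:destab} for isotopies, \fullref{lem:xswap}, \fullref{lem:oswap} for pinches, \fullref{lem:birth} for births), composes, and then deduces \fullref{thm:cob} by the general homological-algebra fact (\fullref{lem:filt-map-ss-el}) that a filtered chain map induces compatible maps on all pages of the spectral sequence. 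Your steps (i)--(iii) are exactly this.

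One technical point where your description of the hard part misses the actual issue: you write that one must verify that the pinch/birth maps of \cite{BLW22}, ``which were originally defined and analyzed only at the level of the associated-graded complex,'' genuinely lift to filtered maps by checking that the domains they count (empty polygons avoiding both $\OO$- and $\XX$-markings) do not increase the Alexander filtration. But a map counting polygons avoiding both $\OO$ and $\XX$ preserves the Alexander level \emph{exactly} by \eqref{eq:alex-filt-rel}, so the filtration bound is not where it can fail. What actually fails is the chain-map identity on $\fGCt$: the filtered differential $\bdt$ counts rectangles that may contain $X$'s, and the BLW22 maps do not anticommute with those extra terms. The fix, carried out in the paper (\fullref{lem:xswap}, \fullref{lem:oswap}, \fullref{lem:birth}), is to \emph{enlarge} each map to count additional domains that are allowed to contain $X$'s (while still blocking $O$'s, except in the birth case where a specified pair $O_2,O_3$ must be contained), and to show that the enlarged maps still satisfy the chain identity, respect the filtration with the stated shifts, and still send $\xpm$ to $\xpm$ modulo lower filtration because the new $X$-containing domains contribute strictly filtration-lowering terms. (For the $O$ swap the extra terms vanish altogether; for the birth move the enlargement changes $p\cap\XX\supseteq\{X_2,X_3\}$ from an equality to an inclusion.) The stabilization and destabilization maps of \fullref{thm:leg} are likewise not the ones of \cite{OST08, OSS15} verbatim but the snail-domain maps of \cite[Chapter~13--14]{OSS15}, adapted; see \fullref{rmk:previous-phi}. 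So the right framing is ``extend the maps to become filtered chain maps and then recheck the images of $\xpm$,'' not ``check that the original maps already respect the filtration.'' Modulo that reorientation, your plan is the paper's.
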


\begin{remark}
  \label{rmk:antecedents}
  Restricting to decomposable Lagrangian concordances (i.e.\ the case that the 
  genus $g$ is zero), the hypotheses involving $\lambdatp_1$ are covered by 
  \cite{BS18, BS21} and \cite[Corollary~1.4]{GJ19}.  In addition, for $g = 
  0$, \cite[Corollary~1.5]{GJ19} covers the hypothesis $\np (\Legp) > \np 
  (\Legm) = 1$.
\end{remark}

Specializing to the case where $\Legm$ is the undestabilizable Legendrian 
unknot, we obtain the following:

\begin{corollary}
  \label{cor:filling}
  Suppose that $\Leg$ is a Legendrian link in $(\R^3, \xistd)$, such that 
  $\lambdatp_i(\Leg) = 0$ for some $1 \leq i \leq \np (\Leg)$, or $\lambdatm_i 
  (\Leg) = 0$ for some $1 \leq i \leq \nm (\Leg)$.  Then there does not exist a 
  decomposable Lagrangian filling of $\Leg$.
\end{corollary}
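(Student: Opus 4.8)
The plan is to derive \fullref{cor:filling} from \fullref{thm:cob} by taking the negative end to be the undestabilizable Legendrian unknot $\Leg_0$. Recall that a Lagrangian filling of $\Leg$ is an exact Lagrangian cobordism from the empty Legendrian link to $\Leg$. If such a filling is decomposable, then, reading its elementary pieces from the bottom, the first piece must be a birth, since a Legendrian isotopy or a pinch has a non-empty incoming end, and a birth creates the undestabilizable Legendrian unknot $\Leg_0$. Concatenating the remaining elementary pieces shows that $\Leg$ admits a decomposable Lagrangian filling if and only if there is a decomposable Lagrangian cobordism $\Lag \colon \Leg_0 \to \Leg$. So it suffices to obstruct the latter.

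The key input about the unknot is that $\np(\Leg_0) = \nm(\Leg_0) = \infty$ and $\lambdatpm_i(\Leg_0) \neq 0$ for every $i \geq 1$. To see this, fix a grid diagram $\grid_0$ of size $n$ representing $\Leg_0$. Its associated graded homology $\GHt(\grid_0) \isom \HFLh(-S^3, \unknot) \tensor V^{n-1}$ has rank $2^{n-1}$, since $\HFLh(-S^3, \unknot) \isom \F{2}$, while the total homology $\mathrm{H}_*(\fGCt(\grid_0))$ corresponds to $\HFh(-S^3)$ with the same stabilization factor and therefore also has rank $2^{n-1}$. Hence the spectral sequence \eqref{eq:ss-grid} for $\grid_0$ collapses at its $E^1$-page, so every $\delta_i$ vanishes; consequently $\npm(\Leg_0) = \infty$, and under the resulting identifications $E^1 \isom E^2 \isom \cdots$ the classes $\lambdatpm_i(\Leg_0)$ all coincide with $\lambdatpm(\Leg_0)$, which is non-zero by \cite{OST08}.

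Granting these facts, the corollary follows at once. Suppose $\lambdatp_i(\Leg) = 0$ for some $1 \leq i \leq \np(\Leg)$ (the case of $\lambdatm$ is identical), and apply \fullref{thm:cob} with $\Legm = \Leg_0$ and $\Legp = \Leg$. Since $\np(\Leg_0) = \infty$, we have $\min \set{\np(\Leg_0), \np(\Leg)} = \np(\Leg)$, so the chosen index $i$ lies in the range demanded by the second bullet of \fullref{thm:cob}, and $\lambdatp_i(\Leg_0) \neq 0$ because $i \leq \np(\Leg_0)$. That bullet therefore rules out a decomposable Lagrangian cobordism $\Leg_0 \to \Leg$, and hence, by the reduction of the first paragraph, $\Leg$ admits no decomposable Lagrangian filling.

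The only step that takes real work is the unknot computation in the second paragraph---in particular, pinning down $\mathrm{H}_*(\fGCt(\grid_0))$ precisely enough to see that the spectral sequence degenerates, so that the GRID invariants persist to every page. Everything else is a formal consequence of \fullref{thm:cob} together with the elementary structure of decomposable fillings, so no further difficulty is anticipated.
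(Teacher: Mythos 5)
Your proof is correct and follows the same route as the paper's: reduce a decomposable filling to a birth followed by a decomposable Lagrangian cobordism $\Leg_0 \to \Leg$, observe $\npm(\Leg_0)=\infty$ and $\lambdatpm_i(\Leg_0)\neq 0$ for all $i$, and apply \fullref{thm:cob}. The paper states the unknot facts without justification; your rank-comparison argument showing the spectral sequence for $\grid_0$ collapses at $E^1$ is a correct and welcome elaboration of that step.
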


\subsection{Functoriality}
\label{ssec:intro-proof}

Analogous to \cite{BLW22}, \fullref{thm:cob} follows from the following theorem, which states that the spectral GRID invariants satisfy a weak functoriality under decomposable Lagrangian cobordisms, in the style of \fullref{thm:leg}.

\begin{theorem}
  \label{thm:functoriality}
  Suppose that $\grid_-$ and $\grid_+$ are two grid diagrams that represent Legendrian links $\Legm$ and $\Legp$ in $(\R^3, \xistd)$ respectively. Suppose that there exists a decomposable Lagrangian cobordism $\Lag$ from $\Legm$ to $\Legp$. Then there exists a filtered chain homomorphism
  \[
    \Psi \colon \fGCt (\grid_+) \to \fGCt (\grid_-) \left\llbracket - \chi (\Lag), \frac{\abs{\Leg_+} - \abs{\Leg_-} - \chi (\Lag)}{2} \right\rrbracket,
  \]
  such that the induced maps
  \(
    \Psi_i \colon E^i (\fGCt (\grid_+)) \to E^i (\fGCt (\grid_-))
  \)
  on the pages of the associated spectral sequences satisfy
  \[
    \Psi_i (\lambdatpm_i (\grid_+)) = \lambdatpm_i (\grid_-),
  \]
  for each $1 \leq i \leq n^\pm (\grid)$. Here, $\abs{\Leg_\pm}$ is the number of components of $\Leg_\pm$.
\end{theorem}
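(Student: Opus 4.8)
The plan is to follow the proof of \fullref{thm:BLW22}, upgrading every step from the fully blocked grid complex $\GCt$ to the Alexander-filtered complex $\fGCt$, and then transporting the conclusion from the $E^1$-page to all later pages via the functoriality of spectral sequences. Since $\Lag$ is decomposable, write $\Lag = \Lag_N \comp \cdots \comp \Lag_1$ as a concatenation of elementary cobordisms $\Lag_k \colon \Leg^{k-1} \to \Leg^{k}$, with $\Leg^0 = \Legm$ and $\Leg^N = \Legp$, each associated to a Legendrian isotopy, a pinch, or a birth (see \fullref{fig:leg-moves}). For each $k$ I will produce a filtered chain homomorphism
\[
  \Psi_k \colon \fGCt(\grid^{k}) \to \fGCt(\grid^{k-1}) \left\llbracket - \chi(\Lag_k), \frac{\abs{\Leg^{k}} - \abs{\Leg^{k-1}} - \chi(\Lag_k)}{2} \right\rrbracket
\]
for one convenient pair of grids $\grid^{k-1}$, $\grid^{k}$ adapted to the move, inducing $\lambdatpm_1 \mapsto \lambdatpm_1$ on the $E^1$-page; by \fullref{thm:leg} this is enough, since any two grids for a fixed Legendrian link are related by filtered chain maps of zero grading shift intertwining all the $\lambdatpm_i$, which may be pre- and post-composed freely. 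Concatenating, $\Psi := \Psi_1 \comp \cdots \comp \Psi_N$ is a filtered chain homomorphism with the target asserted in the theorem, because the Euler characteristic is additive under concatenation of cobordisms and $\sum_k \paren{\abs{\Leg^{k}} - \abs{\Leg^{k-1}}}$ telescopes to $\abs{\Legp} - \abs{\Legm}$.

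The passage to higher pages rests on a standard fact about morphisms of spectral sequences, which I will record as a lemma: if $f \colon (C, \filt) \to (C', \filt)$ is a filtered chain map and $a \in E^1(C)$, $a' \in E^1(C')$ satisfy $f_1(a) = a'$, then whenever the iterated classes $a = a_1, a_2 = [a_1], \dots$ of $a$ are defined through the $E^m$-page (that is, $\delta_j a_j = 0$ for all $j < m$), the iterated classes $a'_j$ of $a'$ are also defined through $E^m$, and $f_j(a_j) = a'_j$ for every $j \le m$; the proof is a one-line induction using $\delta_j f_j = f_j \delta_j$ on each page. Applied to $\Psi$ with $a = \lambdatpm_1(\grid_+)$ and $a' = \lambdatpm_1(\grid_-)$ — the base identity $\Psi_1(\lambdatpm_1(\grid_+)) = \lambdatpm_1(\grid_-)$ being the composite of the per-step $E^1$-statements — this yields $\Psi_i(\lambdatpm_i(\grid_+)) = \lambdatpm_i(\grid_-)$ for all $1 \le i \le n^\pm(\grid_+)$, and in passing $n^\pm(\grid_-) \ge n^\pm(\grid_+)$; this inequality makes the range in the statement unambiguous and is what feeds into \fullref{thm:cob}.

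It remains to produce $\Psi_k$ and check the $E^1$-statement for the three elementary cobordisms. For a Legendrian isotopy, $\Lag_k$ is a cylinder ($\chi = 0$, no change in the number of components), $\Leg^{k-1}$ and $\Leg^{k}$ are Legendrian isotopic, and the map is exactly \fullref{thm:leg}. For a birth, $\Leg^{k} = \Leg^{k-1} \disjunion \Leg_0$ with $\Leg_0$ the undestabilizable Legendrian unknot; taking $\grid^{k}$ to be $\grid^{k-1}$ with a disjoint $2 \times 2$ unknot block appended, the Künneth formula for disjoint unions realizes $\fGCt(\grid^{k})$ as $\fGCt(\grid^{k-1})$ tensored with the (two-dimensional, zero-differential) grid complex of the block, and — as in the birth case of \cite{BLW22} — the birth map $\Psi_k$ is the projection collapsing the block factor, manifestly a filtered chain map carrying $\xpm(\grid^{k})$ to $\xpm(\grid^{k-1})$, with grading shift $\left\llbracket -1, 0 \right\rrbracket$ checked directly. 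The substantive case is the pinch, where $\chi(\Lag_k) = -1$ and $\abs{\Leg^{k}} - \abs{\Leg^{k-1}} = \pm 1$. Here I take the pinch grid move and its map $\pinchmap^\circ \colon \GCt(\grid_+) \to \GCt(\grid_-)$ from \cite{BLW22} — which counts certain empty pentagons disjoint from all $\OO$- and $\XX$-markings — and lift it to $\pinchmap \colon \fGCt(\grid_+) \to \fGCt(\grid_-)$ by counting empty pentagons of the same combinatorial type disjoint from the $\OO$-markings only, a pentagon crossing an $\XX$ recording a drop in Alexander filtration, exactly as for the filtered differential $\bdt$. One then verifies: (i) $\pinchmap$ is filtered with the claimed shift $\left\llbracket 1, \frac{\abs{\Leg^{k}} - \abs{\Leg^{k-1}} + 1}{2} \right\rrbracket$, by a direct computation with the Maslov and Alexander index formulas for pentagons; (ii) the associated graded of $\pinchmap$ is $\pinchmap^\circ$, since keeping only the Alexander-grading-preserving pentagons reinstates precisely the condition of avoiding $\XX$; and (iii) $\bdt \comp \pinchmap + \pinchmap \comp \bdt = 0$. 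Granting (iii), the lemma above together with \cite{BLW22} — which establishes that $\pinchmap^\circ$ carries $\lambdatpm_1(\grid_+)$ to $\lambdatpm_1(\grid_-)$ on $\GHt = E^1$ — completes the $E^1$-statement for the pinch.

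The main obstacle is the \emph{filtered chain-map property} (iii) for the pinch map. One runs the usual boundary-degeneration argument — decomposing each region contributing to $\bdt \comp \pinchmap + \pinchmap \comp \bdt$ as a composite of a rectangle and a pinch pentagon, and showing these composites occur in cancelling pairs over $\Ftwo$ — now allowing the constituent rectangles and pentagons to cross $\XX$-markings. What must be checked carefully is that the involution organizing the cancelling pairs is unaffected by $\XX$-crossings: the two terms of a pair have equal local multiplicities at every $\XX$, hence lie in the same filtration level, so the cancellation already valid in the fully blocked complex of \cite{BLW22} lifts verbatim; and that no composite domain carrying an extra $\OO$ is introduced. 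If, as seems likely, BLW22's pinch move is realized not by a single pentagon count but as a composite of grid (de)stabilizations with one elementary pentagon move, one instead lifts each factor separately, so that (iii) reduces to the chain-map property of a single pentagon map — a local, essentially routine verification. A secondary, purely bookkeeping obstacle is tracking the grid-size normalization (the $V^{n-\ell}$ tensor factors) through the (de)stabilizations so that the grading shifts of the $\Psi_k$ compose to exactly the bigrading shift recorded in \fullref{thm:functoriality}.
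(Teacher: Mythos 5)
Your overall strategy — decompose the cobordism into elementary pieces, build a filtered chain map per piece, concatenate, and transport the statement across pages via functoriality of spectral sequences (your ``lemma'' is the paper's \fullref{lem:filt-map-ss-el}, and the per-page transport in the paper is absorbed into that lemma rather than done by induction on pages) — is the same as the paper's, and your treatment of the pinch via pentagons is essentially the paper's \fullref{lem:xswap}. But two of your elementary steps have genuine problems.

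\textbf{The birth map.} Your Künneth argument is false for grid complexes. If $\grid^{k}$ is $\grid^{k-1}$ with a disjoint $2 \times 2$ block appended (grid size $m+2$ rather than $m$), the generators of $\fGCt(\grid^{k})$ are \emph{all} bijections between $m+2$ horizontal and $m+2$ vertical circles; these do not restrict to the block structure, and rectangles may have vertices mixing the block with the rest of the diagram. Concretely, $\dim \fGCt(\grid^{k}) = (m+2)!$ while $\dim \bigl(\fGCt(\grid^{k-1}) \tensor W\bigr) = 2 \cdot m!$, so there is no tensor factorization even as vector spaces. There is therefore no ``projection collapsing the block factor'' to serve as a chain map. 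The paper's \fullref{lem:birth} instead places the new $2 \times 2$ block adjacent to an existing $O$, decomposes the generator set as $\AB \disjunion \AN \disjunion \NB \disjunion \NN$ according to which of two distinguished intersection points are occupied, and defines $\birthmap = e \comp \psi \comp \Pi$ where $\psi$ counts certain rectangles (allowing $X$-crossings) and $e$ is a generator bijection on a sub-quotient; showing this composite is a filtered chain map is the content of the lemma and is not a formality.

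\textbf{The pinch map.} You treat the pinch as a single pentagon-counting map, but there are two grid realizations of the pinch: an $X$ swap (where the pair of markings that move are $X$'s) and an $O$ swap (where they are $O$'s), corresponding to whether the pinch merges or splits components, and depending on orientations. The $X$ swap is realized by a pentagon map as you describe (\fullref{lem:xswap}), but the $O$ swap is realized by a \emph{triangle} map (\fullref{lem:oswap}); both cases arise and both must be verified to be filtered chain maps with the claimed grading shifts. Your plan omits the triangle case. (Your chain-map verification for pentagons via the cancelling-pair involution is sound and matches the paper's appeal to the argument of \cite[Lemma~3.1]{MOST07}; a similar rectangle/triangle juxtaposition argument is needed for the triangle map, as in \cite[Lemma~3.4]{Won17}.)

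One smaller remark: the paper's pentagon map does \emph{not} send $\xpm(\grid_+)$ to $\xpm(\grid_-)$ exactly, but to $\xpm(\grid_-) + \y$ with $\y$ in strictly lower Alexander filtration. This is harmless precisely because of \fullref{lem:filt-map-ss-el}, which is built to absorb lower-order terms — a feature worth stating explicitly rather than claiming the lift ``carries $\xpm$ to $\xpm$.'' The triangle and birth maps, by contrast, do hit $\xpm(\grid_-)$ on the nose.
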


Throughout the paper, we denote by $\fC \llbracket M, A \rrbracket$ the filtered chain complex obtained from $\fC$ by shifting the homological (Maslov) grading up by $M$ and shifting the (Alexander) filtration level up by $A$; in other words, $\fC \llbracket M, A \rrbracket_{(0, 0)} = \fC_{(-M, -A)}$.

As in \cite{BLW22}, we believe but do not prove $\Psi$ to be the \emph{functorial} map of Zemke \cite{Zem19} associated to a certain decorated link cobordism.

\subsection{Computation}
\label{ssec:intro-computation}

The invariants $\npm$ and $\lambdatpm_i$ can be computed algorithmically and 
directly from the filtered chain complex $\fGCt (\grid)$, without the need to 
compute each page of the spectral sequence.  Using some homological algebra, we 
prove in \fullref{sec:computation} the following proposition:

\begin{proposition}
  \label{prop:intro-computation}
  Let $\grid$ be a grid diagram, and let $A$ be the Alexander filtration level 
  of $\xp (\grid)$.  Then
  \begin{itemize}
    \item $\np (\grid) = i < \infty$ if and only if $i$ is the smallest number 
      such that
      \begin{equation}
        \label{eqn:computation-npm}
        [\widetilde{\bdy} \xp (\grid)] \neq 0 \in \mathrm{H}_* (\filt_{A-1} 
        \fGCt (\grid) / \filt_{A-i-1} \fGCt (\grid));
      \end{equation}
      and
    \item Supposing $i < \np (\grid)$, then $\lambdatp_i (\grid) = 0$ if and 
      only if
      \begin{equation}
        \label{eqn:computation-lambda}
        [\xp (\grid)] = 0 \in \mathrm{H}_* (\filt_{A+i-1} \fGCt (\grid) / 
        \filt_{A-1} \fGCt (\grid)).
      \end{equation}
  \end{itemize}
  (Note that $\widetilde{\bdy}$ is the total differential of $\fGCt (\grid)$.)
  Analogous statements hold for $\nm (\grid)$ and $\lambdatm_i (\grid)$.
\end{proposition}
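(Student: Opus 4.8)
The plan is to unwind the recursive definition of $\np$ and $\lambdatp_i$ from \fullref{ssec:intro-defn} into a statement purely about the filtered complex $\fGCt(\grid)$, using the standard identification of the spectral sequence pages with subquotient homologies. Recall that for a filtered complex $\fC$ with filtration $\filt_\bullet$, the $E^i$-page can be modelled via the groups $Z^i_{p} = \setc{x \in \filt_p \fC}{\bdy x \in \filt_{p-i} \fC}$ and the boundaries coming from $\filt_{p+i}$; concretely, a class on $E^i$ "survives to" $E^{i+1}$ precisely when its differential, computed in $\filt_{p-1}/\filt_{p-i-1}$, vanishes, and the surviving class is zero on $E^i$ precisely when the cycle already bounds modulo lower filtration together with the $(i-1)$-step boundary, i.e.\ lies in the image from $\filt_{p+i-1}/\filt_{p-1}$. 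The key observation is that $\xp(\grid)$ is an honest cycle of the \emph{associated graded} complex $\GCt(\grid)$ — so $\bdt \xp(\grid) = 0$ — but is generally \emph{not} a cycle of $\fGCt(\grid)$; the total differential $\widetilde{\bdy}\xp(\grid)$ is a sum of terms strictly lowering the Alexander level, hence lies in $\filt_{A-1}\fGCt(\grid)$. This is exactly the mechanism by which $\xp$ can fail to survive the spectral sequence.

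First I would set up the algebraic dictionary precisely: fix the Alexander filtration level $A$ of $\xp(\grid)$, and identify $\lambdatp_i(\grid)$ (when defined) with the class of $\xp(\grid)$ in the subquotient homology $\mathrm{H}_*(\filt_A/\filt_{A-i})$ mapped appropriately, tracking that $\widetilde{\bdy}\xp(\grid) \in \filt_{A-1}$. Then the first bullet follows by a direct translation: $\np(\grid) = i$ means $\delta_j$ kills nothing for $j < i$ and $\delta_i([\xp]) \neq 0$; under the dictionary, "$\delta_j([\xp]) = 0$ for $j < i$" says $\widetilde\bdy\xp(\grid)$ becomes a boundary (indeed $0$, since $\widetilde\bdy^2 = 0$ forces it to be exact) in the relevant subquotients $\filt_{A-1}/\filt_{A-j-1}$ for $j < i$, while "$\delta_i([\xp]) \neq 0$" says its class in $\mathrm{H}_*(\filt_{A-1}/\filt_{A-i-1})$ is nonzero — which is precisely \eqref{eqn:computation-npm}, and minimality of $i$ is built in. For the second bullet, assuming $i < \np(\grid)$ so that $\lambdatp_i(\grid)$ is defined, vanishing on the $E^i$-page means $\xp(\grid)$ is a boundary in the appropriate subquotient; chasing through the model, the relevant obstruction lives in $\mathrm{H}_*(\filt_{A+i-1}/\filt_{A-1})$, giving \eqref{eqn:computation-lambda}. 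The statements for $\nm$ and $\lambdatm_i$ are identical with $\xm$ in place of $\xp$ and its Alexander level replacing $A$.

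The main obstacle I anticipate is bookkeeping rather than conceptual: one must pick a concrete model for the spectral sequence (there are several sign/indexing conventions for whether pages are indexed from $E^0$ or $E^1$, and whether the differential drops filtration by $i$ or $i+1$), and verify that with the normalization declared in \fullref{ssec:intro-defn} — where $\GHt(\grid) = E^1$ and $\lambdatp_1 = \lambdatp$ — the shifts come out exactly as $A-1$, $A-i-1$, and $A+i-1$ as written, with no off-by-one error. A secondary subtlety is justifying the replacement of "the differential of $[\xp_j]$ is a boundary" by "$\widetilde\bdy\xp(\grid)$ represents $0$ in the subquotient homology": this uses that $\widetilde\bdy\xp(\grid)$ is already a cycle (as $\widetilde\bdy^2 = 0$) lying in $\filt_{A-1}$, so its \emph{class} in $\mathrm{H}_*(\filt_{A-1}/\filt_{A-i-1})$ is the natural candidate for $\delta_i(\lambdatp_i)$, and one checks this candidate agrees with the genuine spectral-sequence differential up to the identifications. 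Once the model is fixed, both equivalences are essentially formal, and I would present the argument as a short lemma in homological algebra (valid for any filtered complex and any cycle of the associated graded) followed by specialization to $\fGCt(\grid)$, $\xpm(\grid)$.
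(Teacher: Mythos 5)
Your proposal is correct and takes essentially the same approach as the paper: the paper likewise isolates two general homological-algebra lemmas (valid for any filtered complex and any $x$ whose differential drops filtration level), namely that $[x]^r = 0 \in E^r$ iff $[x] = 0 \in \mathrm{H}_*(\filt_{p+r-1}C/\filt_{p-1}C)$ and that $d^r[x]^r = 0$ iff $[\bdy x] = 0 \in \mathrm{H}_*(\filt_{p-1}C/\filt_{p-r-1}C)$, and then specializes to $\xpm(\grid) \in \fGCt(\grid)$. One small wording caveat: $\widetilde{\bdy}^2 = 0$ guarantees that $\widetilde{\bdy}\,\xp(\grid)$ is a \emph{cycle} in the subquotient (so its homology class is well-defined), not that it is exact --- exactness is precisely what each step of the iteration must establish.
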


\begin{remark}
  \label{rmk:diff-from-delta-k}
  \cite[Proposition~1]{NOT08} and \cite[Corollary~1.5]{GJ19} mention homomorphisms 
  $\widehat{\delta}_i \colon \HFLh_d (S^3, L, s) \to \HFLh_d (S^3, L, s-i)$.  
  When $i = 1$, we have that \eqref{eqn:computation-npm} is satisfied if and 
  only if $\widehat{\delta}_i ([\xp (\grid)]) = 0$, since
  \[
    \mathrm{H}_* (\filt_{A-1} \fGCt (\grid) / \filt_{A-2} \fGCt (\grid)) \isom 
    \GHt_{A-1} (\grid).
  \]
  However, for $i > 1$, only one of the two implications holds in general.
\end{remark}

Given \fullref{prop:intro-computation}, the invariants $\npm$ and 
$\lambdatpm_i$ can be computed using a modified version of the ``zigzag'' 
complex in \cite[Section~4]{NOT08}.
Note that, since the Alexander filtration on $\fGCt (\grid)$ is bounded, the spectral sequence collapses in finitely many pages, allowing one to determine when $n^\pm (\grid) = \infty$.
The authors
\cite{JPSWW22:program} have implemented the algorithm described in this subsection in Python.

\subsection{Effectiveness}
\label{ssec:intro-effectiveness}

In \cite{BLW22}, $\lambdatpm$ are shown to be effective in obstructing 
decomposable Lagrangian cobordisms, meaning that they provide obstructions 
beyond the classical invariants.  Precisely, for every $g \geq 0$, there exist 
Legendrian knots $\Legm$ and $\Legp$, such that
\begin{itemize}
  \item $\tb (\Legp) - \tb (\Legm) = 2 g$;
  \item $\rot (\Legp) = \rot (\Legm)$;
  \item There exists a smooth cobordism of genus $g$ between $\Legm$ and 
    $\Legp$; but
  \item $\lambdatp (\Legp) = 0$ and $\lambdatm (\Legm) \neq 0$, implying that 
    there does not exist a decomposable Lagrangian cobordism from $\Legm$ to 
    $\Legp$.
\end{itemize}

Since $\lambdatpm = \lambdatpm_1$, the invariants $\lambdatpm_i$ are indeed 
effective. In view of \fullref{thm:BLW22} and \fullref{rmk:antecedents}, it 
would be interesting to ask:
\begin{itemize}
  \item For $g > 0$, do there exist examples where $\npm$, or $\lambdatpm_i$ 
    with $i \geq 2$, obstruct decomposable Lagrangian cobordisms that 
    $\lambdatpm$ could not?
  \item For $g = 0$, do there exist examples where $\npm$ with $\np (\Legp) 
    \geq 2$, or $\lambdatpm_i$ with $i \geq 2$, obstruct decomposable 
    Lagrangian cobordisms that $\lambdatpm$ could not?
\end{itemize}

While it seems likely that the answer to both questions is in the affirmative,  
the authors have not yet been able to answer the first question. Below, we 
provide an example that answers the second question in the affirmative.

\begin{example}
  \label{eg:m-10-140}
  The pretzel knot $P (-4, -3, 3) = m (10_{140})$ has three undestabilizable 
  Legendrian representatives $\Leg_1$, $-\Leg_1$, and $\Leg_2$, as in the 
  Legendrian knot atlas by Chongchitmate and Ng \cite{CN13atlas}.\footnote{In 
    \cite{CN13atlas}, they are labeled $L_1$, $-L_1$, and $L_2$; we continue to 
    use $\Leg$ for Legendrian links, for consistence. Note also that our 
    $-\Leg_1$ corresponds to $\Leg_1$ in \cite{GJ19}.} Using 
  \cite{JPSWW22:program}, we have computed:
  \begin{align*}
    \lambdatp_1 (\Leg_1) &\neq 0, \qquad
    &\np (\Leg_1) &= 1, \qquad
    &\lambdatm_1 (\Leg_1) &\neq 0, \qquad
    &\nm (\Leg_1) &= \infty;\\
    \lambdatp_1 (-\Leg_1) &\neq 0, \qquad
    &\np (-\Leg_1) &= \infty, \qquad
    &\lambdatm_1 (-\Leg_1) &\neq 0, \qquad
    &\nm (-\Leg_1) &= 1;\\
    \lambdatp_1 (\Leg_2) &\neq 0, \qquad
    &\np (\Leg_2) &= 1, \qquad
    &\lambdatm_1 (\Leg_2) &\neq 0, \qquad
    &\nm (\Leg_2) &= 1.
  \end{align*}
  Golla and Juh\'asz \cite[Proposition~1.6]{GJ19} show that there is no 
  decomposable (in fact, regular\footnote{An exact Lagrangian cobordism $\Lag$ 
    is \emph{regular} if the Liouville vector field is tangent to $\Lag$.}) 
  Lagrangian concordance $\Lag \colon \Leg_2 \to -\Leg_1$, using the fact that 
  $\np (-\Leg_1) > \np (\Leg_2) = 1$.
  This is based on a computation of $\widehat{\delta}_1 (\lambdatp (-\Leg_1)) = 
  0$ and $\widehat{\delta}_1 (\lambdatp (\Leg_2)) \neq 0$ in \cite{NOT08} using 
  the computer program \cite{NOT07:program}.\footnote{Note that $L_1$ and $L_2$ 
    in \cite{NOT08} correspond to $-\Leg_1$ and $\Leg_2$ respectively. There is 
    also a newer program \cite{MQRVW19:program} that provides bug fixes and 
    improvements in computational speed to \cite{NOT07:program}.}
  While not directly stated in \cite{GJ19}, their result on $\np$ also implies 
  that there is no decomposable Lagrangian cobordism $\Lag \colon \Leg_1 \to 
  -\Leg_1$.

  The above is recovered by \fullref{thm:cob}. In addition, by considering 
  $\nm$, \fullref{thm:cob} also implies that there is no decomposable 
  Lagrangian cobordism $\Lag \colon -\Leg_1 \to \Leg_1$ or $\Lag \colon \Leg_2 
  \to \Leg_1$, which was previously unknown.

  Finally, note that none of the obstructions above can be obtained by 
  considering only linearized Legendrian contact homology (LCH), since 
  $\Leg_1$, $-\Leg_1$, and $\Leg_2$ all have the same linearized LCH. This 
  eliminates the most tractable approach to using the Chekanov--Eliashberg DGA 
  from symplectic field theory to obtain an obstruction.
\end{example}

\subsection{Organization}

In \fullref{sec:prelim}, we provide the necessary background on Legendrian 
links, Lagrangian cobordisms, link Floer homology, the GRID invariants, and 
filtered chain complexes and their associated spectral sequences. Next, in 
\fullref{sec:filt-def}, we define the spectral invariants in detail, and 
provide a detailed proof that they are preserved by grid commutation and 
(de)stabilization, which implies that they are Legendrian invariants. In 
\fullref{sec:obstr}, we then show that the spectral invariants are preserved 
under (the reverses of) pinches and births, proving \fullref{thm:cob}.  
Finally, we prove \fullref{prop:intro-computation} in 
\fullref{sec:computation}, establishing an algorithm to compute the spectral 
invariants directly from the filtered chain complex.

\subsection*{Acknowledgments}

This work is the result of the 2022 Summer Hybrid Undergraduate Research (SHUR) 
program at Dartmouth College, and the authors thank Dartmouth for the support.  
IP was partially supported by NSF CAREER Grant DMS-2145090. MW was partially 
supported by NSF Grant DMS-2238131 (previously DMS-2039688). The SHUR program 
was also partially supported by these NSF grants. MW was also partially 
supported by NSERC Discovery Grant RGPIN-2023-05123. Part of the research was 
conducted while MW was at Louisiana State University, and he thanks LSU for the 
support.

\section{Preliminaries}\label{sec:prelim}

\subsection{Legendrian knots and Lagrangian cobordisms}

In this section, we review the basics of Legendrian knots and Lagrangian cobordisms.

Recall that a smooth link $\Leg\in\R^3$ is called \emph{Legendrian} if it is everywhere tangent to the standard contact structure on $\R^3$,
\[
  \xi_{\std}=\ker(\alpha_{\std}),\quad \alpha_{\std}=dz-y\,dx.
\]
Two Legendrian links are Legendrian isotopic if they are isotopic through a family of Legendrian links.

A Legendrian link can be represented by its \emph{front diagram}, or \emph{front projection}, the projection of the link onto the $xz$-plane. In a front diagram, strand crossing information is encoded by the slopes of the strands: strands with lower slope pass over strands with higher slope. See \fullref{fig:4_1} for an example. 

\begin{figure}[ht]
	\labellist
    	\pinlabel {\small{$x$}} at 38 4
   	\pinlabel {\small{$z$}} at 4 38
    	\endlabellist
      	\includegraphics[scale=1]{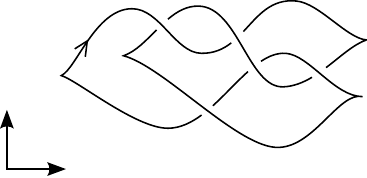}
  	\caption{An example of a front projection.}
    	\label{fig:4_1}
\end{figure}

Two Legendrian front diagrams represent Legendrian-isotopic links if the 
diagrams can be related by a sequence of Legendrian planar isotopies (isotopies 
that preserve left and right cusps and do not introduce vertical tangencies) and Legendrian Reidemeister moves.  
Legendrian Reidemeister moves are the first three diagrams in  
\fullref{fig:leg-moves} and their mirror reflections.

\begin{figure}[ht]
      \includegraphics[scale=1]{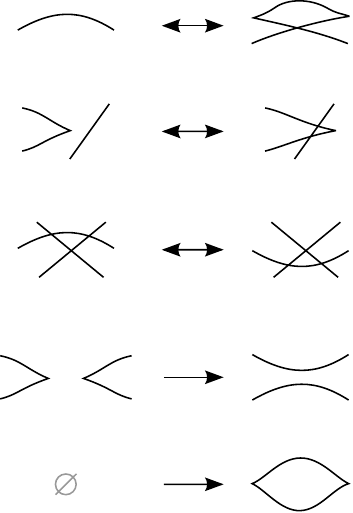}
  \caption{The moves on front projections that correspond to elementary cobordisms. The first three moves are Legendrian Reidemeister moves; they preserve the Legendrian knot type and correspond to Lagrangian cylinders. The fourth move is called a pinch, and the fifth move is called a birth. The vertical and horizontal reflections of the moves are also allowed.}
    \label{fig:leg-moves}
\end{figure}

The two classical Legendrian link invariants are the Thurston-Bennequin number 
$\tb(\Leg)$ and the rotation number $\rot(\Leg)$. These can be computed from an 
oriented front diagram $D$ via the relations
\[
  \tb(\Leg)=\mathrm{wr}(D) - \frac{1}{2} (c_+(D)+c_-(D)), \qquad 
  \rot(\Leg)=\frac{1}{2} (c_-(D)-c_+(D)),
\]
where $\mathrm{wr}(D)$ is the writhe of the diagram, and $c_-(D)$ and $c_+(D)$ 
are the number of downward and upward cusps, respectively.

Two important operations on Legendrian links, which change the Legendrian isotopy class, are positive and negative \emph{Legendrian stabilizations}; see \fullref{fig:Leg-stab}. A link which is not the positive or negative stabilization of another is called \emph{undestabilizable}.
\begin{figure}[ht]
	\labellist
    	\pinlabel $+$ at 78 72
   	\pinlabel $-$ at 78 25
    	\endlabellist
      	\includegraphics[scale=1]{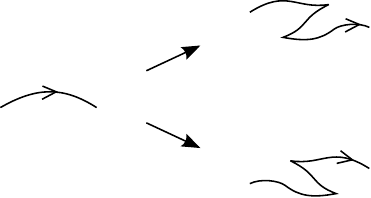}
  	\caption{Positive and negative stabilization of a Legendrian link.}
    	\label{fig:Leg-stab}
\end{figure}

The \emph{symplectization} of $(\R^3, \xistd)$ is the symplectic $4$-manifold
\[
  (\R_t \cross \R^3, d (e^t \alphastd)).
\]
A \emph{Lagrangian cobordism} from $\Legm \subset (\R^3, \xistd)$ to $\Legp 
\subset (\R^3, \xistd)$ is an oriented, embedded surface $L \subset \R_t \cross 
\R^3$ such that
\begin{itemize}
  \item $L$ is Lagrangian, i.e.\ $d (e^t \alphastd) \rvert_L \equiv 0$;
  \item $L$ has cylindrical ends, i.e.\ for some $T > 0$,
    \begin{align*}
      L \cap ((-\infty, -T) \cross \R^3) &= (-\infty, -T) \cross \Legm,\\
      L \cap ((T, \infty) \cross \R^3) &= (T, \infty) \cross \Legp,
    \end{align*}
    and $L \cap ([-T, T] \cross \R^3)$ is compact.
\end{itemize}
A Lagrangian cobordism is \emph{exact} if there exists a function $f \colon L 
\to \R$ that is constant (and not just locally constant) on each of the two 
cylindrical ends, and satisfies
\[
  (e^t \alphastd) \rvert_L = df.
\]
A \emph{Lagrangian concordance} is a Lagrangian cobordism of genus zero, which 
is automatically exact.

If there exists a Lagrangian cobordism $L$ from $\Leg_-$ to $\Leg_+$, 
Chantraine \cite{Cha10:Leg} proves that the classical invariants of the two 
links are related by
\[tb(\Leg_+)-tb(\Leg_-)=-\chi(L) \qquad \text{and} \qquad r(\Leg_+)-r(\Leg_-)=0,\]
where $\chi(L)$ is the Euler characteristic of $L$. This immediately implies 
that Lagrangian cobordism are not an equivalence relation. In fact, even 
Lagrangian concordance is not an equivalence relation \cite{Cha15:NotSym}.

One important subclass of exact Lagrangian cobordisms is the class of 
decomposable Lagrangian cobordisms. Precisely, refer to 
\fullref{fig:leg-moves}: If $\Legm$ and $\Legp$ are Legendrian links such that
\begin{itemize}
  \item $\Legm$ and $\Legp$ are Legendrian isotopic, as in the first three 
    diagrams;
  \item $\Legp$ is obtained from $\Legm$ by a pinch move, as in the fourth 
    diagram;\footnote{Note that, despite the terminology, it is in fact $\Legm$ that looks like it is obtained from $\Legp$ by a pinch.} or
  \item $\Legp$ is obtained from $\Legm$ by a Legendrian birth, i.e.\ $\Legp$ 
    is the disjoint union of $\Legm$ with an unlinked component that is the 
    undestabilizable Legendrian unknot, as in the fifth diagram;
\end{itemize}
then there exists an \emph{elementary} exact Lagrangian cobordism $L \colon 
\Legm \to \Legp$, by work of Bourgeois, Sabloff, Traynor \cite{BST15}, 
Chantraine \cite{Cha10:Leg}, Dimitroglou Rizell \cite{Dim16}, and Ekholm, 
Honda, and K\'alm\'an \cite{EHK16}. Note that, topologically, elementary exact 
Lagrangian cobordisms are annuli, saddles, and cups, respectively. A Lagrangian 
cobordism is \emph{decomposable} if it is isotopic through exact Lagrangian 
cobordisms to a composition of elementary exact Lagrangian cobordisms. In the 
smooth category, every link cobordism is decomposable into elementary 
cobordisms; whether every exact Lagrangian cobordism is decomposable remains a 
major open question.

\subsection{Knot Floer homology and the GRID invariants}
\label{ssec:prelim-grid}

In this section, we review some basics of grid homology, following the 
conventions in \cite{OSS15}.

A \emph{grid diagram} (or simply a \emph{grid}) $\G$ is an $m\times m$ grid on the plane, along with two sets of markers
\[\OO = \{O_1, \ldots, O_m\}, \qquad \X = \{X_1, \ldots, X_m\},\]
such that there is exactly one $O$ and exactly one $X$ in each row, as well as in each column, and no square of the grid contains more than one marking. The number $m$ is called the \emph{grid number} of $\G$.

A grid diagram $\G$ specifies a link $L\subset \R^3$ as follows. Draw oriented segments connecting $X$'s to $O$'s in each column, and $O$'s to $X$'s in each row, and require that vertical segments cross above horizontal ones. We say \emph{$\G$ is a grid diagram for $L$}. Conversely, every link $L$ in $\R^3$ can be represented by a grid diagram.
By a theorem of Cromwell \cite{Cro95}, two grid diagrams represent the same link if and only if they are related by a sequence of commutations, in which two adjacent rows or columns are switched if the corresponding segments in them connecting the $X$'s and the $O$'s are either nested or disjoint, stabilizations, in which a $1 \times 1$ square with an $O$ (resp.\ $X$) marker is replaced by a $2 \times 2$ square with two diagonal $O$ markers and an $X$ marker (resp.\ two diagonal $X$ markers and an $O$ marker), creating a new row and a new column, and destabilizations, the inverse operations. Following \cite{OSS15}, we classify (de)stabilizations by the marker type and the location of the empty cell in the $2 \times 2$ square; for example, a stabilization of type \textit{X:SE} results in a $2 \times 2$ square with an empty southeastern cell, an  $O$ in the northwestern cell, and $X$'s in the northeastern and southwestern cells.

To a grid diagram $\G$, we associate a graded, filtered chain complex 
$\fGCt(\G)$ over $\Ftwo = \Z/2$ whose filtered chain homotopy type is an 
invariant of the isotopy type of $L$. Before we do this, we introduce a bit 
more notation. First, we will think of a grid diagram as a diagram on a torus, 
by identifying the left and right edges, as well as the top and bottom edges of 
the grid. The horizontal arcs of the grid result in a set of circles $\alphas = 
\{\alpha_1, \ldots ,\alpha_m\}$, indexed from bottom to top,  and the vertical 
ones result in a set of circles $\betas = \{\beta_1, \ldots, \beta_m\}$, 
indexed from left to right.\footnote{In \cite{BLW22}, the $\alpha$-circles are 
  the vertical ones, and $\beta$-circles the horizontal ones. Here, we instead 
  follow the convention of \cite{OSS15}. However, in later sections, we opt to 
  follow \cite{BLW22} in drawing multidiagrams (e.g.\ \fullref{fig:comm} and 
  \fullref{fig:combo}) with multiple horizontal curves; as a result, they have 
  multiple $\alpha$-curves rather than multiple $\beta$-curves.} 

As an $\Ftwo$-module, $\fGCt(\G)$ is generated by \emph{grid states}, i.e.\ 
bijections between horizontal and vertical circles. Geometrically, a grid state 
is an $m$-tuple of points $\x = \{x_1, \ldots, x_m\}$ with one point on each 
horizontal circle and one on each vertical circle. The set of grid states for a 
grid diagram $\G$ is denoted by $S(\G)$. 

Before we define the differential, we first define a partial ordering of points in $\mathbb{R}^2$ given by $(x_1,y_1)<(x_2,y_2)$ if $x_1<x_2$ and $y_1<y_2$. For any two sets $P,Q \subset \mathbb{R}^2$, define \[\mathcal{I}(P,Q) = \#\setc{(p,q) \in P\times Q}{p<q}.\] Next, we define the symmetrized function, \[\J(P,Q)=\frac{\I(P,Q)+\I(Q,P)}{2}.\] 

For any state $\x \in S(\G)$, we can define it uniquely as integer points in $[0,m) \times [0,m) \subset \mathbb{R}^2$. Then, representing $\X$ and 
$\OO$ as half-integer points in the same subset, we can define the 
Maslov and Alexander functions $M (\x)$ and $A(\x)$ as follows:
\begin{align*}
    M (\x) = M_{\OO}(\x) & =\J(\x,\x)-2\J(\x,\OO)+\J(\OO,\OO)+1, \\
    M_{\X}(\x) & =\J(\x,\x)-2\J(\x,\X)+\J(\X,\X)+1, \\
    A(\x)&=\frac{1}{2}\Big(M_\OO(\x)-M_\X(\x)\Big)-\frac{m-l}{2},
\end{align*}
where $l$ is the number of link components in the Legendrian link corresponding 
to the grid $\G$.

Given two grid states $\x,\y \in \SG$, let $\Rect(\x, \y)$ denote the space of rectangles embedded in the torus with the following properties. First, $\Rect (\x, \y)$ is empty if $\x$ and $\y$ do not agree at exactly $m - 2$ points. An element $r \in \Rect (\x, \y)$ is an embedded rectangle with right angles, such that:
\begin{itemize}
\item $\bdy r$ lies on the union of horizontal and vertical circles;
\item The vertices of $r$ are exactly the points in $\x \triangle \y$, where $\triangle$ denotes the symmetric difference; and
\item $\bdy (\bdy r \intersect \betas)= \x-\y$, in the orientation induced by $r$.
\end{itemize}
Given $r\in\Rect(\x, \y)$, we say that \emph{$r$ goes from $\x$ to $\y$.} Observe that $\Rect(\x, \y)$  consists of either zero or two rectangles. We say a rectangle $r\in \Rect(\x, \y)$ is \emph{empty} if $\x\cap \Int(r) = \y\cap \Int(r) = \emptyset$. We denote the set of empty rectangles from $\x$ to $\y$ by $\eRect(\x, \y)$.

For any two states $\x,\y$ with a rectangle $r\in \Rect(\x,\y)$, one could 
compute that
\begin{align}
  M (\x) - M (\y) &= 1 - 2 \#(r \cap \OO)+ 2 \# (\Int(r) \cap \x), \label{eq:maslov-gr-rel}\\
  A (\x) - A (\y) &= \# (r \cap \XX) - \# (r \cap \OO). \label{eq:alex-filt-rel}
\end{align}

The differential on $\fGCt(\G)$ is defined on generators by 
\[\bdt(\x) = \sum_{\y\in \SG}\sum_{\substack{r\in \eRect(\x, \y)\\ r\cap \OO = \emptyset}} \y.\]
Using \eqref{eq:maslov-gr-rel} and \eqref{eq:alex-filt-rel}, we examine how the 
differential interacts with the Maslov and Alexander functions. First, note 
that $M(\x)-M(\y)=1$ for each $\y$ in the summation, which implies that $\bdt$ 
drops the Maslov grading by 1. Second, we see that $A(\x)-A(\y)\geq 0$ for all 
$\y$ in the summation.  Thus, $M$ induces a $\Z$-grading on $\fGCt (\grid)$, 
and $A$ induces a $\Z$-filtration.
We discuss filtered complexes further in \fullref{ssec:filt}.

The homology $\grGH(\G) = H_*(\gr(\fGCt(\G)))$ of the associated graded object is (almost) an invariant of the underlying link, in the following sense. If $\G$ is a grid of size $m$ for an $l$-component link $L$, then we have 
\begin{equation*}
    \GHt(\G) \cong \HFLh(L)\otimes W^{\otimes (m-l)} \, ,
\end{equation*}
where $W$ is a two-dimensional bigraded vector space with one generator in bigrading $(0,0)$ and another in bigrading $(-1,-1)$. Alternatively, a combinatorial proof of the invariance of $\GHt$ that does not appeal to holomorphic Heegaard Floer theory is given by defining combinatorial filtered chain homotopy equivalences between $\fGCt (\grid_1)$ and $\fGCt (\grid_2)$ when $\grid_1$ and $\grid_2$ differ by a commutation or (de)stabilization.

Given a planar grid $\G$, we construct a Legendrian link in the following 
manner. First, create the oriented link specified by  $\G$.  The 
projection of this link onto the grid has corners that can be classified into 
four types: northeast, northwest, southwest, and southeast. First, we smooth 
all of the northwest and southeast corners of the projection, and turn the 
northeast and southwest corners into cusps. Next, we rotate the diagram $45$ 
degrees clockwise. Now, since all the vertical strands cross over the 
horizontal ones and this convention is opposite to the convention for 
Legendrian front projections, we now flip all the crossings.
This gives a Legendrian link $\Leg (\grid)$ whose smooth type is the mirror of the smooth link associated to $\grid$. Similar to the smooth case, every Legendrian link in $(\R^3, \xistd)$ can be represented by a grid diagram. Two grid diagrams represent the same Legendrian link if and only if they are related by a sequence of commutations and (de)stabilizations of type \textit{X:SE} and \textit{X:NW} \cite[Proposition 12.2.6]{OSS15}.

Given a grid diagram $\G$, the generator $\xp (\grid) \in \SG$ is the grid state composed of all the points directly northeast of the $X$'s. Similarly, we define $\xm (\grid) \in \SG$ to be the grid state composed of all points directly southwest of the $X$'s.
Then, for a grid diagram $\G$ of a Legendrian link $\Leg$ of $l$ components, one can compute the gradings of the generators $\xpm$ to be
\begin{equation}
\label{eq:xpm grad}
    \begin{split}
        M(\xpm (\grid))&= tb(\Leg) \mp r(\Leg) + 1 \\
        A(\xpm (\grid))&= \frac{tb(\Leg) \mp r(\Leg) + l}{2}.
    \end{split}
\end{equation}
In \cite{OST08}, both $\xp (\grid)$ and $\xm (\grid)$ are shown to be cycles in the associated graded object $\gr (\fGCt (\grid)) = \GCt (\grid)$; moreover, these cycles are preserved by the filtered chain homotopy equivalences associated to commutations and (de)stabilizations of type \textit{X:SE} and \textit{X:NW}, which shows that their homology classes are invariants of the Legendrian link $\Leg$.

\subsection{Filtered chain complexes}\label{ssec:filt}
The main algebraic structures that we study in this paper are filtered chain complexes, and the spectral sequences they induce.

A \emph{filtration} on a chain complex $C$ is a sequence of subcomplexes $(\filt_i C)_{i \in \Z}$ of $C$ such that $\filt_i C \subset \filt_j C$ whenever $i \le j \in Z$. To be more precise, this is the definition of an ``increasing'' filtration. We will assume that our filtrations are \emph{bounded}, which means that $\filt_s C = 0$ and $\filt_t C = C$ for some $s \le t \in \Z$. A \emph{filtered complex} $\fC = (C, \filt)$ is a complex $C$ equipped with a filtration $\filt$ of $C$. When the filtration is clear from context, we will often omit it.

A \emph{map of filtered complexes} is a chain map $f: C \to C'$ that respects the filtration in the sense that $f(\filt_i C) \subset \filt_i C'$.

Given a filtered complex $\fC$, we may construct the \emph{associated graded complex} $\gr(\fC)$, which is defined to be
\begin{equation*}
    \gr(\fC) = \bigoplus_{p \in \Z} \gr_p(\fC)
\end{equation*}
where
\begin{equation*}
    \gr_p(\fC) = \filt_p C / \filt_{p-1} C \,.
\end{equation*}
A map $f \colon \fC \to \fC'$ of filtered complexes induces a map $\gr(f) \colon \gr(\fC) \to \gr(\fC')$ of associated graded complexes in a natural way.

While the underlying modules of filtered complexes and their associated graded 
complexes are isomorphic (over a field), the same is not true in general when 
one considers their differentials. For example, it is not true in general that 
$H_*(\gr(\fC)) \iso H_*(C)$. Instead, what we can say is that there is a 
spectral sequence from the former group to the latter. A \emph{spectral 
  sequence} is a sequence of chain complexes $(E^r)_{r \in \N \union 
  \{\infty\}}$ and isomorphisms $H_*(E^r) \iso E^{r+1}$ for all $r \in \N 
\union \{\infty\}$. The complex $E^r$ comes equipped with a bigrading $E^r = 
\bigoplus_{p,q} E^r_{p,q}$.

Given a filtered complex $\fC$, there is an induced spectral sequence with 
$E^1_{p,q} \iso H_{p+q}(\gr_p(\fC))$ and $E^\infty_{p,q} \iso 
\gr_p(H_{p+q}(C))$. We will summarize the relevant details, using the 
conventions from \cite[\href{https://stacks.math.columbia.edu/tag/012K}{Section 
  012K}]{Sta22} adjusted for homological (vs.\ cohomological) gradings. The 
filtration $\filt$ on $C$ induces a grading on each page $E^r$ of the spectral 
sequence as a module; this is the grading corresponding to $p$ in $E^r_{p,q}$.  
Similarly, $q$ corresponds to the original homological grading on $C$; we will 
often suppress this grading for simplicity. As modules, we define
\begin{equation*}
    E^r_p (\fC) \iso \frac{Z^r_p (\fC)}{B^r_p (\fC)} \,,
\end{equation*}
where
\begin{align*}
    Z^r_p (\fC) \iso \frac{\filt_p C \cap \diff^{-1}(\filt_{p-r} C) + 
      \filt_{p-1}C}{\filt_{p-1} C} \quad \text{ and }
    \quad
    B^r_p (\fC) \iso \frac{\filt_p C \cap \diff(\filt_{p+r-1} C) + \filt_{p-1} 
      C}{\filt_{p-1} C} \,.
\end{align*}
The differential $d^r: E^r_p \to E^r_{p-r}$ is induced by $\diff$, where 
  $E^r$ is understood as a quotient of subquotients of $C$ as in the definition 
  above.  
The only part of the spectral sequence structure induced by $\filt$ that we 
have not defined is the isomorphism $H_*(E^r) \iso E^{r+1}$; this is mostly 
tedious but straightforward algebra.  For more details, we refer the reader to 
\cite{Wei94} or another book on homological algebra.

Since we are interested in the behavior of particular elements under spectral 
sequences, we want to define what it means to talk about the ``class of $x \in 
C$ on the $r$-th page''. Given any nonzero $x \in C$, let $p$ be the value  for which $x \in \filt_p C \setminus \filt_{p-1} C$ (note that this is 
the difference as sets and not the quotient). We may think of $p$ as the 
``filtration level'' of $x$; such an integer always exists since $\filt$ is 
assumed to be bounded. Then, if $x \in \filt_p C \cap \diff^{-1}(\filt_{p-r} 
C) + \filt_{p-1} C$, we define $[x]^r$ to be the class of $x$ in $E^r_p$.  
Note that, given some $x \in \filt_p C$, it may not be the case that $[x]^r$ 
is defined for all $r$. However, if $[x]^r$ is defined, then $[x]^s$ is also 
defined for all $s \le r$, since $\diff^{-1}(\filt_{p-s} C) \supseteq 
\diff^{-1}(\filt_{p-r} C)$. In fact, $[x]^{r+1} \in E^{r+1}_p$ is defined if and only if $d^r[x]^r = 0$, since $\ker d^r_p = \diff^{-1}(B^r_{p-r}) \cap Z^r_p \iso 
Z^{r+1}_p$. In \fullref{sec:computation}, we will introduce an alternative way 
of thinking about the (non)vanishing of the class of an element in $E^r$ 
that lends itself nicely to certain computations.

In later sections, our strategy to relate elements in the spectral sequences associated to two filtered complexes will be to relate representatives of those elements in the filtered chain complexes. Thus, we need the following lemma, which explains how a filtered chain map induces maps on the spectral sequence:

\begin{lemma}
  \label{lem:filt-map-ss-el}
  Let $\fC = (C, \filt)$ and $\fC' = (C', \filt')$ be two filtered chain 
  complexes, and let $f \colon \fC \to \fC'$ be a filtered chain map. Then for 
  each $r \geq 1$, the map $f$ induces a chain map
  \[
    E^r (f) \colon E^r (\fC) \to E^r (\fC').
  \]
  Furthermore, if $x \in \filt_p C$ is an element with a well-defined class 
  $[x]^r \in E^r (\fC)$, such that $f (x)$ can be (non-uniquely) written as
  \[
    f (x) = y_1 + y_2
  \]
  where $y_1 \in \filt_p' C'$ and $y_2 \in \filt_{p-1}'C'$, then $[y_1]^r$ is 
  well-defined, and
  \[
    E^r (f) ([x]^r) = [y_1]^r.
  \]
\end{lemma}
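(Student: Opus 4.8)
The plan is to verify the two assertions of \fullref{lem:filt-map-ss-el} directly from the explicit description of the pages $E^r_p(\fC) = Z^r_p(\fC)/B^r_p(\fC)$ as sub-quotients of $C$ given above. First I would check that a filtered chain map $f \colon C \to C'$ carries $Z^r_p(\fC)$ into $Z^r_p(\fC')$ and $B^r_p(\fC)$ into $B^r_p(\fC')$. For the former: a representative of a class in $Z^r_p(\fC)$ is an element $c \in \filt_p C$ with $\partial c \in \filt_{p-r}C$; since $f$ respects the filtration and commutes with $\partial$, we get $f(c) \in \filt_p' C'$ and $\partial f(c) = f(\partial c) \in \filt_{p-r}' C'$, so $f(c)$ represents a class in $Z^r_p(\fC')$, and changing $c$ by an element of $\filt_{p-1}C$ changes $f(c)$ by an element of $\filt_{p-1}'C'$. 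The same bookkeeping with $\partial(\filt_{p+r-1}C)$ handles $B^r_p$. Hence $f$ descends to a well-defined module map $E^r(f) \colon E^r_p(\fC) \to E^r_p(\fC')$ for each $p$, and since $d^r$ is in each case induced by the same $\partial$, the square relating $d^r$ on the two sides commutes; thus $E^r(f)$ is a chain map. One should also remark that $E^r(f)$ is compatible with the isomorphisms $H_*(E^r) \iso E^{r+1}$ (equivalently, that $E^{r+1}(f)$ is the map induced by $E^r(f)$ on homology), which again follows because all of these identifications are induced by $\partial$ at the level of $C$; I would state this compatibility but relegate the routine verification to the references \cite{Wei94, Sta22}.

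Next I would prove the second, more concrete assertion. Let $x \in \filt_p C$ have a well-defined class $[x]^r \in E^r_p(\fC)$, so by the discussion preceding the lemma $x \in \filt_p C \cap \partial^{-1}(\filt_{p-r}C) + \filt_{p-1}C$; write $x = x_0 + x_1$ with $x_0 \in \filt_p C \cap \partial^{-1}(\filt_{p-r}C)$ and $x_1 \in \filt_{p-1}C$, so that $[x]^r$ is the class of $x_0$ in $Z^r_p/B^r_p$. Suppose $f(x) = y_1 + y_2$ with $y_1 \in \filt_p'C'$ and $y_2 \in \filt_{p-1}'C'$. Applying $f$ to the decomposition of $x$ and using $f(x_1) \in \filt_{p-1}'C'$, we get $y_1 = f(x_0) + (y_2' )$ where $y_2' := f(x_1) + f(x_0) - y_1 \cdots$ — more cleanly: $f(x_0) - y_1 = f(x) - f(x_1) - y_1 = y_2 - f(x_1) \in \filt_{p-1}'C'$. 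Since $f(x_0) \in Z^r_p(\fC')$ by the first part (as $x_0 \in \filt_p C \cap \partial^{-1}(\filt_{p-r}C)$) and $f(x_0)$ differs from $y_1$ by an element of $\filt_{p-1}'C'$, the element $y_1$ also lies in $\filt_p'C' \cap \partial^{-1}(\filt_{p-r}'C') + \filt_{p-1}'C'$, so $[y_1]^r$ is well-defined and equals the class of $f(x_0)$ in $E^r_p(\fC')$, which by definition is $E^r(f)([x_0]^r) = E^r(f)([x]^r)$. That is the desired identity.

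The arguments are essentially just careful unwinding of definitions, so I do not expect a serious obstacle; the only delicate point is keeping the filtration indices straight in the three nested layers (subcomplex, cycle-condition via $\partial^{-1}$, boundary term) and making sure each step uses only that $f$ is filtered and chain. I would present the computations compactly, as above, rather than belaboring them. The compatibility of $E^r(f)$ with the page-turning isomorphisms $H_*(E^r) \iso E^{r+1}$ is the one place where I would lean on the homological-algebra references rather than reproduce the verification, since it is standard and the explicit formulas in \cite{Sta22} make it routine; nothing in the later sections of the paper needs more than the module-level naturality plus the formula $E^r(f)([x]^r) = [y_1]^r$, which is exactly what the second paragraph establishes.
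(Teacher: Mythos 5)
Your proposal is correct and takes essentially the same approach as the paper: both arguments unwind the sub-quotient description of $E^r_p$ to show that $f$ descends to $E^r(f)$ with $E^r(f)([x]^r) = [f(x)]^r$, and then observe that $[f(x)]^r = [y_1]^r$ because the $\filt_{p-1}'C'$-term $y_2$ dies in the denominator of $Z^r_p(\fC')$. The paper states the first part more tersely ("one can check") and handles the second part directly via $[y_1+y_2]^r = [y_1]^r$, whereas you spell out the $Z^r/B^r$ bookkeeping and route through the decomposition $x = x_0 + x_1$; the content is the same.
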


\begin{proof}
    First, assuming we are given a map $f: \fC \to \fC'$, we will describe the map $E^r(f)$. Since $E^r(\fC)$ and $E^r(\fC')$ are sub-quotients of $\fC$ and $\fC'$ respectively, one can check that $f: \fC \to \fC'$ induces a map $E^r(f): E^r(\fC) \to E^r(\fC')$. On elements, $E^r(f)$ is defined such that
    \begin{equation*}
        E^r(f)([x]^r) = [f(x)]^r \,.
    \end{equation*}
    Note that $f$ is a filtered chain map and therefore commutes with the original differentials, i.e.\ $f\circ \diff = \diff' \circ f$. Therefore, we get that $E^r(f)$ also commutes with the induced differentials, i.e.\ $E^r(f) \circ d^r = (d')^r \circ E^r(f)$. It can be shown further that $E^r(f)$ is also the map induced by $E^{r-1}(f)$ on homology (a property shared by all morphisms of spectral sequences).

    Next, assume $x \in \filt_p C$ is an element with a well-defined class $[x]^r \in E^r(\fC)$, and that
    \begin{equation*}
        f(x) = y_1 + y_2
    \end{equation*}
    for some $y_1 \in \filt_p C'$ and $y_2 \in \filt_{p-1} C'$. Then
    \begin{equation*}
        E^r(f)([x]^r) = [f(x)]^r = [y_1 + y_2]^r = [y_1]^r
    \end{equation*}
    by the definition of $E^r(f)$ and the fact that $y_2 \in \filt_{p-1} C'$ is in the denominator of $Z^r(\fC')$.
\end{proof}

\section{Spectral GRID invariants}\label{sec:filt-def}

\subsection{Definition of the spectral GRID invariants} \label{sec:invariant-definitions}

Now, we have the necessary background to define our invariants.

\begin{definition}\label{def:n-g}
Suppose that $\G$ is a grid diagram, and let $A = A(\xp(\G))$. We define $\np(\G)$ to be the smallest integer $i$ for which $d^i_A [\xp(\G)]^i \neq 0 \in E^i_A$, or $\infty$ if $d^i_A [\xp(\G)]^i = 0$ for all $i \in \Z_{\geq 1}$. We define $\nm(\G)$ analogously, replacing $\xp(\G)$ by $\xm(\G)$.
\end{definition}

\begin{definition}\label{def:lambda-i-g}
Suppose that $\G$ is a grid diagram. For each $1 \leq i \leq \np(\G)$, we define
\[
  \lp_i(\G) = [\xp(\G)]^i \in E^i_{A(\xp(\G))}, \qquad \lm_i(\G) = [\xm(\G)]^i \in E^i_{A(\xm(\G))}.
\]
\end{definition}

For the rest of this section, we will focus on proving \fullref{thm:leg}, which 
states the invariance of
$\npm (\grid)$ and $\lambdat_i (\grid)$ under the choice of $\grid$.
This will allow us to denote them by $\npm (\Leg)$ and $\lambdat_i (\Leg)$.

In \cite{OST08, OSS15}, invariance of $\lambdatpm$ is proven by considering 
the isomorphisms, on the homology level, associated to commutation and 
destabilization, and showing that they carry $[\xpm]$ to $[\xpm]$. For filtered 
invariants, the isomorphisms are shown to be covered by filtered 
quasi-isomorphisms on the chain level. We take a very similar approach, but 
with two differences:
\begin{itemize}
  \item We work directly on the filtered chain level, and show that $\xpm$ is 
    carried by the filtered chain map either to $\xpm$, or to $\xpm + \y$, 
    where $\y$ belongs to a lower filtration; and
  \item We do not require our destabilization maps to be isomorphisms on 
    homology (of the associated graded object); this is in line with the 
    philosophy of viewing these maps as maps of decorated link cobordisms.  
    (See, for example, \cite{Zem19}.) Accordingly, we also separately consider 
    stabilization (and not just destabilization) maps.
\end{itemize}

\subsection{Commutation}\label{sec:commutation}

First, we prove invariance under commutation.

\begin{lemma}\label{lem:comm}
Suppose $\G_1$ and $\G_2$ differ by a commutation move.
Then there exists a filtered chain homomorphism
\[\commmap\colon  \fGCt(\G_2)\to  \fGCt(\G_1) 
\]
such that \[\commmap(\xpm(\G_2)) = \xpm(\G_1) + \y,
\]
where $\y \in \filt_{A (\xpm (\grid_2)) - 1} \fGCt (\grid_1)$.
\end{lemma}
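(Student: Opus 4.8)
The plan is to follow the standard "pentagon counting" argument for grid commutation maps from \cite{OST08, OSS15}, but carried out at the filtered chain level and keeping careful track of the Alexander filtration. Suppose $\G_1$ and $\G_2$ differ by a column commutation (the row case is symmetric). On the torus, the commutation move replaces one vertical circle $\beta$ of $\G_1$ by a new vertical circle $\gamma$ of $\G_2$, where $\gamma$ intersects $\beta$ in exactly two points. First I would set up the standard map $\commmap \colon \fGCt(\G_2) \to \fGCt(\G_1)$ defined by counting empty pentagons: for a grid state $\x \in S(\G_2)$,
\[
  \commmap(\x) = \sum_{\y \in S(\G_1)} \ \sum_{\substack{p \in \ePent^\circ(\x,\y) \\ p \cap \OO = \emptyset}} \y,
\]
where the pentagons have one vertex at one of the two $\beta \cap \gamma$ intersection points and are required to avoid the $\OO$-markers. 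I would then recall (citing the grid homology literature, since this is routine) that $\commmap$ is a chain map, and that it respects the Alexander filtration — this follows from the analogue of \eqref{eq:alex-filt-rel} for pentagons, which shows $A(\y) \le A(\x)$ whenever an empty pentagon contributes, so $\commmap(\filt_s \fGCt(\G_2)) \subseteq \filt_s \fGCt(\G_1)$. Hence $\commmap$ is a filtered chain homomorphism.

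The heart of the lemma is the computation of $\commmap(\xpm(\G_2))$. Here I would argue geometrically exactly as in the unfiltered case: because $\xp(\G_2)$ consists of the points immediately northeast of the $X$-markers, and because the commuted columns are "eligible," there is a distinguished empty pentagon from $\xp(\G_2)$ to $\xp(\G_1)$ (the small pentagon near the relevant $X$), and it is disjoint from $\OO$. This contributes the term $\xp(\G_1)$. Every other empty pentagon from $\xp(\G_2)$ avoiding $\OO$ contributes some other state $\y$, and I would show each such $\y$ satisfies $A(\y) < A(\xp(\G_2))$: indeed, the pentagon relation gives $A(\y) \le A(\xp(\G_2))$, with equality only if the pentagon contains no $\XX$-marker and no $\OO$-marker, and a case analysis (the same one used in \cite{OST08} to prove $[\xpm]$ is preserved on homology) shows the only such pentagon is the distinguished one. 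Collecting all the non-distinguished contributions into $\y := \commmap(\xp(\G_2)) - \xp(\G_1)$, we get $\y \in \filt_{A(\xp(\G_2)) - 1} \fGCt(\G_1)$ as claimed. The argument for $\xm(\G_2)$ is identical, using the points southwest of the $X$-markers and the mirror-image pentagons.

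I expect the main obstacle to be the filtration bookkeeping in the case analysis of pentagons emanating from $\xpm(\G_2)$: one must check that no "large" empty pentagon avoiding $\OO$ can contain exactly zero $\XX$-markers (which would force it to preserve the Alexander level). This is precisely the content of the computation in \cite{OST08} showing $[\xpm(\G_2)] \mapsto [\xpm(\G_1)]$ on the associated graded homology; the point here is that we get the sharper statement that the "error term" $\y$ literally drops filtration, rather than merely being a boundary in the associated graded complex. I would handle this by observing that a pentagon contributing to $\commmap$ that contains no $\XX$-marker but starts at $\xpm(\G_2)$ is forced, by the marker placement defining $\xpm$ and the eligibility of the commutation, to be the distinguished thin pentagon — so there is genuinely nothing else at the top filtration level. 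Once this is in hand, the remaining verifications (chain map, filtration-preserving) are standard and can be quoted.
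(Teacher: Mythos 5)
Your proposal is correct and takes essentially the same route as the paper: define the pentagon-counting map $\commmap$ (cited from \cite{MOST07, OST08}), observe that blocking $\OO$ but allowing $\XX$ makes it a filtered chain map via the pentagon analogue of \eqref{eq:alex-filt-rel}, and then note that the unique pentagon from $\xpm(\G_2)$ containing no $X$'s, no $O$'s, and no interior $\x$-points is the thin one contributing $\xpm(\G_1)$, while every other admissible pentagon contains at least one $X$ and hence lands strictly below filtration level $A(\xpm(\G_2))$. The paper does a row commutation rather than a column commutation and relies slightly more heavily on citing \cite{OST08} for the ``only one $X$-free pentagon'' claim, but these are cosmetic differences.
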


\begin{proof}
This is essentially \cite[Lemma~6.6]{OST08}.
We briefly recall here the definition of the map constructed there, as we will also need it in \fullref{ssec:pinch}. The map is defined by a count of pentagons, as follows. Suppose the commutation is a row commutation, and combine $\G_1$ and $\G_2$ into one diagram as in \fullref{fig:comm}, with $\alpha$ corresponding to $\G_1$ and $\alpha'$ corresponding to $\G_2$.
\begin{figure}[ht]
         \labellist
    \pinlabel $O$ at 76 37
    \pinlabel $X$ at 20 37
    \pinlabel $O$ at 169 37
    \pinlabel $X$ at 130 37
   \pinlabel {\small{$a$}} at 113 48
    \pinlabel \textcolor{Maroon}{$\alpha$} at -10 46
    \pinlabel \textcolor{Orange}{$\alpha'$} at -10 28
    \endlabellist
  \includegraphics[scale=1]{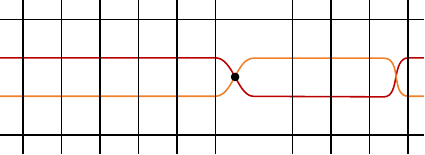}
  \caption{The combined diagram of a row commutation involving $\alpha$ and $\alpha'$.}
    \label{fig:comm}
\end{figure}
For $\x \in S(\G_2)$ and $\y \in S(\G_1)$, let $\Pent(\x,\y)$ be the space of pentagons in the combined diagram with the following properties. First, $\Pent(\x,\y)$ is empty if $\x$ and $\y$ do not agree at exactly $m-2$ points. An element $p \in \Pent(\x,\y)$ is an embedded pentagon with non-reflex angles whose boundary lies on the horizontal and vertical circles (including $\alpha$ and $\alpha'$) and whose vertices are points in $(\x \triangle \y) \cup \{a\}$, where $\triangle$ denotes the symmetric difference, such that $\bdy (\bdy p \cap \betas) = \x - \y$ in the induced orientation.
Let $\ePent(\x,\y)$ be the subset of those $p \in \Pent(\x,\y)$ such that $\Int(p) \cap \x = \emptyset$,
and $\ePent_{\OO}(\x,\y)$ the subset of those $p \in \Pent(\x,\y)$ such that $\Int (p) \cap \x = p \cap \OO = \emptyset$.
Then, we define a linear map $\commmap$ on $\fGCt(\G_2)$ given by
\[ \commmap(\x) = \sum_{\y\in S(\G_1)} \sum_{p \in \ePent_{\OO}(\x,\y)} \y.\] 

This map is known to respect the Maslov grading and the Alexander filtration by 
\cite[Lemma~3.1]{MOST07}.
In
the proof of \cite[Lemma~6.6]{OST08}, only pentagons that do not contain $X$'s 
are considered,
and it is asserted that $\commmap(\xpm(\G_2)) = \xpm(\G_1)$; indeed, there is 
only one pentagon from $\xpm (\grid_2)$
that does not contain $X$'s (or intersection points in $\x$),
which gives the term 
$\xpm (\grid_1)$. Allowing pentagons that contain $X$'s (but blocking those that contain $O$'s), all other pentagons
from $\xpm (\grid_2)$ contain at least one $X$, which means that the Alexander 
filtration of the target generator $\y \neq \xpm (\grid_1)$ must be lower.
\end{proof}

\subsection{Stabilization}\label{sec:stabilization}

Next, we consider stabilization maps.

\begin{lemma}\label{lem:stab}
  Suppose $\grid_1$ is obtained from $\grid_2$ by a type \textit{X:SE} or 
  \textit{X:NW} stabilization. Then there exists a filtered chain homomorphism
  \[
    \stabmap \colon \fGCt (\grid_2) \to \fGCt (\grid_1)
  \]
  such that
  \[
    \stabmap (\xpm (\grid_2)) = \xpm (\grid_1) + \y,
  \]
  where $\y \in \filt_{A (\xpm (\grid_2)) - 1} \fGCt (\grid_1)$.
\end{lemma}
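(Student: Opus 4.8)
The plan is to follow the strategy of \cite{OST08, OSS15} for destabilization, but phrased on the filtered chain level and without insisting on a quasi-isomorphism, exactly as outlined in the two bullet points above and as executed for commutation in \fullref{lem:comm}. First I would set up the combined picture for the stabilization: label the new $2\times2$ block in $\grid_1$, with the new $X$ marking in the northwest cell and the new $O$'s in the northeast and southwest cells (for type \textit{X:SE}; the \textit{X:NW} case is symmetric), and let $c$ denote the new intersection point of the new horizontal and vertical circles closest to the distinguished cell. Following \cite[Chapter~14]{OSS15}, generators of $\fGCt(\grid_1)$ split according to whether they contain $c$; write $\genset{I}$ and $\genset{N}$ for the two subsets, so that there is an identification of $\genset{I}$ with $S(\grid_2)$ after deleting $c$. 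The destabilization map is then a count of ``domains'' (in the relevant flavor, these are essentially rectangles in the stabilized diagram that are allowed to pass through the new $X$), and I would adopt the same definition used in \cite{OSS15} for the (de)stabilization chain homotopy equivalence, but modified so that the map is only required to commute with the differential and respect the filtration, not to be a homotopy equivalence. Concretely, I would take $\stabmap$ to be the chain map of the mapping-cone form $\stabmap = \genset{I} \text{-component} + (\text{correction terms into } \genset{N})$, whose existence and chain-map property on the \emph{associated graded} level is precisely \cite[Lemmas~6.5, 6.6]{OST08} (or the corresponding results in \cite[Chapter~14]{OSS15}); the content here is to check it still works once $X$-crossing domains are allowed, using that such domains strictly drop the Alexander filtration by \eqref{eq:alex-filt-rel}.

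The key computational step is to identify where $\xpm(\grid_2)$ goes. Under a type \textit{X:SE} or \textit{X:NW} stabilization, the canonical generators behave very cleanly: $\xpm(\grid_1)$ is obtained from $\xpm(\grid_2)$ by inserting the appropriate corner point of the new block, and there is a unique domain from $\xpm(\grid_2)$ (after inserting $c$) to $\xpm(\grid_1)$ that contains no $O$'s and no interior intersection points — this is the ``obvious'' small domain in the new block, and it contributes the term $\xpm(\grid_1)$. This is exactly the computation that \cite{OST08} carries out to show the unfiltered map sends $[\xpm]$ to $[\xpm]$. Allowing domains that pass through the new $X$ (but still blocking $O$'s), every other contribution from $\xpm(\grid_2)$ contains at least one $X$, hence by \eqref{eq:alex-filt-rel} lands in a generator $\y$ with $A(\y) < A(\xpm(\grid_2)) = A(\xpm(\grid_1))$, so all such terms lie in $\filt_{A(\xpm(\grid_2))-1}\fGCt(\grid_1)$. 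This gives precisely $\stabmap(\xpm(\grid_2)) = \xpm(\grid_1) + \y$ with $\y$ in the claimed filtration level. I would verify the grading computation using \eqref{eq:xpm grad}, which guarantees $M$ and $A$ of the two canonical generators match up as needed (the stabilization does not change $\tb$, $\rot$, or $\ell$, but does increase the grid number, consistent with the filtered homotopy type being an invariant of link type plus grid size).

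The step I expect to be the main obstacle is verifying that $\stabmap$ is genuinely a filtered \emph{chain} map once one enlarges the count to include $X$-crossing domains — i.e.\ that $\stabmap \circ \bdt_2 = \bdt_1 \circ \stabmap$. In the unfiltered setting of \cite{OSS15} this follows from a careful analysis of compositions of rectangles/pentagons and the identification of $\fGCt(\grid_1)$ with a mapping cone; with $X$-crossing domains allowed, one has additional broken-domain configurations to cancel in pairs, and one must check that no degenerate term survives. My approach would be to organize this exactly as in \cite[proof of Lemma~6.6]{OST08} and \cite[Chapter~14]{OSS15}: decompose according to behavior near $c$, use the mapping-cone description $\fGCt(\grid_1) \simeq \Cone(\text{map between two copies of (a stabilization of) }\fGCt(\grid_2))$, and observe that the extra $X$-carrying domains only ever decrease the filtration, so they are compatible with the filtered structure and their cancellations are the filtered analogue of the unfiltered ones. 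Since respecting the Maslov grading and Alexander filtration for pentagon/rectangle counts is already recorded in \cite[Lemma~3.1]{MOST07}, the remaining work is bookkeeping rather than a new idea, and the lemma follows.
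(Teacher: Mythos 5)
Your high-level plan coincides with the paper's: take the stabilization map from \cite[Chapter~14]{OSS15}, reduce to the \emph{tilde} flavor, and track where $\xpm$ goes. But there are three concrete inaccuracies that would need to be fixed for this to go through.

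First, the domains counted by $\stabmap$ are not ``essentially rectangles in the stabilized diagram allowed to pass through the new $X$''; they are the \emph{snail domains} of type \textit{oL} from \cite[Definition~14.3.6]{OSS15}, a one-parameter family of spiral domains of complexity $1, 3, 5, 7, \dots$. This distinction matters: the argument that non-trivial contributions drop the Alexander filtration is a case analysis on the multiplicities of these snail domains at the $2 \times 2$ block (which forces a complexity-$\geq 3$ domain through $X_2$, hence through several markings), and it does not reduce to ``rectangles containing an $X$.''

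Second, your plan to verify the chain-map property via ``the mapping-cone description $\fGCt(\grid_1) \simeq \Cone(\cdots)$'' does not apply in the filtered setting: as the paper points out, $\fGCt(\grid_1)$ decomposes as $I \oplus N$ only as a module, and the total differential has components $I \to N$ \emph{and} $N \to I$, so it is not a mapping cone of $I$ and $N$. The mapping-cone picture holds on the associated graded object, but you need the filtered chain-map statement; the paper simply cites \cite[Lemmas~13.3.12, 14.3.8]{OSS15} (reduced to the \emph{tilde} flavor) rather than reproving this from a cone description.

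Third, the leading term $\xpm(\grid_1)$ does not come from a ``small domain in the new block.'' Under the bijection $e' \colon S(\grid_2) \to I(\grid_1)$, $\x \mapsto \x \cup \{c\}$, one already has $e'(\xpm(\grid_2)) = \xpm(\grid_1)$; the term $\xpm(\grid_1)$ is contributed by the \emph{trivial} (complexity-$1$) domain in $\pi^{\mathit{oL}}(e'(\xpm(\grid_2)), e'(\xpm(\grid_2)))$. This is worth getting right, since it is what makes the claim that all \emph{other} terms lie strictly below in the Alexander filtration clean.

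(Also a wording slip: your second paragraph refers to ``the destabilization map,'' but \fullref{lem:stab} is the stabilization map $\fGCt(\grid_2) \to \fGCt(\grid_1)$; the destabilization map is handled separately in \fullref{lem:destab}, using type \textit{iL} domains.)
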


\begin{proof}
  We focus first on a type \textit{X:SE} stabilization. In this case, we define 
  the filtered chain homomorphism $\stabmap$ to be the map 
  $\stabmap^{\mathit{oL}}$ in \cite[Definition~14.3.6]{OSS15} with all formal 
  variables $V_i$ set to zero. We now recall $\stabmap^{\mathit{oL}}$ with this 
  modification.

  Let $c$ be the intersection point of the two new curves in $\grid_1$, and let $X_1$ and $X_2$ be the new $X$ markings in the southwestern and northeastern cell of the distinguished $2\times 2$ square,  as 
  shown in \fullref{fig:stab}.  \begin{figure}[ht]
            \vspace{.2cm}
        \labellist
            \pinlabel $O$ at 12 33
            \pinlabel $X_1$ at 12 12
            \pinlabel $X_2$ at 33 33
            \pinlabel {\small{$c$}} at 27 17
        \endlabellist
  \includegraphics[scale=1]{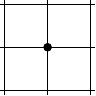}
  \caption{The distingushed $2\times 2$ square of the diagram  $\G_1$ obtained from $\G_2$ by a type \textit{X:SE} stabilization.}
  \label{fig:stab}
\end{figure}
Decompose 
  $\genset{\grid_1}$ into the disjoint union $I (\grid_1) \disjunion N 
  (\grid_1)$, where $I (\grid_1)$ consists of $\x \in \genset{\grid_1}$ such 
  that $c \in \x$, and $N (\grid_1) = \genset{\grid_1} \setminus I 
  (\grid_1)$. Writing $I$ and $N$ for the corresponding submodules, this 
  allows us to decompose
  \(
    \fGCt (\grid_1),
  \)
  as a module, as $I \dirsum N$. Note that, as a chain complex, $\fGCt 
  (\grid_1)$ is not the direct sum of $I$ and $N$, or even a mapping cone of 
  them; the differential consists of maps between $I$ and $N$ in both 
  directions.
  
  In what follows, given $\x, \y \in \SG$, a \emph{domain $p$ from $\x$ 
      to $\y$} is a formal linear combination of the closures of the squares in 
    $\grid$, such that $\bdy (\bdy p \cap \betas) = \x - \y$ in the induced 
    orientation on $\bdy p$. The space of all domains from $\x$ to $\y$ is 
    denoted by $\pi (\x, \y)$. In particular, $\Pent (\x, \y) \subset \pi (\x, 
    \y)$. See, for example, \cite[Definition~4.6.4]{OSS15}.

  Now for $\x \in I (\grid_1)$ and $\y \in \genset{\grid_1}$, a domain $p \in
  \pi (\x, \y)$ is said to be \emph{of type oL} (originally for ``out of the 
  left'') if it is trivial, in which case it has \emph{complexity} $1$, or it 
  satisfies the following conditions:
  \begin{itemize}
    \item All the local multiplicities of $p$ are non-negative;
    \item At each corner in $\x \union \y \setminus \set{c}$, at least three of 
      the four adjacent squares have vanishing local multiplicities;
    \item The domain $p$ has local multiplicity $k-1$ at the southeast square 
      adjacent to $c$, and local multiplicity $k$ at the other three squares 
      adjacent to $c$; and
    \item The generator $\y$ has $2k+1$ coordinates not in $\x$.
  \end{itemize}
  The \emph{complexity} of a non-trivial type \textit{oL} domain is defined to 
  be $2k+1$. See \fullref{fig:stab-domains} for examples of type \textit{oL} 
  domains of complexities $1$, $3$, $5$, and $7$. The set of domains of type \textit{oL} 
  from $\x$ to $\y$ is denoted by $\pi^{\mathit{oL}} (\x, \y)$. In the literature, 
  such domains are often called \emph{snail domains}.
  \begin{figure}[ht]
          \labellist
            \pinlabel {\small{$c$}} at 8 53
            \pinlabel {\small{$c$}} at 71 53
            \pinlabel {\small{$c$}} at 163 53
            \pinlabel {\small{$c$}} at 293 53
        \endlabellist
      \includegraphics[scale=1]{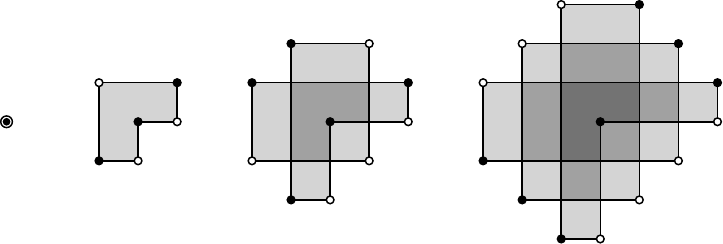}
  \caption{Examples of domains of type \textit{oL}. For each domain, the initial generator is represented by solid circles, and the terminal one by hollow circles.}
    \label{fig:stab-domains}
\end{figure}

  We are ready to define
  \[
    \stabmap = \stabmap^{\mathit{oL}} \colon \fGCt (\grid_2) \to \fGCt 
    (\grid_1)
  \]
  linearly by
  \[
    \stabmap^{\mathit{oL}} (\x) = \sum_{\y \in \genset{\grid_1}} \sum_{p \in 
      \pi^{\textit{oL}} (e' (\x), \y)} \y,
  \]
  where $e' \colon \genset{\grid_2} \to I (\grid_1)$ is the bijection $\x 
  \mapsto \x \union \set{c}$. By (reducing from the \emph{minus} to the 
  \emph{tilde} versions of) \cite[Lemmas~13.3.12 and 14.3.8]{OSS15}, the map 
  $\stabmap^{\mathit{oL}}$ is a chain map that respects the Maslov grading and the Alexander filtration. (Note that, on the level of 
  associated graded objects, $\stabmap^{\mathit{oL}}$ is just the map $e'$, 
  which is the map on the top left of \cite[Figure~5.13]{OSS15}.)

  We now investigate the image of $\xp (\grid_2)$ under the map $\stabmap$.  
  First of all, the trivial domain in $\pi^{\mathit{oL}} (e' (\xp (\grid_2)), 
  e' (\xp (\grid_2)))$ obviously contributes a term. Since clearly $e' (\xp 
  (\grid_2)) = \xp (\grid_1)$, this term is exactly $\xp (\grid_1)$. Now for 
  $\y \neq e' (\xp (\grid_2))$, the domain $p \in \pi^{\mathit{oL}} (e' (\xp 
  (\grid_2)), \y)$ contains at least two $X$'s and one $O$; by an argument 
  analogous to the proof of \cite[Lemma~13.3.12]{OSS15}, we must have $\y \in 
  \filt_{A (\x) - r} \fGCt (\grid_1)$, where $r \geq 3$.
  (Since $\xp (\grid_1)$ occupies the intersection point to the northeast of 
  $X_2$, such a domain $p$ must have multiplicity $1$ in the square containing 
  $X_2$, which means that $p$ must have complexity $3$, i.e.\ is the 
  second-simplest kind of type \textit{oL} domain.)
  This shows that $\xp (\grid_1)$ is the unique generator that appears in the 
  image of $\xp (\grid_2)$ that has the same Alexander filtration level.

  A similar proof applies for $\xm$. Finally, for a type \textit{X:NW} 
  stabilization, the proof above can be adapted, with $\stabmap$ now counting 
  domains that are obtained from the ones above by rotation in the plane by 
  $\pi$.
\end{proof}

\begin{remark}
  \label{rmk:stab-map-oss}
  In \cite[Section~14.3]{OSS15}, the image of $\xpm (\grid_2)$ under $\stabmap$ 
  is not computed; instead, it is observed in \cite[Lemma~14.3.9]{OSS15} that the 
  filtered stabilization map, made up of $\stabmap^{\mathit{oL}}$ and another 
  map $\stabmap^{\mathit{oR}}$, induces a map on the associated graded objects 
  $\GCt$ (or rather, $\GCm$ there) that sends $\xpm$ to $\xpm$. This is 
  sufficient to cover the case $i = 1$ in \fullref{thm:leg}.
\end{remark}

\subsection{Destabilization}
\label{ssec:destab}

We now move on to destabilization.

\begin{lemma}\label{lem:destab}
  Suppose $\grid_1$ is obtained from $\grid_2$ by a type \textit{X:SE} or 
  \textit{X:NW} stabilization. Then there exists a filtered chain homomorphism
  \[
    \destabmap \colon \fGCt (\grid_1) \to \fGCt (\grid_2)
  \]
  such that
  \[
    \destabmap (\xpm (\grid_1)) = \xpm (\grid_2).
  \]
\end{lemma}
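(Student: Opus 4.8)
The plan is to construct the destabilization map $\destabmap$ and then verify, by a domain count near the distinguished $2\times 2$ square, that it sends the canonical generator to the canonical generator exactly, with no lower-filtration correction terms. Concretely, I would use the map $\destabmap^{\mathit{iL}}$ (``into the left'') of \cite[Definition~14.3.6]{OSS15} with all formal variables $V_i$ set to zero, which is the natural partner to the stabilization map $\stabmap^{\mathit{oL}}$ used in \fullref{lem:stab}. As before, decompose $\genset{\grid_1} = I(\grid_1) \disjunion N(\grid_1)$ according to whether the distinguished intersection point $c$ (see \fullref{fig:stab}) is used, and write $\fGCt(\grid_1) = I \dirsum N$ as modules. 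The map $\destabmap$ is defined on $\x \in \genset{\grid_1}$ by a count of \emph{type iL domains} from $\x$ to $e'(\y)$ for $\y \in \genset{\grid_2}$ (where $e'$ is again $\y \mapsto \y \union \set{c}$), followed by the identification $e'(\y) \leftrightarrow \y$; these are the domains that ``undo'' the snail domains of \fullref{lem:stab}. By (the tilde reduction of) \cite[Lemmas~13.3.12 and 14.3.8]{OSS15}, $\destabmap$ is a filtered chain map respecting the Maslov grading and Alexander filtration, and on associated graded objects it induces the projection-type map appearing in \cite[Figure~5.13]{OSS15}.

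Next I would compute $\destabmap(\xp(\grid_1))$. The key geometric observation is that $\xp(\grid_1)$ occupies the intersection point directly to the northeast of the $X$ in the northwest cell of the distinguished square, so in particular $c \notin \xp(\grid_1)$, i.e.\ $\xp(\grid_1) \in N(\grid_1)$. I want to show there is a unique type \textit{iL} domain out of $\xp(\grid_1)$, namely the trivial one, landing on $e'(\xp(\grid_2))$, so that $\destabmap(\xp(\grid_1)) = \xp(\grid_2)$ on the nose. The argument mirrors the one in \fullref{lem:stab}: any nontrivial type \textit{iL} domain emanating from $\xp(\grid_1)$ must have positive multiplicity near $c$ and would have to pass over the $X$ in the northwest cell adjacent to $c$; tracking local multiplicities and the constraint on how many coordinates of the target differ from $\xp(\grid_1)$, one sees no such domain can contribute. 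Since $\xp(\grid_2) = \xp(\grid_1) \setminus \set{c}$ in the obvious sense (reading off the canonical generator in the smaller grid), the trivial domain contributes exactly $\xp(\grid_2)$. The same analysis applies verbatim to $\xm$, using that $\xm(\grid_1)$ sits southwest of the relevant $X$; and the type \textit{X:NW} case follows by rotating all domains by $\pi$ in the plane, exactly as at the end of \fullref{lem:stab}.

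The main obstacle I anticipate is pinning down precisely which type \textit{iL} domains can have $\xp(\grid_1)$ as their initial generator and confirming that none of them (other than the trivial one) actually occurs — this requires a careful local picture near the distinguished square together with the global constraint ``$\y$ has $2k+1$ coordinates not in $\x$'' from the definition of snail/co-snail domains, and it is the place where one must be most careful that the tilde reduction (setting $V_i = 0$, i.e.\ forbidding domains that cover $O$'s) does not secretly kill the trivial term or fail to kill some unwanted term. A secondary, purely bookkeeping point is to make sure the identification of $\xpm(\grid_1)$ with $\xpm(\grid_2)$ under the correspondence $I(\grid_1) \leftrightarrow \genset{\grid_2}$ is the literal one, so that the equality $\destabmap(\xpm(\grid_1)) = \xpm(\grid_2)$ holds without even a lower-filtration error term — this is why the statement of \fullref{lem:destab}, unlike \fullref{lem:comm} and \fullref{lem:stab}, has no ``$+\,\y$''. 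I expect both points to be routine once the local diagram is drawn, citing \cite[Chapter~13 and Section~14.3]{OSS15} for the structural properties of the maps.
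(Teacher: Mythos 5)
Your proposal has the right skeleton (type \textit{iL} snail domains, the $I \disjunion N$ decomposition, the identification $e$), but it hinges on a factual claim about the canonical generators that is incorrect and that reverses the entire logic of the proof. You assert that $\xp(\grid_1)$ occupies a point other than $c$ and hence that $\xp(\grid_1) \in N(\grid_1)$. This is false. In the type \textit{X:SE} picture of \fullref{fig:stab}, the distinguished $2 \times 2$ square has $X$'s in the southwest and northeast cells and $c$ is the central intersection point of the two new circles; the point directly northeast of the southwest $X$ is exactly $c$, and the point directly southwest of the northeast $X$ is also $c$. Hence $c \in \xp(\grid_1)$ and $c \in \xm(\grid_1)$, i.e.\ $\xpm(\grid_1) \in I(\grid_1)$, not $N(\grid_1)$. (Under a $\pi$-rotation the same holds for \textit{X:NW}.)

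This matters because the map $\destabmap^{\mathit{iL}}$ is defined by cases: on $I(\grid_1)$ it counts only trivial (complexity-$1$) type \textit{iL} domains — so it is literally the identification $\x \mapsto e(\x)$ — while it is only on $N(\grid_1)$ that the nontrivial (complexity $> 1$) snail domains enter. Because $\xpm(\grid_1) \in I(\grid_1)$, the computation is immediate: $\destabmap(\xpm(\grid_1)) = e(\xpm(\grid_1)) = \xpm(\grid_2)$, with no domain analysis and no lower-filtration error term, which is precisely why this lemma has a cleaner conclusion than \fullref{lem:comm} and \fullref{lem:stab}. Your version instead proposes to start from a generator in $N(\grid_1)$ and argue that the trivial domain is the only contribution; but if $\xp(\grid_1)$ really were in $N(\grid_1)$, the trivial domain would not even be a valid type \textit{iL} domain (those land in $I(\grid_1)$, and a trivial domain has $\y = \x$), so the argument as stated cannot be run. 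You would need to rule out all complexity-$3$, $5$, $\dots$ snail domains into $I$, which is a genuinely different (and harder) problem, and one that is rendered moot once the membership $\xpm(\grid_1) \in I(\grid_1)$ is noted. The secondary worry you flag — that setting $V_i = 0$ might kill the trivial term — is also a non-issue for the same reason: the trivial domain covers no squares at all. Everything else (citing the structural properties of snail domains, the $\pi$-rotation for \textit{X:NW}, parallel treatment of $\xm$) is fine and in line with the paper, but the central step needs to be replaced with the correct observation about where $c$ sits.
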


\begin{proof}
  This is essentially proved in \cite[Lemma~6.5]{OST08}, but we opt to present 
  a proof here for consistency with the more modern choice of destabilization 
  maps as in \cite{OSS15}.
  
  We again focus on a type \textit{X:SE} destabilization first.  Like 
  $\stabmap$, we also define $\destabmap$ using snail domains.  While a 
  filtered chain map is not spelled out in \cite{OSS15} for a type 
  \textit{X:SE} destabilization, we may draw inspiration from the right half of 
  the commutative diagram in \cite[Figure~5.13]{OSS15} (also alluded to in the 
  proof of \fullref{lem:stab} above) to figure out which snail domains to use.  
  The chain complexes in \cite[Figure~5.13]{OSS15} are those on the level of 
  associated graded objects in our context. (For example, note that for us, 
  there is also an arrow from $I$ to $N$.) Our snail domains should include the 
  maps $I \to L$ and $N \to L$ in that diagram.

  With this insight, we define $\destabmap$ as follows. We continue our 
  notation of $c$, $I (\grid_1)$, $N (\grid_1)$, and $I$ and $N$ as in the 
  proof of \fullref{lem:stab}. For $\x \in \genset{\grid_1}$ and $\y \in I 
  (\grid_1)$, a domain $p \in \pi (\x, \y)$ is said to be \emph{of type iL} 
  (originally for ``into the left'') if it is trivial, in which case it has 
  \emph{complexity} $1$, or it satisfies the following conditions:
  \begin{itemize}
    \item All the local multiplicities of $p$ are non-negative;
    \item At each corner in $\x \union \y \setminus \set{c}$, at least three of 
      the four adjacent squares have vanishing local multiplicities;
    \item The domain $p$ has local multiplicity $k-1$ at the northeast square 
      adjacent to $c$, and local multiplicity $k$ at the other three squares 
      adjacent to $c$; and
    \item The generator $\y$ has $2k+1$ coordinates not in $\x$.
  \end{itemize}
  The \emph{complexity} of a non-trivial type \textit{iL} domain is defined to 
  be $2k+1$. See \fullref{fig:destab-domains} for examples of type \textit{iL} 
  domains of complexities $1$, $3$, $5$, and $7$. The set of domains of type \textit{iL} of 
  complexity $1$ from $\x$ to $\y$ is denoted by $\pi^{\mathit{iL}}_1 (\x, \y)$, 
  while the set of domains of type \textit{iL} of complexity greater than $1$ 
  from $\x$ to $\y$ is denoted by $\pi^{\mathit{iL}}_{>1} (\x, \y)$.
\begin{figure}[ht]
          \labellist
            \pinlabel {\small{$c$}} at 8 53
            \pinlabel {\small{$c$}} at 71 53
            \pinlabel {\small{$c$}} at 163 53
            \pinlabel {\small{$c$}} at 293 53
        \endlabellist
      \includegraphics[scale=1]{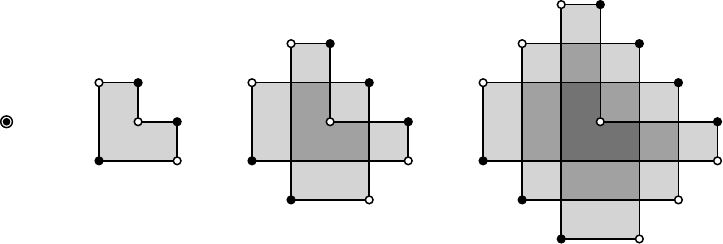}
  \caption{Examples of domains of type \textit{iL}. For each domain, the initial generator is represented by solid circles, and the terminal one by hollow circles.}
    \label{fig:destab-domains}
\end{figure}

  We now define
  \[
    \destabmap = \destabmap^{\mathit{iL}} \colon \fGCt (\grid_1) \to \fGCt 
    (\grid_2)
  \]
  linearly by
  \begin{equation}
    \label{eq:destabmap}
    \destabmap^{\mathit{iL}} (\x) =
    \begin{cases}
      \displaystyle
      \sum_{\y \in I (\grid_1)} \sum_{p \in \pi^{\mathit{iL}}_1 (\x, 
          \y)} e (\y) & \text{if } \x \in I (\grid_1),\footnotemark\\
      \displaystyle
      \sum_{\y \in I (\grid_1)} \sum_{p \in \pi^{\mathit{iL}}_{>1} 
          (\x, \y)} e (\y) & \text{if } \x \in N (\grid_1),
    \end{cases}
  \end{equation}
  \footnotetext{Note that the double sum in this first line is in fact simply $e (\x)$.}
  where $e \colon I (\grid_1) \to S (\grid_2)$ is the bijection $\x \union 
  \set{c} \mapsto \x$. In other words, if we separate the two cases in 
  \eqref{eq:destabmap} into maps
  \[
    \destabmap^{\mathit{iL}}_1 \colon I \to \fGCt (\grid_2), \qquad
    \destabmap^{\mathit{iL}}_{>1} \colon N \to \fGCt (\grid_2),
  \]
  then $\destabmap \colon \fGCt (\grid_1) \to \fGCt (\grid_2)$ is given by the 
  horizontal maps in
  \[
    \xymatrix{
      I \ar@{<->}[d]_{\widetilde{\bdy}} \ar[rr]^{\destabmap^{\mathit{iL}}_1}
      & & \fGCt (\grid_2)\\
      N \ar[urr]_{\destabmap^{\mathit{iL}}_{>1}}
    }.
  \]
  See \cite[(13.6)]{OSS15} for comparison. (The destabilization there is of 
  type \textit{X:SW}.)

  By a case analysis similar to \cite[Lemmas~13.3.12 and 13.3.13]{OSS15}, one 
  could see that $\destabmap$ is a chain map that respects the Maslov 
  grading and the Alexander filtration. For brevity, we omit the details, which are considerably tedious.  
  Alternatively, one could also appeal to the holomorphic Heegaard Floer 
  theory, e.g.\ via \cite[Proposition~3.10 and Lemma~3.11]{MOT20}. (For the 
  astute reader, the map $\destabmap^{\mathit{iL}}_{>1}$ is needed to ensure 
  that we have a chain map.)

  We now investigate the image of $\xpm (\grid_1)$ under the map $\destabmap$.  
  Since $\xpm (\grid_1) \in I (\grid_1)$, we immediately get that
  \[
    \destabmap (\xpm (\grid_1)) = e (\xpm (\grid_1)) = \xpm (\grid_2),
  \]
  which is what we wanted to prove. The case of type \textit{X:NW} 
  destabilizations is handled again by rotating domains by $\pi$.
\end{proof}

\begin{proof}[Proof of \fullref{thm:leg}]
  The result is obtained by applying \fullref{lem:filt-map-ss-el} to the filtered chain homomorphisms in \fullref{lem:comm}, \fullref{lem:stab}, and \fullref{lem:destab}.
\end{proof}

\section{Obstructions to decomposable Lagrangian cobordisms} \label{sec:obstr}

With the goal of proving \fullref{thm:cob}, we will define filtered chain maps corresponding to (the reverses of) pinch and birth moves such that the induced maps on spectral sequences preserve $\lp_i$ and $\lm_i$.

\subsection{Pinches}\label{ssec:pinch}
Suppose $\Leg_+$ and $\Leg_-$ are Legendrian links such that $\Leg_+$ is obtained from $\Leg_-$ by a pinch move. Then there exist diagrams $\G_+$ and $\G_-$ for $\Leg_+$ and $\Leg_-$, respectively, which only differ in the placement of a single pair of $X$'s or $O$'s in adjacent rows, as shown in \fullref{fig:sep}. If the two markings at which the diagrams differ are $X$'s, we will say that $\G_+$ is obtained from $\G_-$ by an $X$ swap; if the markings are $O$'s, we will say that $\G_+$ is obtained from $\G_-$ by an $O$ swap.

\begin{figure}[ht]
        \labellist
    \pinlabel $O$ at 7 111
    \pinlabel $X$ at 43 111
    \pinlabel $X$ at 80 93
    \pinlabel $O$ at 117 93
    \pinlabel $X$ at 7 38
    \pinlabel $O$ at 43 38
    \pinlabel $O$ at 80 19
    \pinlabel $X$ at 117 19
    \pinlabel $O$ at 148 111
    \pinlabel $X$ at 221 111
    \pinlabel $X$ at 184 93
    \pinlabel $O$ at 257 93
    \pinlabel $X$ at 148 38
    \pinlabel $O$ at 221 38
    \pinlabel $O$ at 184 19
    \pinlabel $X$ at 257 19
    \pinlabel $\G_-$ at 64 -10
    \pinlabel $\G_+$ at 204 -10
    \pinlabel $\Lambda_-$ at 322 -10
    \pinlabel $\Lambda_+$ at 382 -10
    \endlabellist
     \includegraphics[scale=1]{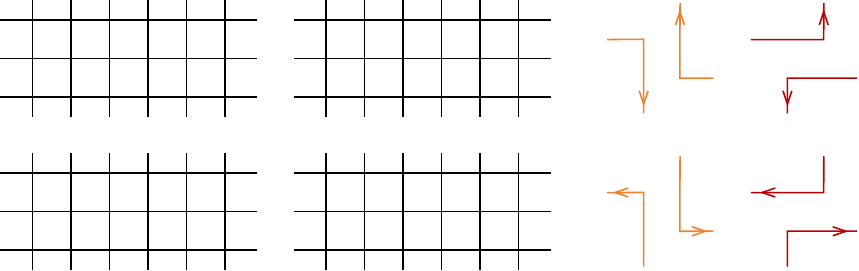}
    \vspace{0.5cm}
  \caption{The grid diagrams $\G_\pm$ corresponding to the two types of pinch moves. The top corresponds to an $X$ swap, and the bottom to an $O$ swap.}
    \label{fig:sep}
    \end{figure}

Through a series of local stabilization and commutation moves, we can ensure 
that the swap occurs between two markings that are separated by at least two 
vertical lines. This can be achieved, for example, by performing a 
stabilization of type \textit{X:SE} on any $X$ marking to the right of all four 
markings in the two adjacent rows where the swap is to be done, and then commuting the column containing the new $O$ to the left 
until it occupies the column between the two markings to be swapped.  
Alternatively, one could perform a stabilization of type \textit{X:NW} on any 
$X$ marking to the left of all four markings in the swap rows, and commute the column with the new 
$O$ to the right.

We combine the two diagrams $\G_+$ and $\G_-$ into a single diagram, as in \fullref{fig:combo}, which we call the \emph{combined diagram}. On the combined diagram, we can see each of $\G_+$ and $\G_-$ by using the same markings, but varying the placement of one horizontal circle ($\alpha$ for $\G_+$ and $\alpha'$ for $\G_-$, as seen in \fullref{fig:combo}).
Of particular 
interest are the intersection points of $\alpha$ and $\alpha'$ labeled  
$a$ and $b$ in \fullref{fig:combo}; these points will be used to define maps $\pinchmap_{X}$ and $\pinchmap_{O}$. 

\begin{figure}[ht]
        \labellist
    \pinlabel $O$ at 27 55
    \pinlabel $X$ at 82 36
    \pinlabel $X$ at 120 36
    \pinlabel $O$ at 175 19
    \pinlabel {\small{$a$}} at 101 48
    \pinlabel $X$ at 252 55
    \pinlabel $O$ at 307 36
    \pinlabel $O$ at 345 36
    \pinlabel $X$ at 400 19
    \pinlabel {\small{$b$}} at 381 48
        \pinlabel \textcolor{Maroon}{$\alpha$} at -10 46
    \pinlabel \textcolor{Orange}{$\alpha'$} at -10 28
    \endlabellist
  \includegraphics[scale=1]{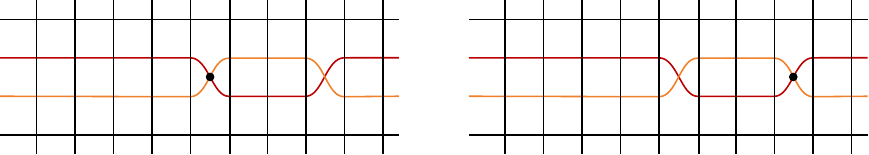}
  \caption{The combined grid diagrams corresponding to the two types of pinch moves. Left: An $X$ swap. Right: An $O$ swap.}
\label{fig:combo}
\end{figure}

\newpage

\subsubsection{\texorpdfstring{$X$ swap}{X swap}}
\begin{lemma}\label{lem:xswap}
Suppose $\G_+$ is obtained from $\G_-$ by an $X$ swap. Then there exists a filtered chain homomorphism
\begin{equation*}
\pinchmap_{X} \colon \fGCt(\G_+) \to \fGCt(\G_-) \left\llbracket 1, \frac{\abs{\Legp} - \abs{\Legm} + 1}{2} \right\rrbracket
\end{equation*}
such that
\[ \pinchmap_X(\xpm(\G_+))=\xpm(\G_-) + \y, \]
where $\y \in \filt_{A (\xpm (\grid_-)) - 1} \fGCt (\grid_-)$.
\end{lemma}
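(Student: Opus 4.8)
The plan is to define $\pinchmap_X$ by a count of pentagons in the combined diagram, in direct analogy with the commutation map $\commmap$ of \fullref{lem:comm}, but using the distinguished intersection point $a$ in \fullref{fig:combo} rather than a generic intersection point of $\alpha$ and $\alpha'$. Concretely, for $\x \in S(\G_+)$ and $\y \in S(\G_-)$, I would let $\ePent_{\OO}(\x,\y)$ denote the set of empty pentagons from $\x$ to $\y$ in the combined diagram that avoid all $O$ markings, with one vertex at $a$, and set
\[
  \pinchmap_X(\x) = \sum_{\y \in S(\G_-)} \sum_{p \in \ePent_{\OO}(\x,\y)} \y.
\]
First I would check that $\pinchmap_X$ is a chain map: this follows from the same juxtaposition-of-regions argument as in \cite[Lemma~3.1]{MOST07} (or the proof of \fullref{lem:comm}), counting the ends of one-parameter families of domains that decompose either as pentagon-then-rectangle or rectangle-then-pentagon, with the boundary degenerations canceling in pairs over $\Ftwo$. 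The fact that we require the diagrams to differ by a swap of markings separated by at least two vertical lines (arranged via the stabilization/commutation moves described before the lemma) is what guarantees the pentagons are embedded and the usual index/count bookkeeping goes through.

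Next I would pin down the grading and filtration shift. By \eqref{eq:maslov-gr-rel} and \eqref{eq:alex-filt-rel} applied to pentagons (with the extra vertex $a$ accounted for as in \cite[Lemma~3.1]{MOST07}), one computes that $\pinchmap_X$ shifts the Maslov grading by a fixed amount and shifts the Alexander filtration by a fixed amount; the precise shifts are forced by the gradings of the canonical generators via \eqref{eq:xpm grad} together with the change in component number, and they work out to $\llbracket 1, \tfrac{\abs{\Legp}-\abs{\Legm}+1}{2}\rrbracket$ — this is a direct local calculation, since $\tb$ and $\rot$ are unchanged by a pinch while $\abs{\Leg_\pm}$ differs by one, and one checks it on the single marking-avoiding pentagon described below. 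That this shift makes $\pinchmap_X$ respect (rather than merely shift) the Alexander filtration in the sense needed for a filtered chain map is then immediate, since all pentagons contribute with $A$-drop $\geq 0$ after the shift (pentagons containing extra $X$'s strictly lower $A$).

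Finally I would compute $\pinchmap_X(\xpm(\G_+))$. As in the proof of \fullref{lem:comm}, there is exactly one pentagon from $\xpm(\G_+)$ with a vertex at $a$ that contains neither an $O$ nor an $X$ nor a point of $\x$ in its interior, and tracing through the combined diagram shows its other $\beta$-vertices land precisely on $\xpm(\G_-)$; this yields the leading term $\xpm(\G_-)$. Every other pentagon from $\xpm(\G_+)$ contributing to $\pinchmap_X$ must contain at least one $X$ marking (this is where the $X$-swap geometry of \fullref{fig:combo} is used: the two markings at which $\G_+$ and $\G_-$ differ are $X$'s, so the "short" pentagon that would otherwise be $X$-free in a pure commutation now sweeps past one of these $X$'s), so by \eqref{eq:alex-filt-rel} its target $\y \neq \xpm(\G_-)$ lies in strictly lower Alexander filtration, i.e.\ $\y \in \filt_{A(\xpm(\G_-))-1}\fGCt(\G_-)$. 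Collecting these contributions as $\y$ gives $\pinchmap_X(\xpm(\G_+)) = \xpm(\G_-) + \y$ with $\y$ in the asserted filtration level, as desired. I expect the main obstacle to be the careful verification that $\pinchmap_X$ is a chain map — specifically, checking that the degenerations at the marked point $a$ and near the swapped $X$'s contribute in canceling pairs, and that no spurious boundary terms arise — which is the place where the combinatorics genuinely differs from the standard commutation argument; the grading computation and the image computation are comparatively routine once the pentagon count is set up correctly.
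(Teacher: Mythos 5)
Your overall strategy matches the paper's exactly: define $\pinchmap_X$ by counting empty pentagons with a vertex at $a$ that avoid $\OO$ (but not $\XX$), verify that it is a chain map by the standard juxtaposition argument, determine the grading/filtration shift, and then compute $\pinchmap_X(\xpm(\G_+))$ by locating the unique $X$- and $O$-free pentagon and noting that all others pass over an $X$.

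However, the grading computation contains a concrete error. You assert that ``$\tb$ and $\rot$ are unchanged by a pinch.'' This is false: by Chantraine's theorem, $\tb(\Legp)-\tb(\Legm) = -\chi(L) = 1$ for a single saddle. If you literally plug your stated premise into \eqref{eq:xpm grad}, you get a Maslov shift of $0$, not $1$, and $A(\xpm(\G_+)) - A(\xpm(\G_-)) = \tfrac{|\Legp|-|\Legm|}{2}$ rather than $\tfrac{|\Legp|-|\Legm|+1}{2}$, so the shift you claim to derive contradicts the premise you claim to derive it from. Moreover, your appeal to \cite[Lemma~3.1]{MOST07} (``with the extra vertex $a$ accounted for'') glosses over the key difference between the $X$ swap and a commutation: in a commutation the $\XX$ set is unchanged, so $\J(\XX,\XX)$ is the same for $\G_+$ and $\G_-$, whereas in an $X$ swap two $X$ markings move and $\J(\XX,\XX)$ changes. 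The paper handles this by doing the region-by-region $\J$-computation from scratch for a pentagon in the combined diagram, finding $M(\y)-M(\x) = -1$ and $A(\y)-A(\x) = |p\cap\XX| + \tfrac{|\Legm|-|\Legp|-1}{2}$ directly, rather than deducing the shift from the canonical generators. That computation is also what verifies that \emph{every} pentagon, not just the distinguished one, has $A$-drop at least the claimed shift; checking only on ``the single marking-avoiding pentagon described below'' establishes the shift but not that the map respects the filtration globally. Your chain-map argument and image computation are fine and coincide with the paper's; the gap is confined to the grading step.
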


\begin{proof}
 Given $\x\in S(\G_+)$ and $\y\in S(\G_-)$, define $\ePent(\x,\y)$ and $\ePent_{\OO}(\x,\y)$ as in \fullref{sec:commutation}, and let
\[ \pinchmap_X \colon \fGCt(\G_+)\to \fGCt(\G_-) \]
be the linear map that counts these pentagons, i.e.\
\[ \pinchmap_X (\x)= \sum \limits_{\y \in S(\G_-)} \sum \limits_{p\in \ePent_\OO(\x,\y)} \y. \]

First, we prove that $\pinchmap_X$ is a chain map. The proof is analogous to \cite[Lemma 3.1]{MOST07}.  Every domain that is a juxtaposition of a pentagon $p$ and rectangle $r$ decomposes in exactly two ways, and thus contributes an even number of times to the count of $\dmi \circ \, \pinchmap_X(\x) + \pinchmap_X \,\circ \,\dpl(\x)$. In other words, the coefficient of any $\y\in S(\G_-)$ in $\dmi \circ \, \pinchmap_X(\x) + \pinchmap_X \,\circ \,\dpl(\x)$ is zero.

\begin{figure}[ht]
  \vspace{.2cm}
  \labellist
  \pinlabel $x_1$ at 39 44
  \pinlabel $x_2$ at 110 76
  \pinlabel $y_1$ at 39 76
  \pinlabel $y_2$ at 110 44
  \pinlabel $*$ at 15 80
  \pinlabel $*$ at 15 55
  \pinlabel $1$ at 15 28
  \pinlabel $2$ at 52 28
  \pinlabel $3$ at 95 28
  \pinlabel $4$ at 115 28
  \pinlabel $5$ at 155 28
  \pinlabel $O$ at 175 47
  \pinlabel $*$ at 15 10
  \pinlabel $*$ at 140 80
  \pinlabel $*$ at 140 55
  \pinlabel $*$ at 140 10
  \endlabellist
  \includegraphics[scale=1]{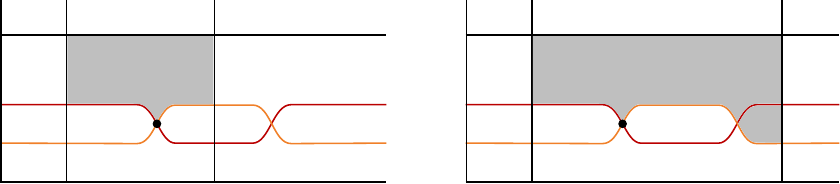}
  \caption{Two types of pentagons that contribute to the map $\pinchmap_X$. Below, we analyze how points in the various marked regions contribute to the combinatorial counts used to compute the gradings of the associated generators. }
  \label{fig:Xswap-pent}
\end{figure}

Next, we investigate how $\pinchmap_X$ interacts with the Maslov grading and the Alexander filtration. Suppose $p \in \ePent (\x, \y)$ is an empty pentagon whose domain includes the small pentagon above the intersection point $a$, as seen in \fullref{fig:combo}. After performing cyclic commutations if necessary, we may assume the $X$ swap is performed in the first and second rows, and the $O$ in the second row is in the last column; see \fullref{fig:Xswap-pent}. This allows for a simpler discussion of the gradings computation. Below, we assume that the right edge of $p$ is to the left of the second intersection point of $\alpha$ and $\alpha'$; see the left of \fullref{fig:Xswap-pent}. The other case is analogous.

Observe that if $q$ is a point in the interior of the starred regions in \fullref{fig:Xswap-pent}, then  $\J(\{q\},\x) = \J(\{q\},\y)$, where the left hand side is computed on $\G_+$ and the right hand side on $\G_-$. Next, exactly one of the regions labeled with $1$ and $2$ contains a marking, which is an $X$. Call this marking $X_1$ and note that $\J(\{X_1\},\{x_1\}) = \J(\{X_1\},\{y_1\})$, while $\J(\{X_1\},\{x_2\}) = 1/2$ and $\J(\{X_1\},\{y_2\})=0$. So $X_1$ contributes $1/2$ to the count $\J(\x, \X)-\J(\y, \X)$. Similarly, let $X_2$ be the unique marker in regions $3$ and $4$, and note that $X_2$ contributes $1/2$ to the count $\J(\x, \X)-\J(\y, \X)$. There is no marker in region $5$, whereas for any marker $q$ in the top unlabelled region, we have $\J(\{q\},\x) = \J(\{q\},\y)$. (Note that the bottom unlabelled region is in fact empty.) Finally, each point $q$ in the interior of $p$ contributes $1$ to $\J(\x, \{q\})- \J(\y, \{q\})$. 

The above paragraph implies that  $\J(\y, \X) -\J(\x, \X)=-1-\abs{\X\cap p}$ and $\J(\y, \OO) -\J(\x, \OO)=0$. Also, observe that $\J(\X, \X)$ decreases by one as we move from $\G_+$ to $\G_-$, whereas $\J(\OO, \OO)$ is unchanged, and $\J(\y, \y)-\J(\x, \x)=-1$. Thus, 
\begin{align*}
M(\y) - M(\x) &= -1,\\
A(\y) - A(\x) &= \abs{p \cap \XX} + \frac{\abs{\Legm} - \abs{\Legp} - 1}{2},
\end{align*}
where $\x$ is considered an element of the unshifted complex $\fGCt(\G_+)$. Since $\abs{p \cap \XX}\geq 0$, shifting the grading and filtration of this complex as in the statement of the lemma, we obtain a filtered chain homomorphism.
The case where $p$ is a pentagon below $a$ is analogous.

Finally, we compute the image of $\xpm (\grid_+)$ under $\pinchmap_X$. The proof of \cite[Lemma~3.3]{BLW22} observes that, if $X$'s and $O$'s are both blocked, there is a unique pentagon that carries $\xp (\grid_+)$ to $\xp (\grid_-)$, and similarly for $\xm$. We observe that, allowing $X$'s to be unblocked, we may get other pentagons, but such pentagons always contain at least one $X$ inside, meaning that the target generator necessarily belongs to a lower Alexander filtration level.
\end{proof}

\subsubsection{\texorpdfstring{$O$ swap}{O swap}}
\begin{lemma}\label{lem:oswap}
Suppose $\G_+$ is obtained from $\G_-$ by an $O$ swap. Then there exists a filtered chain homomorphism
\begin{equation*}
\pinchmap_{O} \colon \fGCt(\G_+) \to \fGCt(\G_-) \left\llbracket 1, \frac{\abs{\Legp} - \abs{\Legm} + 1}{2} \right\rrbracket
\end{equation*}
such that
\[
  \pinchmap_O(\xpm(\G_+))=\xpm(\G_-).
\]
\end{lemma}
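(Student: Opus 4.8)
The plan is to mimic the proof of \fullref{lem:xswap}, with the pair of swapped $O$'s now playing the role of the pair of swapped $X$'s there; the only place where the two arguments genuinely diverge will be the computation of the image of the canonical generators. First I would define $\pinchmap_O$ by a count of pentagons in the combined diagram of \fullref{fig:combo} (right), using the distinguished intersection point $b$ in place of $a$. For $\x \in S(\G_+)$ and $\y \in S(\G_-)$, let $\ePent(\x, \y)$ and $\ePent_{\OO}(\x, \y)$ be defined exactly as in \fullref{sec:commutation}, but with $a$ replaced by $b$, and set
\[
  \pinchmap_O(\x) = \sum_{\y \in S(\G_-)} \sum_{p \in \ePent_{\OO}(\x, \y)} \y.
\]
That $\pinchmap_O$ is a chain map is proved exactly as in \fullref{lem:xswap} and \cite[Lemma~3.1]{MOST07}: every domain that is a juxtaposition of a pentagon counted by $\pinchmap_O$ with a rectangle counted by $\dmi$ or $\dpl$ decomposes in precisely two ways, so the coefficient of each $\y$ in $\dmi \circ \pinchmap_O(\x) + \pinchmap_O \circ \dpl(\x)$ vanishes.

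Next I would redo the grading-and-filtration bookkeeping from the proof of \fullref{lem:xswap} almost verbatim; the only change is that now it is $\J(\OO, \OO)$, rather than $\J(\XX, \XX)$, that drops by one in passing from $\G_+$ to $\G_-$, while $\J(\XX, \XX)$ is unchanged. Tracking the contributions of the marked regions around $b$ exactly as before --- and using that a pentagon counted by $\pinchmap_O$ avoids all $O$'s, hence in particular both swapped markings, though it may still contain $X$'s --- one finds that every $p \in \ePent_{\OO}(\x, \y)$ drops the Maslov grading by one and drops the Alexander level by at least $(\abs{\Legp} - \abs{\Legm} + 1)/2$, with equality precisely when the interior of $p$ contains no $X$. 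Hence, after shifting $\fGCt(\G_-)$ by $\llbracket 1, (\abs{\Legp} - \abs{\Legm} + 1)/2 \rrbracket$ as in the statement, $\pinchmap_O$ is a filtered chain homomorphism.

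Finally I would identify $\pinchmap_O(\xpm(\G_+))$. By the proof of \cite[Lemma~3.3]{BLW22}, when both $X$'s and $O$'s are blocked there is a unique pentagon emanating from $\xpm(\G_+)$, and it runs to $\xpm(\G_-)$ and contains no markings at all. The crux of the lemma --- and the one point at which the argument leaves the template of \fullref{lem:xswap} --- is to check that unblocking the $X$'s creates no additional pentagons out of $\xpm(\G_+)$. This is precisely where the fact that we have an $O$ swap is used: any pentagon properly enlarging the unique small one would have to sweep across one of the two markings being swapped, but those markings are $O$'s, and a pentagon counted by $\pinchmap_O$ cannot contain an $O$. (For an $X$ swap the corresponding enlargements are permitted, and that is exactly what produces the correction term $\y$ in \fullref{lem:xswap}.) Therefore $\pinchmap_O(\xpm(\G_+)) = \xpm(\G_-)$, with no lower-filtration correction, as asserted. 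I expect this last verification to be the main obstacle: one must inspect the combined diagram near $b$ carefully enough to be certain that every marking that could bound an enlargement of the distinguished pentagon is one of the swapped $O$'s. The grading computation, though lengthy, is a purely mechanical transcription of the one in \fullref{lem:xswap}.
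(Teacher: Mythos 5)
The proposal rests on a misidentification of the relevant domains: you use a pentagon count, whereas the paper's map $\pinchmap_O$ is a \emph{triangle} count, and this is not an interchangeable choice. For an $O$ swap the two grid diagrams $\G_+$ and $\G_-$ have the same set of $X$ markings --- only the $O$'s are swapped --- so the canonical generators $\xpm(\G_+)$ and $\xpm(\G_-)$ agree at $m-1$ of the $m$ coordinates, differing only in the one coordinate on $\alpha$ versus $\alpha'$. A pentagon in the combined diagram, as defined in \fullref{sec:commutation}, connects $\x$ and $\y$ agreeing at exactly $m-2$ points; consequently there is no pentagon at all from $\xpm(\G_+)$ to $\xpm(\G_-)$, and the map you define cannot send $\xpm(\G_+)$ to $\xpm(\G_-)$ as asserted. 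The paper instead counts triangles through $b$, with vertices $x_j \in \alpha$, $y_j \in \alpha'$, and $b$, connecting generators agreeing at $m-1$ points; since such a triangle lies in the bigon region between $\alpha$ and $\alpha'$, which in the $O$-swap combined diagram contains only $O$'s, \emph{no triangle can contain an $X$}, and this is what gives the clean conclusion $\pinchmap_O(\xpm(\G_+)) = \xpm(\G_-)$ with no lower-filtration correction.

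The analogy you drew with \fullref{lem:xswap} breaks down at precisely this point: in an $X$ swap the $X$'s themselves move, so $\xpm(\G_+)$ and $\xpm(\G_-)$ differ at two coordinates and pentagons are the correct objects. Your citation of \cite[Lemma~3.3]{BLW22} is also misapplied --- that lemma concerns the $X$ swap pentagon count; the $O$ swap map is set up in \cite[Theorems~3.7 and~3.8]{BLW22}, already as a triangle count. Likewise the chain-map argument cannot simply invoke \cite[Lemma~3.1]{MOST07}: the correct argument pairs triangles with rectangles rather than pentagons with rectangles (cf.\ the citation to \cite[Lemma~3.4]{Won17}).
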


\begin{proof}
Given $\x \in S(\G_+)$ and $\y\in S(\G_-)$, let $\Tri(\x,\y)$ be the set of 
triangles in the combined diagram whose vertices are points in $(\x\triangle 
\y) \cup \{b\}$, with the following conditions.
First, $\Tri(\x,\y)$ is empty unless $\x$ and $\y$ agree at $m-1$ points. An element $p \in \Tri (\x, \y)$ is an embedded triangle with non-reflex angles whose boundary lies on the horizontal and vertical circles (including $\alpha$ and $\alpha'$) and whose vertices are points in $(\x \triangle \y) \union \set{b}$, such that $\bdy (\bdy p \cap \betas) = \x - \y$.
Note that a triangle is automatically empty, in the sense that $\Int(p) \cap \x = \emptyset$. Let $\Tri_\OO (\x, \y)$ be the subset of $p \in \Tri(\x, \y)$ such that $t \cap \OO = \emptyset$. See \cite[Figure~4]{Won17} for some examples of triangle domains in a similar context.
Define
\[
  \pinchmap_O \colon \fGCt(\G_+)\rightarrow \fGCt(\G_-)
\]
to be the linear map that counts 
these triangles: \[ \pinchmap_O (\x)= \sum \limits_{\y \in S(\G_-)} \sum 
  \limits_{t\in \Tri_\OO(\x,\y)} \y.\]

The proof that $\pinchmap_O$ is a chain map is analogous to the proof for $\pinchmap_X$. This time, we consider concatenations of rectangles and triangles, rather than rectangles and pentagons;  see, for example, \cite[Lemma~3.4]{Won17} for details. The proof that $\pinchmap_O$, with the shifts in the statement of the lemma, respects the Maslov and Alexander filtration, is also a direct computation similar to the proof for $\pinchmap_X$. Finally, by \cite[Theorems~3.7 and 3.8]{BLW22}, we know that when $X$'s and $O$'s are both blocked, the image of $\xpm(\G_+)$ under $\pinchmap_O$ is $\xpm(\G_-)$. We now allow $X$'s to be unblocked, but in fact, no $X$'s can be in a triangle! Thus our proof is complete.
\end{proof}

\subsection{Birth moves}\label{ssec:birth}
\begin{lemma}\label{lem:birth}
Suppose $\G_+$ is obtained from $\G_-$ by a birth move. Then there exists a filtered chain homomorphism
\[
\birthmap\colon \fGCt(\G_+) \to  \fGCt(\G_-) \llbracket -1, 0 \rrbracket
\]
such that 
\[
  \birthmap(\xpm(\G_+))=\xpm(\G_-).
\]
\end{lemma}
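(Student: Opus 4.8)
The plan is to realize the birth on the grid level as a disjoint union with the minimal unknot grid, and then to produce $\birthmap$ as the identity on $\fGCt(\G_-)$ tensored with an augmentation of the unknot factor. First I would fix a convenient model: it suffices to treat the case $\G_+ = \G_- \disjunion \G_0$, the disjoint union of grid diagrams in which $\G_-$ and $\G_0$ occupy complementary blocks of rows and columns, with $\G_0$ a fixed $2 \times 2$ grid diagram for the maximal-$\tb$ (undestabilizable) Legendrian unknot; any other grid presenting the birth differs from this one by commutations and (de)stabilizations of the allowed types, already handled in \fullref{sec:filt-def}. For such a $\G_+$ the link $\Leg(\G_+)$ is the split union of $\Leg(\G_-)$ with a small undestabilizable unknot, so the elementary cobordism attached to it is the birth, and $\abs{\Legp} = \abs{\Legm} + 1$ with $\chi(L) = 1$.

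The heart of the argument is to identify $\fGCt(\G_+)$, as a filtered chain complex, with $\bigl( \fGCt(\G_-) \otimes \fGCt(\G_0) \bigr) \llbracket -1, 0 \rrbracket$, where the tensor product of filtered complexes carries the filtration $\filt_k = \sum_{i+j=k} \filt_i \otimes \filt_j$ and the sum of the Maslov gradings. On generators a grid state of $\G_+$ is uniquely a pair $\x \times u$ with $\x \in S(\G_-)$ and $u \in S(\G_0)$; using the block structure one checks that the functions $M$ and $A$ on $\G_+$ equal the sums of the corresponding functions on $\G_-$ and $\G_0$ up to a global additive constant (the cross terms in the $\J$-counts being independent of $\x$ and $u$), and the resulting shift is pinned down to $\llbracket -1, 0 \rrbracket$ by evaluating on the canonical generators, using $\tb(\Legp) = \tb(\Legm) - 1$ and $\rot(\Legp) = \rot(\Legm)$. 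The key geometric input is the differential identity $\widetilde\bdy_{\G_+}(\x \times u) = (\widetilde\bdy_{\G_-}\x) \times u + \x \times (\widetilde\bdy_{\G_0} u)$: since the new rows and columns are consecutive and carry none of $\G_-$'s markings, any empty $O$-free rectangle in $\G_+$ either avoids them — hence is an empty $O$-free rectangle of $\G_-$ changing only the $\x$-coordinate — or has all four corners at the intersection points of the $\G_0$ block, hence lies inside that block (its complementary rectangle sweeps across every $O$ of $\G_-$ and is blocked) and is an empty $O$-free rectangle of $\G_0$ changing only the $u$-coordinate. Establishing this dichotomy cleanly, which is essentially the standard fact that grid homology of a split diagram is a tensor product of the pieces, is the step I expect to require the most care; alternatively one could quote the corresponding statement in \cite{OSS15}.

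Granting the identification, $\fGCt(\G_0)$ is two-dimensional with generators $u_* := \xp(\G_0) = \xm(\G_0)$ in bidegree $(0,0)$ and $u'$ in bidegree $(-1,-1)$ — the bidegrees of the generators of $W \isom \GHt(\G_0)$ — and by \cite{OST08} the element $\xpm(\G_0)$ is a cycle in the associated graded representing the nonzero unknot GRID invariant (in fact $\widetilde\bdy_{\G_0} = 0$, but this is not needed). I would define $\epsilon \colon \fGCt(\G_0) \to \Ftwo$, with $\Ftwo$ regarded as a filtered complex concentrated in bidegree $(0,0)$ at filtration level $0$, by $\epsilon(u_*) = 1$ and $\epsilon(u') = 0$; this $\epsilon$ preserves the Maslov grading and the filtration, and it is a chain map because $\widetilde\bdy_{\G_0} u_*$ is a multiple of $u'$ by the Maslov grading, hence in $\ker \epsilon$, while $\widetilde\bdy_{\G_0} u' = 0$ for grading reasons. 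Then I would set $\birthmap := \id_{\fGCt(\G_-)} \otimes\, \epsilon$, which under the identification of the previous paragraph is a filtered chain homomorphism $\fGCt(\G_+) \to \fGCt(\G_-) \llbracket -1, 0 \rrbracket$; note $\llbracket -1, 0 \rrbracket = \llbracket -\chi(L), (\abs{\Legp} - \abs{\Legm} - \chi(L))/2 \rrbracket$ for a cup, consistently with \fullref{thm:functoriality}.

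Finally it remains to evaluate $\birthmap$ on the canonical generators. Since the $X$-markings of $\G_+$ are exactly those of $\G_-$ together with those of $\G_0$, the canonical generators factor as $\xpm(\G_+) = \xpm(\G_-) \times \xpm(\G_0) = \xpm(\G_-) \times u_*$, and therefore $\birthmap(\xpm(\G_+)) = \xpm(\G_-) \cdot \epsilon(u_*) = \xpm(\G_-)$, which is the claim. Because $\birthmap$ carries $\xpm(\G_+)$ to $\xpm(\G_-)$ exactly, with no lower-filtration error term, \fullref{lem:filt-map-ss-el} then gives $E^i(\birthmap)(\lambdatpm_i(\G_+)) = \lambdatpm_i(\G_-)$ for all relevant $i$, which is precisely what is needed to feed \fullref{lem:birth} into the proof of \fullref{thm:functoriality}.
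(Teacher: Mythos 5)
The central step of your argument---identifying $\fGCt(\G_+)$ as a filtered chain complex with $\fGCt(\G_-) \otimes \fGCt(\G_0)$ for a block-diagonal grid $\G_+ = \G_- \disjunion \G_0$---does not hold, and this is where the proposal breaks down. A grid state of $\G_+$ is an arbitrary bijection from the $(m+2)$ rows to the $(m+2)$ columns; nothing forces it to send the $m$ rows of the $\G_-$ block to the $m$ columns of the $\G_-$ block. For instance, in the $4\times 4$ block-diagonal example with $m=2$, the state placing points at $(1,3),(2,4),(3,1),(4,2)$ is a perfectly good generator of $S(\G_+)$ that does not factor as $\x \times u$. So $S(\G_+)$ is strictly larger than $S(\G_-)\times S(\G_0)$, and your claimed dichotomy for $O$-free rectangles only applies to the subset of block-respecting states. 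The extra generators, together with the rectangles wrapping through both blocks, contribute to $\widetilde\bdy_{\G_+}$ in ways your decomposition does not see, and for those extra states the ``cross terms'' in $\J(\x,\OO)$ and $\J(\x,\XX)$ are no longer state-independent constants, so the claimed additivity of $M$ and $A$ also fails. I am not aware of a chain-level statement in \cite{OSS15} that gives $\fGCt$ of a block-diagonal grid as a tensor product; the split-link statements there are at the level of homology and come with extra $W$-factors and a more delicate argument, so quoting such a result will not rescue the step.

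The paper's proof sidesteps all of this by not using a block-diagonal grid. It commutes rows and columns so that the new $2\times 2$ square sits directly to the bottom right of an existing $O$ of $\G_-$ (so $\G_+$ is \emph{not} a disjoint block union), then introduces the two new intersection points $a, b$ and partitions $S(\G_+)$ into $\AB \disjunion \AN \disjunion \NB \disjunion \NN$ according to which of $a, b$ lie in the state. One checks that $\NN \subset \NB \dirsum \NN \subset \NN \dirsum \NB \dirsum \AN \subset \fGCt(\G_+)$ is a filtration by subcomplexes, so $\widetilde{\AB}$ is a quotient complex, isomorphic to $\fGCt(\G_-)$ via dropping $a,b$. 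Then $\birthmap = e \comp \psi \comp \Pi$, where $\Pi$ is projection onto $\widetilde{\NB}$, $\psi$ counts certain rectangles from $\NB$-states to $\AB$-states containing $O_2, O_3$ and at least $X_2, X_3$, and $e$ is the identification with $\fGCt(\G_-)$. The evaluation $\birthmap(\xpm(\G_+)) = \xpm(\G_-)$ then comes from a concrete count of a single rectangle, not from an abstract tensor-factor augmentation. If you want to pursue the tensor-product intuition, you would first need to prove (or locate) a genuine chain-level splitting result for block-diagonal grids, which is a substantial missing ingredient; as written, the argument does not establish the lemma.
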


\begin{proof}
Our strategy is to extend the birth map in \cite[Proposition~3.9]{BLW22} to allow rectangles that contain $X$'s. For completeness, we repeat the set up in \cite{BLW22} below.

Through row and column commutations, there exists a diagram such that the birth 
occurs directly to the bottom right of an $O$. Then, if we define the points 
$a$ and $b$ as shown in \fullref{fig: BirthDiag}, we can decompose $S(\G_+)$ into 
the disjoint union $\AB \sqcup \AN \sqcup \NB \sqcup \NN$ where
\begin{itemize}
    \item $\AB$ consists of $\x \in \SGp$ with $a,b\in \x$;
    \item $\AN$ consists of $\x \in \SGp$ with $a \in \x$ and $b \notin \x$;
    \item $\NB$ consists of $\x \in \SGp$ with $a \notin \x$ and $b \in \x$; and
    \item $\NN$ consists of $\x \in \SGp$ with $a,b \notin \x$.
\end{itemize}
\begin{figure}[ht]
  \vspace{.2cm}
  \labellist
  \pinlabel $O_1$ at 10 45
  \pinlabel $\beta_1$ at 20 62
  \pinlabel $\alpha_3$ at 47 38
  \pinlabel $\beta_1$ at 100 82
  \pinlabel $\beta_2$ at 120 82
  \pinlabel $\beta_3$ at 140 82
  \pinlabel $\alpha_3$ at 166 59
  \pinlabel $\alpha_2$ at 166 39
  \pinlabel $\alpha_1$ at 166 18
  \pinlabel $O_1$ at 90 68
  \pinlabel $O_2$ at 110 47
  \pinlabel $O_3$ at 132 25
  \pinlabel $X_2$ at 110 25
  \pinlabel $X_3$ at 132 47
  \pinlabel {\small{$a$}} at 103 54
  \pinlabel {\small{$b$}} at 123 32
  \endlabellist
  \includegraphics[scale=1]{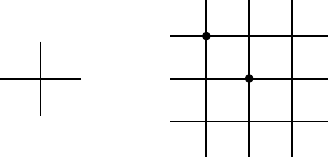}
  \caption{Left: Part of the grid diagram $\G_-$. Right: The corresponding part of the diagram $\G_+$ obtained from $\G_-$ by a birth move.}
  \label{fig: BirthDiag}
\end{figure}

This induces a decomposition of the vector space $\fGCt(\G_+)$ as a direct sum, 
\[\fGCt(\G_+) = \widetilde{\AB} \oplus \widetilde {\AN} \oplus \widetilde{\NB} \oplus \widetilde{\NN}\]
where the summands are the subspaces generated by the corresponding subsets. Note that we have a sequence of subcomplexes
\[\NN \subset \NB \oplus \NN \subset \NN \oplus \NB \oplus \AN \subset \fGCt(\G_+),\]
since no rectangle contributing to the differential can end at $a$ or at $b$.
Let $(\widetilde{\AB},\widetilde{\delta}_{\AB})$ be the quotient complex of 
$\fGCt(G_+)$ by $\NN \oplus \NB \oplus \AN$. There is a natural bijection of 
the generators in $\AB$ and the generators in $S(\G_-)$ given by $\x\mapsto 
\x'\coloneqq \x \setminus \{a,b\}$. This map extends linearly to an isomorphism 
$e\colon \widetilde{\AB} \rightarrow \fGCt(\G_-)$ of (not necessarily filtered) 
chain complexes.

For any $\x,\y \in \AB$, there is a bijection of empty rectangles $r \in \eRect (\x, \y)$ not containing $O$'s in $\widetilde{\AB}$ and empty rectangles 
$r' \in \eRect (\x', \y')$ not containing $O$'s in $\G_-$, since any rectangle that contains the $2 \times 2$ square where the new unknot is must necessarily contain an $O$.
By \eqref{eq:maslov-gr-rel} and \eqref{eq:alex-filt-rel}, this bijection shows
that $e$ respects the Alexander filtration and is homogeneous.

Let $\OO$ and $\XX$ denote the sets of $O$ and $X$ markers in $\grid_+$ respectively. Now, for $\x\in \NB$ and $\y\in \AB$, let 
\[\Rect_{\AB}(\x,\y) \subset \Rect_{\G_+}(\x,\y)\]
be the subset of rectangles $p$ that satisfy \begin{itemize}
    \item $p \cap \OO = \{ O_2,O_3\}$
    \item $p \cap \X \supseteq \{X_2, X_3\} $
    \item $\Int(p) \cap \x=\Int(p) \cap \y =\{b\}.$
\end{itemize}
The second bullet item here is the key difference from \cite[Proposition~3.9]{BLW22}, which requires an equality instead of an inclusion.
Let $\psi$ be the linear map defined on generators by counting such rectangles: \[
\psi(\x)= \sum \limits_{\y \in \AB} \sum \limits_{p \in \Rect_{\AB}(\x,\y)} \y.\]

Let $\Pi \colon \fGCt(\G_+)\rightarrow \widetilde{\NB}$ be the projection onto 
the summand $\widetilde{\NB}$. Finally, let $\birthmap$ be the linear map 
defined by the composition: \[\birthmap= e \circ \psi \circ \Pi.\]

First, we show that $\birthmap(\xpm(\G_+))=\xpm(\G_-)$. Note that $\xpm (\G_+) 
\in \NB$, so $\Pi (\xpm (\grid_+))=\xpm (\grid_+)$. Thus, $\birthmap(\xpm 
(\grid_+))=e\circ \psi (\xpm (\grid_+))$.  As shown in \fullref{fig: BX+}, 
there is a unique rectangle of the type that defines $\psi$ that starts at 
$\xpm (\grid_+)$; composing with $e$, we see that 
$\birthmap(\xpm(\G_+))=\xpm(\G_-)$.
\begin{figure}[ht]
        \labellist
    \pinlabel $O_1$ at 6 71
    \pinlabel $O_2$ at 24 53
    \pinlabel $O_3$ at 43 34
    \pinlabel $X_2$ at 24 34
    \pinlabel $X_3$ at 43 53
    \pinlabel $X_1$ at 6 -3
    \pinlabel $X_1$ at 145 18
    \pinlabel $O_1$ at 145 51
    \pinlabel $O_1$ at 225 63
    \pinlabel $O_3$ at 262 27
    \pinlabel $O_2$ at 243 45
    \pinlabel $X_2$ at 262 45
    \pinlabel $X_3$ at 243 27
    \pinlabel $X_1$ at 298 63
    \pinlabel $O_1$ at 382 45
    \pinlabel $X_1$ at 414 45
    \pinlabel $e$ at 96 45
    \pinlabel $e$ at 335 45
    \pinlabel $\grid_+$ at 33 -15
    \pinlabel $\grid_-$ at 150 -15
    \pinlabel $\grid_+$ at 262 -15
    \pinlabel $\grid_-$ at 401 -15
    \endlabellist
    \vspace{.6cm}
  \includegraphics[scale=1]{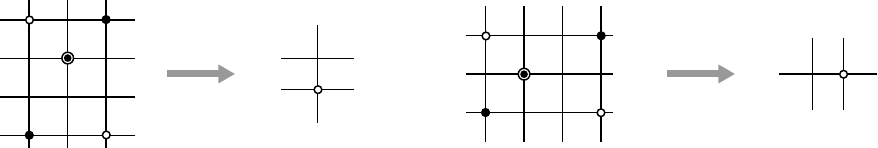}
  \vspace{.6cm}
  \caption{Left: In $\grid_+$, the generator $\xp (\grid_+)$ (solid circles) and the unique generator $\y$ (hollow circles) for which $\Rect_{\mathit{AB}} (\xp (\grid_+), \y)$ is non-empty. There is a single rectangle $p \in \Rect_{\mathit{AB}} (\xp (\grid_+), \y)$, showing that $\psi (\xp (\grid_+)) = \y$. The corresponding generator $\birthmap (\xp (\grid_+)) = \xp (\grid_-)$ is shown in $\grid_-$. Right: The analogous analysis for $\xm$.}
  \label{fig: BX+}
\end{figure}

The proof that $\birthmap$ is a chain map is, almost \emph{verbatim}, the same as that of \cite[Lemma~3.10]{BLW22}. Turning to the Maslov grading and Alexander filtration, first note that $e$ and $\Pi$ are both homogeneous with respect to both the Maslov and Alexander functions. From the definition of $\psi$, it is clear that $\psi$ is homogeneous with respect to the Maslov grading; thus, one may use \eqref{eq:xpm grad} together with the fact that $\Legp$ is the disjoint union of $\Legm$ with an unknot with $\tb = -1$ and $\rot = 0$, to compute the Maslov grading shift to be $1$. Finally, we can use \eqref{eq:alex-filt-rel} to compute the Alexander filtration shift of a rectangle $p \in \Rect_{\mathit{AB}} (\x, \y)$ to be $- \# (r \cap \XX)$. Temporarily blocking the $X$'s in $\psi$ except $X_2$ and $X_3$, \cite[Lemma~3.12]{BLW22} shows that $\birthmap$ is homogeneous with respect to, and in fact preserves, the Alexander function. Unblock the $X$'s now; combining the facts above, we see that if $\y'$ appears as a term in $\birthmap (\x)$ via a rectangle $p \in \Rect_{\mathit{AB}} (\x, \y)$, then
\[
  A (\y) - A (\x) = - \# (p \cap \XX) + 2 \leq 0,
\]
with equality when $p$ does not contain any $X$'s, which is indeed attained. This shows that $\birthmap$ is a filtered chain map with no filtration level shift.
\end{proof}

\subsection{Proof of the weak functoriality and obstruction}
\label{ssec:cob-combined}

\begin{proof}[Proof of \fullref{thm:functoriality}]
  The result is obtained by applying \fullref{lem:filt-map-ss-el} on the filtered chain homomorphisms in \fullref{lem:xswap}, \fullref{lem:oswap}, \fullref{lem:birth} to obtain the desired maps for pinches and births, and using \fullref{thm:leg} to obtain maps for Legendrian isotopies. As a decomposable Lagrangian cobordism is made up of these pieces, the associated map can be defined by composing the maps above. For more details, see \cite[proof of Theorem~1.5]{BLW22}.
\end{proof}

\begin{proof}[Proof of \fullref{thm:cob}]
  This is immediate from \fullref{thm:functoriality} and \fullref{lem:filt-map-ss-el}; in particular, the existence of the map in \fullref{thm:functoriality} shows that $\npm (\Legm) \geq \npm (\Legp)$.
\end{proof}

\begin{proof}[Proof of \fullref{cor:filling}]
  A decomposable Lagrangian filling of $\Leg$ is the concatenation of a Lagrangian birth and a decomposable Lagrangian cobordism from the undestabilizable Legendrian unknot $\Leg_U$ to $\Leg$. Noting that $\npm (\Leg_U) = \infty$ and $\lambdatpm_i (\Leg_U) \neq 0$ for all $i \in \Z_{\geq 1}$, the corollary follows.
\end{proof}

\section{Computations}
\label{sec:computation}

While our invariants have nice definitions in terms of the classes of $\xpm(\G)$ in sub-quotients of our complex, for computational purposes we would like to rephrase these definitions in terms of a particular kind of sub-quotient: the homology of a complex. Essentially, given a grid diagram $\G$ and an integer $r \ge 1$, we would like to be able to answer the questions
\begin{enumerate}
    \item Is $\lambdatpm_r(\G)$ well-defined?
    \item Is $\lambdatpm_r(\G) = 0$ for a given $r$?
\end{enumerate}
by computing the class of some element in the homology of some complex. This 
would allow us to use the techniques of \cite{NOT08} to compute the homology 
class of such an element by doing local searches and reductions to make 
computations more efficient and feasible for larger knots.

We will tackle the second question first. It turns out that the question of whether $\lambdatpm_r$ is zero or not is equivalent to the question of whether or not $\xpm$ is null-homologous in a particular sub-quotient complex.

\begin{proposition}\label{prop:nullhomologous}
Given $x \in \filt_p C$ with $\diff x \in \filt_{p-1}$ and $r \in \Z_{\ge 0}$, we have that $[x]^r = 0 \in E^r$ if and only if $[x] = 0 \in H_*(\filt_{p+r-1} C / \filt_{p-1} C)$.
\end{proposition}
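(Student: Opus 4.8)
The plan is to translate each of the two conditions into a membership statement for $x$ in an explicit submodule of $C$, using the formulas for $Z^r_p$ and $B^r_p$ recorded in \fullref{ssec:filt}, and then to observe that the two resulting conditions coincide. Throughout I take $r \ge 1$, which is the range of interest (and the range in which the right-hand side is literally meaningful); I write $p$ for the filtration level appearing in the hypothesis, so that $x \in \filt_p C \subseteq \filt_{p+r-1} C$.

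First I would unwind the left-hand side. Since $E^r_p = Z^r_p / B^r_p$ and $B^r_p \subseteq Z^r_p$, the assertion $[x]^r = 0 \in E^r$ is equivalent to $x + \filt_{p-1} C \in B^r_p$; the inclusion $B^r_p \subseteq Z^r_p$ makes the well-definedness of $[x]^r$ automatic on the vanishing locus, so there is no separate hypothesis to check there. By the displayed formula for $B^r_p$, this in turn is equivalent to
\[
  x \in \bigl( \filt_p C \cap \diff(\filt_{p+r-1} C) \bigr) + \filt_{p-1} C.
\]
Next I would unwind the right-hand side. The hypothesis $\diff x \in \filt_{p-1} C$ is exactly the statement that $x$ descends to a cycle in the quotient complex $\filt_{p+r-1} C / \filt_{p-1} C$, so $[x] = 0 \in H_*(\filt_{p+r-1} C / \filt_{p-1} C)$ is equivalent to
\[
  x \in \diff(\filt_{p+r-1} C) + \filt_{p-1} C.
\]

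It then remains to see that these two membership conditions agree. One inclusion is immediate from $\filt_p C \cap \diff(\filt_{p+r-1} C) \subseteq \diff(\filt_{p+r-1} C)$. For the other, suppose $x = \diff y + z$ with $y \in \filt_{p+r-1} C$ and $z \in \filt_{p-1} C$; the single point that requires any thought is that $\diff y = x - z$ is then a difference of two elements of $\filt_p C$ (here using $\filt_{p-1} C \subseteq \filt_p C$), hence $\diff y \in \filt_p C \cap \diff(\filt_{p+r-1} C)$, so the same expression $x = \diff y + z$ already exhibits the stronger membership. I do not expect a genuine obstacle here: the argument is entirely a matter of matching the sub-quotient bookkeeping, and the only thing worth stating carefully in the write-up is the reduction of ``$[x]^r = 0$'' to ``$x + \filt_{p-1} C \in B^r_p$'', together with the observation that $B^r_p \subseteq Z^r_p$.
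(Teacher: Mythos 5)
Your argument is correct and is essentially the paper's proof reorganized: where the paper argues the two implications directly (rearranging $x + z = \diff y$ in one direction and $x = \diff y + z$ in the other), you first translate both conditions into the single membership statement $x \in \diff(\filt_{p+r-1} C) + \filt_{p-1} C$ and then observe that the extra intersection with $\filt_p C$ appearing in $B^r_p$ is automatic because $\diff y = x - z \in \filt_p C$. The content is identical; your phrasing just makes it a bit more visible that the inclusion $B^r_p \subseteq Z^r_p$ takes care of well-definedness. Your restriction to $r \ge 1$ is also reasonable (and arguably tidies up a small imprecision in the paper's $r \in \Z_{\ge 0}$, since for $r = 0$ the right-hand homology group is that of the zero complex and the equivalence degenerates).
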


\begin{proof}
To start, assume $[x]^r = 0 \in E^r$. Therefore, $[x]^r \in B^r_p$, so we have that $x + z = \diff y$, where $y \in \filt_{p+r-1} C$ and $z \in \filt_{p-1} C$. Therefore, $\diff y = x + z \equiv x \pmod{\filt_{p-1} C}$, so $\diff [y] = [x] \in \filt_{p+r-1} C / \filt_{p-1} C$, thus $[x] = 0 \in H_*(\filt_{p+r-1} C / \filt_{p-1} C)$. This proves one direction.

To prove the other implication, assume $[x] = 0 \in H_*(\filt_{p+r-1} C / \filt_{p-1} C)$. Then $x = \diff y + z$, for $y \in \filt_{p+r-1} C$ and $z \in \filt_{p-1} C$. Therefore, $x \in \filt_p C \cap \diff(\filt_{p+r-1} C) + \filt_{p-1}$, thus $[x]^r \in B^r_p$ and $[x]^r = 0 \in E^r_p$.
\end{proof}

Now, we will attempt to answer the first question in a similar way.

\begin{proposition}\label{prop:cycle}
Given $x \in \filt_p C$ with $\diff x \in \filt_{p-1}$ and  $r \in \Z_{\ge 0}$, we have that $d^r [x]^r = 0 \in E^r$ if and only if $[\diff x] = 0 \in H_*(\filt_{p-1} C / \filt_{p-r-1} C)$.
\end{proposition}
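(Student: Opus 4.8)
The plan is to unwind the definitions of $E^r_p$ and the induced differential $d^r$, exactly as was done in the proof of Proposition \ref{prop:nullhomologous}, and translate the condition $d^r[x]^r = 0$ into a statement about $\diff x$ being a boundary in the sub-quotient complex $\filt_{p-1} C / \filt_{p-r-1} C$. Recall from the setup in Section \ref{ssec:filt} that $\ker d^r_p = \diff^{-1}(B^r_{p-r}) \cap Z^r_p \cong Z^{r+1}_p$, so $d^r[x]^r = 0$ precisely when the image of $\diff x$ in $E^r_{p-r}$ lies in $B^r_{p-r}$. Since $x \in \filt_p C$ with $\diff x \in \filt_{p-1} C$, the element $\diff x$ has filtration level at most $p-1$, so I must be slightly careful: its class in $E^r_{p-r}$ is the relevant thing only after noting that $d^r$ is induced by $\diff$ on the sub-quotient description of $E^r$, and that $[x]^r$ being well-defined already uses $\diff x \in \filt_{p-r} C + \filt_{p-1} C = \filt_{p-1} C$ for $r \geq 1$ (and trivially for $r = 0$). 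So the honest statement is that $d^r[x]^r = 0$ iff the class of $\diff x$ in $\filt_{p-1} C / \filt_{p-1-r} C$ (note the shift by $r$ from $p-1$, matching $B^r_{p-r}$'s denominator structure) is a boundary of something coming from $\filt_{p-1} C$.

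For the forward direction, I would assume $d^r[x]^r = 0$, so $\diff x \in B^r_{p-r}$ up to the appropriate sub-quotient identification; chasing the formula
\[
  B^r_{p-r}(\fC) \cong \frac{\filt_{p-r} C \cap \diff(\filt_{p-1} C) + \filt_{p-r-1} C}{\filt_{p-r-1} C},
\]
this means $\diff x = \diff y + z$ for some $y \in \filt_{p-1} C$ and $z \in \filt_{p-r-1} C$. Then $\diff(x - y) = z \in \filt_{p-r-1} C$, and $x - y \in \filt_p C$; but I actually want to conclude $[\diff x] = 0$ in $H_*(\filt_{p-1}C / \filt_{p-r-1}C)$, which follows directly: $\diff x \equiv \diff y \pmod{\filt_{p-r-1} C}$ with $y \in \filt_{p-1} C$, so $\diff x$ is the boundary of $[y]$ in that quotient complex. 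For the reverse direction, I would assume $[\diff x] = 0 \in H_*(\filt_{p-1} C / \filt_{p-r-1} C)$, so $\diff x = \diff y + z$ with $y \in \filt_{p-1} C$ and $z \in \filt_{p-r-1} C$; this exhibits $\diff x - z \in \diff(\filt_{p-1} C)$, and since $\diff x \in \filt_{p-1}C \subseteq \filt_{p-r}C$ (using $r \geq 0$), we get $\diff x \in \filt_{p-r} C \cap \diff(\filt_{p-1}C) + \filt_{p-r-1} C$, i.e.\ the class of $\diff x$ in $E^r_{p-r}$ lies in $B^r_{p-r}$, hence $d^r[x]^r = 0$.

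The main obstacle I anticipate is purely bookkeeping: keeping the indices on the $Z^r$, $B^r$ filtration-degree shifts consistent with the conventions fixed in Section \ref{ssec:filt} (in particular that $d^r \colon E^r_p \to E^r_{p-r}$, so the relevant $B$-group is $B^r_{p-r}$, whose denominator is $\filt_{p-r-1} C$, which is why $\filt_{p-r-1} C$ — and not $\filt_{p-1-r}C$ written differently, nor $\filt_{p-r}C$ — appears in the statement). I would also double-check the edge case $r = 0$, where $E^0_p = \gr_p(\fC) = \filt_p C / \filt_{p-1} C$ and $d^0$ is the induced differential; there the claim reduces to the tautology that $d^0[x]^0 = 0$ iff $[\diff x] = 0$ in $H_*(\filt_{p-1}C/\filt_{p-1}C) = 0$, i.e.\ always — consistent with the fact that $\diff x \in \filt_{p-1}C$ forces $d^0[x]^0 = [\diff x]^0 = 0$ automatically. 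Once the indexing is pinned down, the argument is a direct mirror of the proof of Proposition \ref{prop:nullhomologous}, so no genuinely new idea is required.
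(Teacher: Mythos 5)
Your proposal is correct and follows essentially the same approach as the paper: both reduce $d^r[x]^r = 0$ to the existence of a decomposition $\diff x = \diff y + z$ with $y \in \filt_{p-1} C$ and $z \in \filt_{p-r-1} C$, then observe this is exactly the statement $[\diff x] = 0 \in H_*(\filt_{p-1} C / \filt_{p-r-1} C)$. The only difference is cosmetic: you phrase the unwinding via $B^r_{p-r}$ while the paper phrases it via $Z^{r+1}_p$ (using the identity $\ker d^r_p \cong Z^{r+1}_p$ already recalled in Section~\ref{ssec:filt}), but these are equivalent and produce identical algebra.
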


\begin{proof}
To start, assume $d^r [x]^r = 0 \in E^r$. Therefore, $[x]^{r+1} \in Z^{r+1}_p$ is defined, so we know that $\diff(x + z) = y$ for some $y \in \filt_{p-r-1} C$ and $z \in \filt_{p-1} C$. Rearranging gives us that $\diff x = \diff z + y$, so $\diff x \equiv \diff z \pmod{\filt_{p-r-1} C}$, thus $[\diff x] = [\diff z] = 0 \in H_*(\filt_{p-1} C / \filt_{p-r-1} C)$.

To prove the other direction, assume $[\diff x] = 0 \in H_*(\filt_{p-1} C / \filt_{p-r-1} C)$. This means that $\diff x = \diff z + y$ for some $z \in \filt_{p-1} C$ and $y \in \filt_{p-r-1} C$. Rearranging gives us that $\diff(x + z) = y \in \filt_{p-r-1} C$, so $[x]^{r+1} \in Z^{r+1}_p$ is defined, thus $d^r [x]^r = 0 \in E^r$.
\end{proof}

Given a grid diagram $\G$, these two techniques allow us to efficiently compute our invariants $\npm(\G)$ and $\lambdatpm_i(\G)$ as follows:
\begin{enumerate}
    \item Let $r=1$. We know that $\lambdatpm_1(\G)$ is always well-defined.
    \item\label{item:repeat} Use \fullref{prop:nullhomologous} to check if 
      $\lambdatpm_r(\G) = 0$. If so, then we are done, and we can conclude that 
      $\npm(\G)=\infty$ and $\lambdatpm_i(\G) = 0$ for $i > r$.
    \item Otherwise, use \fullref{prop:cycle} to check if $d^r \lambdatpm_r(\G) = 0$. If not, then we are done, and we can conclude that $\npm(\G) = r$ and $\lambdatpm_i(\G)$ is undefined for $i > r$.
    \item If $d^r \lambdatpm_r(\G) = 0$, then $\lambdatpm_{r+1}(\G)$ is well-defined, so we 
      may increment $r$ by $1$ and repeat the process from \eqref{item:repeat}.
\end{enumerate}
This algorithm is implemented in \cite{JPSWW22:program}, which was used to produce the results in \fullref{ssec:intro-effectiveness}.

\bibliographystyle{mwamsalphack}
\bibliography{references}

\newcommand{\etalchar}[1]{$^{#1}$}
\providecommand{\bysame}{\leavevmode\hbox to3em{\hrulefill}\thinspace}
\providecommand{\MR}{\relax\ifhmode\unskip\space\fi MR }
\providecommand{\MRhref}[2]{%
  \href{http://www.ams.org/mathscinet-getitem?mr=#1}{#2}
}
\providecommand{\href}[2]{#2}
\begin{thebibliography}{KMVW23}

\bibitem[BC14]{BC14}
Fr\'{e}d\'{e}ric Bourgeois and Baptiste Chantraine, \emph{Bilinearized
  {L}egendrian contact homology and the augmentation category}, J. Symplectic
  Geom. \textbf{12} (2014), no.~3, 553--583. \MR{3248668}

\bibitem[BLW22]{BLW22}
John~A. Baldwin, Tye Lidman, and C.-M.~Michael Wong, \emph{Lagrangian
  cobordisms and {L}egendrian invariants in knot {F}loer homology}, Michigan
  Math. J. \textbf{71} (2022), no.~1, 145--175. \MR{4389674}

\bibitem[BS18]{BS18}
John~A. Baldwin and Steven Sivek, \emph{Invariants of {L}egendrian and
  transverse knots in monopole knot homology}, J. Symplectic Geom. \textbf{16}
  (2018), no.~4, 959--1000. \MR{3917725}

\bibitem[BS21]{BS21}
\bysame, \emph{On the equivalence of contact invariants in sutured {F}loer
  homology theories}, Geom. Topol. \textbf{25} (2021), no.~3, 1087--1164.
  \MR{4268162}

\bibitem[BST15]{BST15}
Fr\'{e}d\'{e}ric Bourgeois, Joshua~M. Sabloff, and Lisa Traynor,
  \emph{Lagrangian cobordisms via generating families: construction and
  geography}, Algebr. Geom. Topol. \textbf{15} (2015), no.~4, 2439--2477.
  \MR{3402346}

\bibitem[CDGG15]{CDGG15}
Baptiste Chantraine, Georgios Dimitroglou~Rizell, Paolo Ghiggini, and Roman
  Golovko, \emph{Floer homology and {L}agrangian concordance}, Proceedings of
  the {G}\"{o}kova {G}eometry-{T}opology {C}onference 2014, G\"{o}kova
  Geometry/Topology Conference (GGT), G\"{o}kova, 2015, pp.~76--113.
  \MR{3381440}

\bibitem[Cha10]{Cha10:Leg}
Baptiste Chantraine, \emph{Lagrangian concordance of {L}egendrian knots},
  Algebr. Geom. Topol. \textbf{10} (2010), no.~1, 63--85. \MR{2580429}

\bibitem[Cha12]{Cha12:dec}
\bysame, \emph{Some non-collarable slices of lagrangian surfaces}, Bulletin of
  the London Mathematical Society \textbf{44} (2012), no.~5, 981–987~pages.

\bibitem[Cha13]{Cha13:nondec}
Baptiste Chantraine, \emph{A note on exact lagrangian cobordisms with
  disconnected legendrian ends}, arXiv: Symplectic Geometry (2013), 1325--1331.

\bibitem[Cha15]{Cha15:NotSym}
Baptiste Chantraine, \emph{Lagrangian concordance is not a symmetric relation},
  Quantum Topol. \textbf{6} (2015), no.~3, 451--474. \MR{3392961}

\bibitem[Che02]{Che02}
Yuri Chekanov, \emph{Differential algebra of {L}egendrian links}, Invent. Math.
  \textbf{150} (2002), no.~3, 441--483. \MR{1946550}

\bibitem[CN13]{CN13atlas}
Wutichai Chongchitmate and Lenhard Ng, \emph{An atlas of {L}egendrian knots},
  Exp. Math. \textbf{22} (2013), no.~1, 26--37. \MR{3038780}

\bibitem[CNS16]{CNS16}
Christopher Cornwell, Lenhard Ng, and Steven Sivek, \emph{Obstructions to
  {L}agrangian concordance}, Algebr. Geom. Topol. \textbf{16} (2016), no.~2,
  797--824. \MR{3493408}

\bibitem[Cro95]{Cro95}
Peter~R. Cromwell, \emph{Embedding knots and links in an open book. {I}.
  {B}asic properties}, Topology Appl. \textbf{64} (1995), no.~1, 37--58.
  \MR{1339757}

\bibitem[DG24]{DG24}
Georgios Dimitroglou~Rizell and Roman Golovko, \emph{Instability of
  {L}egendrian knottedness, and non-regular {L}agrangian concordances of
  knots}, preprint, version 1, 2024,
  \href{https://arxiv.org/abs/2409.00290v1}{\texttt{arXiv:2409.00290v1}}.

\bibitem[Dim16]{Dim16}
Georgios Dimitroglou~Rizell, \emph{Legendrian ambient surgery and {L}egendrian
  contact homology}, J. Symplectic Geom. \textbf{14} (2016), no.~3, 811--901.
  \MR{3548486}

\bibitem[EGH00]{EGH00}
Y.~Eliashberg, A.~Givental, and H.~Hofer, \emph{Introduction to symplectic
  field theory}, Geom. Funct. Anal., Special Volume, Part II (2000), 560--673.
  \MR{1826267}

\bibitem[EHK16]{EHK16}
Tobias Ekholm, Ko~Honda, and Tam\'{a}s K\'{a}lm\'{a}n, \emph{Legendrian knots
  and exact {L}agrangian cobordisms}, J. Eur. Math. Soc. (JEMS) \textbf{18}
  (2016), no.~11, 2627--2689. \MR{3562353}

\bibitem[GJ19]{GJ19}
Marco Golla and Andr\'{a}s Juh\'{a}sz, \emph{Functoriality of the {EH} class
  and the {LOSS} invariant under {L}agrangian concordances}, Algebr. Geom.
  Topol. \textbf{19} (2019), no.~7, 3683--3699. \MR{4045364}

\bibitem[JPS{\etalchar{+}}24]{JPSWW22:program}
Mitchell Jubeir, Ina Petkova, Noah Schwartz, Zachary Winkeler, and
  C.-M.~Michael Wong, \emph{Filtered{GRID}}, available at
  \url{https://github.com/math-SHUR/FilteredGRID}, 2024.

\bibitem[KMVW23]{KMVW19}
\c~Ca\u~gatay Kutluhan, Gordana Mati\'c, Jeremy Van Horn-Morris, and Andy Wand,
  \emph{Filtering the {H}eegaard {F}loer contact invariant}, Geom. Topol.
  \textbf{27} (2023), no.~6, 2181--2236. \MR{4634746}

\bibitem[MOS09]{MOS09}
Ciprian Manolescu, Peter Ozsv{{\'a}}th, and Sucharit Sarkar, \emph{A
  combinatorial description of knot {F}loer homology}, Ann. of Math. (2)
  \textbf{169} (2009), no.~2, 633--660. \MR{2480614 (2009k:57047)}

\bibitem[MOSzT07]{MOST07}
Ciprian Manolescu, Peter Ozsv{{\'a}}th, Zolt{{\'a}}n Szab{{\'o}}, and Dylan
  Thurston, \emph{On combinatorial link {F}loer homology}, Geom. Topol.
  \textbf{11} (2007), 2339--2412. \MR{2372850 (2009c:57053)}

\bibitem[MOT24]{MOT20}
Ciprian Manolescu, Peter~S. Ozsv{{\'a}}th, and Dylan~P. Thurston, \emph{Grid
  diagrams and {H}eegaard {F}loer invariants}, {A}nn. of {M}ath. (2), to
  appear, 2024,
  \href{https://arxiv.org/abs/0910.0078v4}{\texttt{arXiv:0910.0078v4}}.

\bibitem[MQR{\etalchar{+}}19]{MQRVW19:program}
Lucas Meyers, Robert Quarles, Brandon Roberts, David~Shea Vela-Vick, and
  C.-M.~Michael Wong, \emph{transverse-hfk-revision}, available at
  \url{https://github.com/albenzo/transverse-hfk-revision/}, 2019, accessed on
  Jun 7, 2019.

\bibitem[NOT07]{NOT07:program}
Lenhard Ng, Peter Ozsv{{\'a}}th, and Dylan Thurston,
  \emph{Transverse{H}{F}{K}.c}, available at
  \url{https://services.math.duke.edu/~ng/math/TransverseHFK.c}, 2007, accessed
  on Feb 8, 2019.

\bibitem[NOT08]{NOT08}
Lenhard Ng, Peter Ozsv\'{a}th, and Dylan Thurston, \emph{Transverse knots
  distinguished by knot {F}loer homology}, J. Symplectic Geom. \textbf{6}
  (2008), no.~4, 461--490. \MR{2471100}

\bibitem[OSSz15]{OSS15}
Peter~S. Ozsv{\'a}th, Andr{\'a}s~I. Stipsicz, and Zolt{\'a}n Szab{\'o},
  \emph{Grid homology for knots and links}, Mathematical Surveys and
  Monographs, vol. 208, American Mathematical Society, Providence, RI, 2015.
  \MR{3381987}

\bibitem[OSzT08]{OST08}
Peter Ozsv\'{a}th, Zolt\'{a}n Szab\'{o}, and Dylan Thurston, \emph{Legendrian
  knots, transverse knots and combinatorial {F}loer homology}, Geom. Topol.
  \textbf{12} (2008), no.~2, 941--980. \MR{2403802}

\bibitem[Pan17]{Pan17}
Yu~Pan, \emph{The augmentation category map induced by exact {L}agrangian
  cobordisms}, Algebr. Geom. Topol. \textbf{17} (2017), no.~3, 1813--1870.
  \MR{3677941}

\bibitem[Sau04]{Sauv04}
Denis Sauvaget, \emph{Curiosités lagrangiennes en dimension 4}, Annales de
  l’institut Fourier \textbf{54} (2004), no.~6, 1997--2020 (fre).

\bibitem[ST13]{ST13}
Joshua~M. Sabloff and Lisa Traynor, \emph{Obstructions to {L}agrangian
  cobordisms between {L}egendrians via generating families}, Algebr. Geom.
  Topol. \textbf{13} (2013), no.~5, 2733--2797. \MR{3116302}

\bibitem[Sta18]{Sta22}
The {S}tacks~project contributors, \emph{The {S}tacks project}, available at
  \url{https://stacks.math.columbia.edu}, 2018, accessed on Jan 17, 2023.

\bibitem[Wei94]{Wei94}
Charles~A. Weibel, \emph{An introduction to homological algebra}, Cambridge
  Studies in Advanced Mathematics, vol.~38, Cambridge University Press,
  Cambridge, 1994. \MR{1269324}

\bibitem[Won17]{Won17}
C.-M.~Michael Wong, \emph{Grid diagrams and {M}anolescu's unoriented skein
  exact triangle for knot {F}loer homology}, Algebr. Geom. Topol. \textbf{17}
  (2017), no.~3, 1283--1321. \MR{3677929}

\bibitem[Zem19]{Zem19}
Ian Zemke, \emph{Link cobordisms and functoriality in link {F}loer homology},
  J. Topol. \textbf{12} (2019), no.~1, 94--220. \MR{3905679}

\end{thebibliography}

\end{document}